\DeclareSymbolFont{script}{U}{eus}{m}{n}
\DeclareMathSymbol{\Wedge}{0}{script}{"5E}
\newcommand{\mult}{^{\scriptscriptstyle\times}}
\newtheorem{definition}{Definition}
\newtheorem{theorem}{Theorem}
\newtheorem{lemma}{Lemma}
\newtheorem{proposition}{Proposition}
\newtheorem{wniosek}{Corollary}
\theoremstyle{definition}
\newtheorem{remark}{Remark}
\newtheorem{example}{Example}
\newtheorem{observation}{Observation}
\DeclareMathOperator{\id}{id}
\DeclareMathOperator{\SO}{SO}
\DeclareMathOperator{\SL}{SL}
\DeclareMathOperator{\F}{F}
\DeclareMathOperator{\constans}{const}
\DeclareMathOperator{\de}{d}
\DeclareMathOperator{\spanu}{span}
\DeclareMathOperator{\Sgp}{\Sigma^+}
\DeclareMathOperator{\Sgm}{\Sigma^-}
\DeclareMathOperator{\Lap}{\Lambda^+}
\DeclareMathOperator{\Lam}{\Lambda^-}
\DeclareMathOperator{\ot}{\otimes}
\title{Twistor construction of asymptotically hyperbolic Einstein--Weyl spaces}
\author{Aleksandra Bor\'owka}
\email{aleksandra.borowka@uj.edu.pl}
\address{Jagiellonian University in Krakow, Institute of Mathematics, ul. prof. Stanis\l{}awa \L{}ojasiewicza 6,
30-348 Krak\'ow, +48126647627 }
\begin{document}

\begin{abstract}
Starting from a real analytic conformal Cartan connection on a real analytic surface $S$, we construct a complex surface $T$ containing a family of pairs of projective lines. Using the structure on $S$ we also construct a complex $3$-space $Z$, such that $Z$ is a twistor space of a self-dual conformal $4$-fold and $T$ is a quotient of $Z$ by a holomorphic local $\mathbb{C}^{\mult}$ action. We prove that $T$ is a minitwistor space of an asymptotically hyperbolic Einstein-Weyl space with $S$ as an asymptotic boundary.
\end{abstract}
\maketitle
\section{Introduction}

An Einstein-Weyl manifold $(B,c,\mathcal{D})$ is a conformal manifold $(B,c)$ whose symmetric trace-free part of the Ricci tensor of $\mathcal{D}$ vanishes (see for example \cite{CaPe1}). N. Hitchin (\cite{Hit}) developed minitwistor theory for $3$-dimensional Einstein-Weyl manifolds and P. Jones and K. Tod (\cite{JT}) showed that quotients of twistor spaces of self-dual conformal $4$-manifolds by a well-behaving holomorphic actions are minitwistor spaces.

C. LeBrun (\cite{Le}) stated a definition of asymptotically hyperbolic Einstein-Weyl spaces and, using similar techniques as P. Jones and K. Tod, he showed that the quotient of a self-dual conformal $4$-manifold by a well-behaving $S^1$ action is an asymptotically hyperbolic Einstein-Weyl manifold. He also stated a question if all asymptotically hyperbolic Einstein-Weyl manifolds (in the global sense) are in fact hyperbolic. 

The main result of this paper is a construction, which from a real analytic conformal Cartan connection on a real analytic surface $S$ produces a twistor space $T$ of an asymptotically hyperbolic Einstein-Weyl manifold $B$ with $S$ as an asymptotic boundary. We also give a natural construction of a twistor space $Z$ of a self-dual conformal $4$-manifold $M$ such that $B$ is the quotient of $M$ by a conformal semi-free $S^1$ action.

Section \ref{s1} contains necessary background. Firstly, we introduce twistor theory for self-dual conformal $4$-manifolds and Einstein-Weyl $3$-manifolds. Then we discuss briefly asymptotically hyperbolic Einstein-Weyl spaces and conformal Cartan connections. Finally we recall properties of complexifications. 

In Section \ref{s2} we discuss a motivating example for the construction, namely the quadric in $\mathbb{CP}^3$. 
In Section~\ref{s2}, starting from a complexification $\Sigma$ of a real-analytic surface $S$ with a real-analytic Cartan geometry, we construct a complex surface $T$ containing family of line pairs parametrised by points in $\Sigma$. 
In Section \ref{s3}, using the same data, we give a natural construction of a $3$-dimensional complex manifold $Z$ such that there is a projection $\Pi$ from an open subset of $Z$ to $T$. 

In Section \ref{s4}, we prove that the manifold $Z$ admits a family of projective lines with normal bundle $\mathcal{O}(1)\oplus\mathcal{O}(1)$. We also show that $Z$ and $T$ admit compatible real structures and hence conclude that $Z$ is a twistor space of a self-dual conformal $4$-manifold $M$. Next, we show that the projection from $Z$ to $T$ is given by a holomorphic $\mathbb{C}^{\mult}$ action which induces a semi-free $S^1$ action on $M$. We prove that the quotient of some twistor lines gives minitwistor lines on $T$ and therefore conclude that $T$ is a minitwistor space of an Einstein-Weyl manifold $B$. Applying a result of LeBrun (\cite{Le}), we conclude that $B$ is asymptotically hyperbolic.

As we use twistor methods for our construction, the results we obtain are local in nature. In particular the constructed manifolds are not counterexamples for the LeBrun's conjecture.

\section{Background} \label{s1}
\subsection*{Twistor theory}
Twistor theory introduced by R. Penrose in 1965 (\cite{PR}) provides a correspondence between various classes of real smooth manifolds (e.g. self-dual conformal $4$-manifolds (\cite{AtHS}), Einstein-Weyl $3$-manifolds (\cite{Hit}), surfaces with projective structure \cite{HitP}) with complex manifolds containing special families of projective lines. 

Recall that $\mathcal{O}(-1)$ is the tautological bundle over $\mathbb{CP}^n$, $\mathcal{O}(1)=\mathcal{O}(-1)^*$ and $\mathcal{O}(k)=\mathcal{O}(1)^{\otimes k}$. We will use the following two theorems.
\begin{theorem}[Penrose \cite{PR}, Atiyah, Hitchin, Singer \cite{AtHS}]\label{TPenrose}
Let $Z$ be a complex $3$-manifold such that:
\begin{itemize}
\item[(i)]There is a family of non-singular holomorphic projective lines $\mathbb{C}\mathbb{P}^1$ each with normal bundle isomorphic to $\mathcal{O}(1)\oplus\mathcal{O}(1)$,
\item[(ii)]$Z$ has a real structure which on lines from the family which are invariant under this real structure induces the antipodal map of $\mathbb{C}\mathbb{P}^1$.
\end{itemize}
Then the parameter space of projective lines invariant under the real structure is a self-dual conformal $4$-manifold for which $Z$ is the twistor space.
\end{theorem}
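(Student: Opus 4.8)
The plan is to realise the parameter space $\mathcal{M}$ of the lines as a complex $4$-manifold via Kodaira deformation theory, equip it with a holomorphic conformal structure read off from the normal bundle, deduce self-duality from the integrability of the null $2$-planes cut out by the points of each line, and finally pass to the real locus fixed by the real structure. First I would apply Kodaira's theorem on deformations of compact complex submanifolds to a line $L\cong\mathbb{CP}^1\subset Z$ with $N_L\cong\mathcal{O}(1)\oplus\mathcal{O}(1)$. Since $H^1(L,N_L)=H^1(\mathbb{CP}^1,\mathcal{O}(1)\oplus\mathcal{O}(1))=0$, deformations are unobstructed, so the lines of the family sweep out a smooth complex moduli space $\mathcal{M}$ with a canonical identification $T_{[L]}\mathcal{M}\cong H^0(L,N_L)$; as $\dim H^0(\mathbb{CP}^1,\mathcal{O}(1)\oplus\mathcal{O}(1))=4$, we obtain $\dim_{\mathbb{C}}\mathcal{M}=4$.

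Next I would build the conformal structure. Writing $U:=H^0(\mathbb{CP}^1,\mathcal{O}(1))$, a $2$-dimensional spin space, there is a natural isomorphism $T_{[L]}\mathcal{M}\cong H^0(L,N_L)\cong U\otimes W$ with $W\cong\mathbb{C}^2$. I declare a tangent vector to be null exactly when the corresponding section of $N_L$ vanishes somewhere on $L$, equivalently when the associated element of $U\otimes W$ is decomposable. The vanishing of the resulting $2\times2$ determinant is a non-degenerate quadratic form, defined only up to scale because of the freedom in trivialising $\mathcal{O}(1)$ and $W$, and this furnishes a well-defined holomorphic conformal structure $[g]$ on $\mathcal{M}$.

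For self-duality I would exploit the incidence relation. Each point $z\in Z$ determines the set $\widehat{z}$ of lines through it; a dimension count (the incidence variety is $5$-dimensional and fibres over the $3$-dimensional $Z$) shows $\widehat{z}$ is a $2$-dimensional submanifold of $\mathcal{M}$. Its tangent plane at $[L]$ consists of the normal sections vanishing at $z$, namely $t_z\otimes W$, where $t_z\in U$ is the section of $\mathcal{O}(1)$ vanishing at $z$; this plane is decomposable, hence totally null. As $z$ ranges over $L\cong\mathbb{CP}^1$ these planes sweep out precisely one of the two families of null $2$-planes (the self-dual $\alpha$-planes) through $[L]$, and the surfaces $\widehat{z}$ are genuine integral surfaces of the corresponding distribution. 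The existence of such integral surfaces for the whole $\mathbb{CP}^1$-family of $\alpha$-planes is equivalent to the vanishing of the relevant half of the Weyl curvature, so $[g]$ is self-dual, and the double fibration $Z\leftarrow\mathcal{F}\rightarrow\mathcal{M}$ exhibits $Z$ as the space of $\alpha$-surfaces.

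Finally, the antiholomorphic involution $\sigma$ on $Z$ acts on $\mathcal{M}$, and its fixed-point set, the $\sigma$-invariant lines, is a real $4$-dimensional submanifold $M\subset\mathcal{M}$ to which $[g]$ restricts as a real conformal structure; restricting the double fibration gives $Z$ as the twistor space of $M$. The main obstacle is the self-duality step: converting the holomorphic fact that the $\alpha$-planes integrate (because the surfaces $\widehat{z}$ exist) into the curvature statement that the appropriate part of the Weyl tensor vanishes, while keeping orientation conventions straight so that one lands on self-duality rather than anti-self-duality. The other delicate point is signature: because $\sigma$ induces the fixed-point-free antipodal map on each invariant line, one must check that $M$ meets every null cone only at the origin, which forces the restricted conformal structure to be positive-definite rather than split, and this is exactly where hypothesis (ii) is used.
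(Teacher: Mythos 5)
This theorem is stated in the paper purely as background and is not proved there --- the paper defers entirely to Penrose \cite{PR} and Atiyah--Hitchin--Singer \cite{AtHS} --- so there is no internal proof to compare your attempt against. Your proposal correctly reproduces the canonical argument from those references: Kodaira deformation theory (unobstructed because $H^1(\mathbb{CP}^1,\mathcal{O}(1)\oplus\mathcal{O}(1))=0$) gives the smooth complex $4$-dimensional moduli space with $T_{[L]}\mathcal{M}\cong H^0(L,N_L)\cong U\otimes W$; declaring a vector null when its normal section has a zero (equivalently, is decomposable) yields the holomorphic conformal structure; the incidence surfaces $\widehat{z}$ are totally null integral surfaces whose existence through every null plane of one family characterises self-duality; and the antipodal hypothesis (ii) rules out real zeros of invariant null sections, which is exactly what forces the real slice to carry a positive-definite conformal structure. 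The one step you leave unexpanded --- converting the existence of the $\alpha$-surfaces into the vanishing of the appropriate half of the Weyl curvature --- is the computational core of the theorem, but it is a standard equivalence (Penrose's integrability criterion), so citing it closes the argument; as an outline, your proof is sound and follows the same route as the cited sources.
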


\begin{theorem}[Hitchin, \cite{Hit}]\label{THitchin}
Let $T$ be a surface such that:
\begin{itemize}
\item[(i)]There is a family of non-singular holomorphic projective lines $\mathbb{C}\mathbb{P}^1$ each with normal bundle isomorphic to $\mathcal{O}(2)$,
\item[(ii)]$T$ has a real structure which on lines from the family which are invariant under this real structure induces the antipodal map of $\mathbb{C}\mathbb{P}^1$.
\end{itemize}
Then the parameter space of projective lines invariant under the real structure is an Einstein--Weyl manifold.
\end{theorem}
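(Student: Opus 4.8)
The plan is to apply Kodaira's deformation theory to the given family, read off a conformal structure and a Weyl connection directly from the incidence geometry of the lines, and finally restrict everything to the real slice. Throughout I work first with the full holomorphic family and only impose reality at the end.

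First I would complexify. Since each line $L\cong\mathbb{C}\mathbb{P}^1$ has normal bundle $\mathcal{O}(2)$ with $H^1(L,\mathcal{O}(2))=0$, Kodaira's theorem guarantees that the family is unobstructed and that the (complexified) parameter space $W$ is a smooth complex $3$-manifold carrying a canonical isomorphism $T_{[L]}W\cong H^0(L,\mathcal{O}(2))$. A nonzero element of $H^0(L,\mathcal{O}(2))$ is a binary quadratic form on $L$, and its discriminant is a nondegenerate quadratic form on the $3$-dimensional space $H^0(L,\mathcal{O}(2))$, well defined up to scale. Transporting this through the Kodaira isomorphism yields a holomorphic conformal structure $c$ on $W$ whose null cone at $[L]$ consists precisely of those infinitesimal deformations vanishing to second order at some point of $L$, i.e.\ the nearby lines meeting $L$ tangentially. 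Nondegeneracy is immediate from the discriminant being a nondegenerate quadratic in the coefficients.

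Next I would produce the Weyl connection from the lines themselves. Fix $x\in T$ and consider the lines through $x$; their tangent space at $[L]$ is $H^0(L,\mathcal{O}(2)\otimes\mathcal{I}_x)\cong H^0(\mathcal{O}(1))$, a $2$-plane tangent to the null cone along a single null line and hence a null $2$-plane for $c$. These lines sweep out a surface $\Sigma_x\subset W$, and as $x$ ranges over the $2$-dimensional $T$ one obtains a $2$-parameter family of totally null surfaces, exactly one passing through each point in each null $2$-plane direction. The decisive step is to show that this family is totally geodesic for a unique torsion-free connection $\mathcal{D}$ compatible with $c$, and that the existence of such a foliation by totally geodesic null surfaces is equivalent to the vanishing of the symmetric trace-free part of the Ricci tensor of $\mathcal{D}$. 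I would construct $\mathcal{D}$ by prolonging the requirement that each $\Sigma_x$ be geodesic, verifying that torsion-freeness and conformality pin it down uniquely, and that the integrability already guaranteed by the existence of the $2$-parameter family is exactly the Einstein-Weyl equation (this is the Cartan--Hitchin characterisation). This is the main obstacle, since it requires translating the holomorphic incidence data into the symmetrised-Ricci condition rather than performing a routine computation.

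Finally I would impose reality. The real structure on $T$ is an antiholomorphic involution under which the invariant lines carry the antipodal map; the real parameter space $B$ is its fixed locus, a real $3$-manifold, and the reality condition on $H^0(\mathcal{O}(2))$ forces $c$ to restrict to a genuine definite conformal structure on $B$. Because $\mathcal{D}$ is built canonically from the holomorphic incidence data, it commutes with the involution and descends to a real Weyl connection, while the Einstein-Weyl equation, being a real-analytic identity valid on all of $W$, restricts to $B$. Hence $(B,c,\mathcal{D})$ is Einstein-Weyl, as claimed.
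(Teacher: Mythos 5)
The paper states this theorem purely as background, quoting it from Hitchin \cite{Hit} without proof, so the only meaningful comparison is with Hitchin's original argument, which your sketch reconstructs faithfully: Kodaira deformation theory (unobstructed since $H^1(L,\mathcal{O}(2))=0$) giving the $3$-dimensional moduli space with $T_{[L]}W\cong H^0(L,\mathcal{O}(2))$, the conformal structure from the discriminant of binary quadratics, the null surfaces swept out by lines through a fixed point of $T$, the Cartan--Hitchin characterisation of Einstein--Weyl structures via $2$-parameter families of totally geodesic null surfaces, and definiteness on the real slice from the antipodal map excluding real double roots. Your outline is correct and is essentially the argument of the cited source; the one place where it remains a plan rather than a proof is the step you yourself flag --- constructing the Weyl connection from the totally geodesic requirement and identifying the integrability of the family with the vanishing of the symmetric trace-free Ricci tensor --- which is exactly the technical core of Hitchin's paper.
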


These two twistor-type construction have been connected by P. Jones and K. Tod (\cite{JT}). They have shown that the quotient of any twistor space $Z$ by a well-behaving holomorphic action is a minitwistor space $T$. Well-behaving holomorphic actions on twistor spaces correspond to conformal actions on the corresponding self-dual conformal $4$ manifolds and the Einstein-Weyl space arising from $T$ is the quotient of self-dual conformal $4$-manifold arising from $Z$ by the conformal action.

\subsection*{Asymptotically hyperbolic Einstein-Weyl spaces}
Recall that hyperbolic $3$-manifolds are Riemannian $3$-manifolds of constant sectional curvature equal to $-1$. They are basic examples of Einstein and thus Einstein--Weyl manifolds. C. LeBrun in \cite{Le} stated the definition which generalises this notion. 
\begin{definition}[\cite{Le}]\label{defashyp}
An Einstein--Weyl $3$-manifold $(B,c,\mathcal{D})$ is said to be asymptotically hyperbolic if there exist:
\begin{itemize}
\item a connected Riemannian 3-manifold $(Y,\tilde{h})$ with a boundary $\partial Y$,
\item a defining function of $\partial Y$, i.e., a function $f:Y\rightarrow \mathbb{R}^+\cup\{0\}$ such that $f|_{\partial Y}=0$ and $(\de f)|_{\partial Y}$ is non-vanishing,
\item a smooth $1$-form $\omega$ on $Y$ vanishing along $\partial Y$,
\end{itemize}
such that $(B,c,\mathcal{D})$ is isomorphic to $(Y-\partial Y)$ equipped with the Weyl structure given by the conformal class of $[\tilde{h}]$ and the Weyl connection given by $(f^{-2}\tilde{h},\omega)$.
\end{definition}
Recall that a semi-free action of a group $G$ is such that the stabiliser of every point is either trivial or the whole $G$. Later we will use the following lemma.

\begin{lemma}[\cite{Le}]\label{lemmaLe}
Suppose that $(M,g)$ is a self-dual manifold with an $S^1$ action which has a non-empty surface $S$ of fixed points, is semi-free and does not have isolated fixed points. Let $B$ be a maximal smooth manifold (without boundary) contained in $Y=M/S^1$ i.e., the smooth point subset of $Y$. Then the Einstein--Weyl structure defined by the Jones--Tod correspondence (\cite{JT}) on $B$ is an asymptotically hyperbolic Einstein--Weyl structure.
 
\end{lemma}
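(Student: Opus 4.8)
The plan is to reduce the statement to an explicit local analysis near the fixed surface $S$ and then to match the resulting data to Definition \ref{defashyp}. First I would fix the global picture of the quotient. Since the $S^1$ action is semi-free, every orbit in $M\setminus S$ is free, so the quotient of $M\setminus S$ is a smooth $3$-manifold on which the Jones--Tod correspondence (\cite{JT}) produces an Einstein--Weyl structure; this is the interior smooth set $B$. Because the action has no isolated fixed points, every fixed point lies on the surface $S$, and the only non-smooth locus of $Y=M/S^1$ is the image of $S$. I would then show that near $S$ the quotient is a manifold with boundary with $\partial Y$ identified with $S$: by the slice theorem the action linearises near $S$, the normal bundle of $S$ in $M$ is a rank-$2$ bundle on which $S^1$ acts, and semi-freeness forces this to be the standard rotation (weight $\pm1$); dividing each normal disc by rotation yields a half-line, so $Y\cong S\times[0,\varepsilon)$ near $S$, with $\partial Y=S$ and $B=Y\setminus\partial Y$.

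Next I would write the Jones--Tod data in a normal form adapted to $S$. Choosing Fermi-type coordinates in which $g=g_S+dr^2+r^2\,d\theta^2+O(r)$ with generating Killing field $K=\partial_\theta$, one has $V:=|K|_g\sim r$, and the horizontal (projected) metric $g_0$ extends smoothly across $\{r=0\}$ to a metric on $Y$. Setting $\tilde h:=g_0$ and $f:=V$, one gets $f|_{\partial Y}=0$ and $\de f\neq0$ on $\partial Y$ (since $V^2=g(K,K)$ vanishes to exact order two along the normal directions), so $f$ is a defining function and $[\tilde h]$ is the Einstein--Weyl conformal class. In this normalisation the Einstein--Weyl representative supplied by Jones--Tod is $f^{-2}\tilde h=V^{-2}g_0$; I would verify on the flat model $M=\mathbb{R}^4$ that this is exactly the upper half-space form $r^{-2}(dr^2+dx_1^2+dx_2^2)$ of hyperbolic $3$-space, which both fixes the conventions and shows that in that case the Weyl form relative to $f^{-2}\tilde h$ vanishes identically.

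It then remains to produce the $1$-form $\omega$ and show that it vanishes on $\partial Y$. Writing the Jones--Tod Weyl connection in the representative $V^{-2}g_0$ yields a $1$-form $\omega$ (differing from the Weyl form in the representative $g_0$ by $-\de\log V$), and I would show, from the rotational normal form together with self-duality, that $\omega$ extends smoothly to $\partial Y$ and vanishes there; comparing with Definition \ref{defashyp} then finishes the proof.

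The hard part will be exactly this last step: showing genuine vanishing of $\omega$ (equivalently, the appropriate degeneration of the monopole field strength $\de A$) on $\partial Y$, rather than mere boundedness. Self-duality of $g$ and the precise weight-one rotational form of the action near $S$ must both be used to control the $r\to0$ asymptotics of $V^{-2}g_0$ and of $A$; the smoothness of $\tilde h=g_0$ and of $f=V$ up to the boundary, while routine once the normal form is in place, also has to be checked with care, since $V=\sqrt{g(K,K)}$ is a priori only Lipschitz at $r=0$.
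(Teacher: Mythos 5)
First, a point of comparison: the paper does not prove this lemma at all --- it is quoted verbatim from LeBrun \cite{Le} and enters the paper only as an imported ingredient, used at the end to conclude asymptotic hyperbolicity --- so your proposal has to be measured against LeBrun's original argument rather than against anything in this text. Your overall framework is the right one and does follow the same lines as that argument: semi-freeness plus the slice theorem force the normal $S^1$-action along the fixed surface to be the weight-one rotation, so $Y=M/S^1$ is a manifold with boundary $\partial Y\simeq S$ whose interior is $B$; the norm $f=|K|_g$ of the Killing field is the natural candidate for a defining function; and the flat model $\mathbb{R}^4/S^1$ reproduces hyperbolic $3$-space and fixes the conventions. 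Up to that point your outline is correct.

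The genuine gap is the step you yourself flag as ``the hard part'' and then leave undone: proving that the Weyl $1$-form $\omega$ of the Jones--Tod structure, written relative to the representative $f^{-2}\tilde h$, extends to $\partial Y$ and vanishes there. This is not a technical afterthought; it is the entire content of the lemma. Everything you actually establish --- the boundary structure of $Y$, the defining function, the extension of the conformal class $[\tilde h]=[g_0]$ --- uses only that the action is isometric and semi-free, and would be equally true for a circle action on a $4$-manifold that is not self-dual, in which generality the conclusion is vacuous (there is no Einstein--Weyl structure at all, let alone an asymptotically hyperbolic one). Self-duality enters precisely through the Jones--Tod monopole equation coupling $\de A$ to $\de V$ and $\omega$, and a complete proof must use it to show two things: (i) the divergent term proportional to $\de\log V\sim\de\log r$ in the Weyl form of the natural quotient representative is exactly cancelled by the conformal change to $f^{-2}\tilde h$, and (ii) the remaining term, essentially $V^{-1}\!*\!\de A$, tends to zero as $r\to 0$. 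Your outline names these ingredients but contains no computation establishing (i) or (ii). A secondary but real issue is the smooth structure on $Y$: invariant smooth functions near $S$ are smooth functions of $r^2$, not of $r$, so the claims that $\tilde h=g_0$ and $f=V$ are smooth up to the boundary with $\de f\neq 0$ on $\partial Y$ hold in one smooth structure on the half-space and fail in the other; you flag this only for $V$, but it must be resolved simultaneously for $\tilde h$, $f$ and $\omega$, since Definition \ref{defashyp} requires all three to live on the same manifold-with-boundary $(Y,\tilde h)$.
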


In \cite{Le} C. LeBrun stated hypothesis that if $Y$ from Definition \ref{defashyp} is compact then the asymptotically hyperbolic manifold $B$ is hyperbolic.

\subsection*{Conformal Cartan connection}
Conformal Cartan geometry (like other parabolic geometries; see \cite{CSch}, \cite{CS2}) can be defined using tractor bundles (see \cite{Est}, \cite{CaBu2}). We will use this approach in the paper.

\begin{definition} \label{CartanCon}
A conformal Cartan geometry on an $n$-manifold $S$ is a quadruple $(V,\langle,\rangle,\Lambda,\mathcal{D})$ where:
\begin{itemize}
\item $V$ is a rank $n+2$ vector bundle with inner product $\langle,\rangle$ over $S$,
\item $\Lambda\subset V$ is a null line subbundle over $S$,
\item $\mathcal{D}$ is a linear metric connection satisfying the Cartan condition, that is $\delta:=\mathcal{D}\mid_{\Lambda} \mod \Lambda$ is an isomorphism from $TS\otimes\Lambda$ to $\Lambda^{\perp}/\Lambda$.
\end{itemize}
In the real Riemannian case we require that $\langle\cdot,\cdot\rangle$ has signature $(n+1,1)$.
A complex conformal Cartan geometry on a complex $n$-manifold is a quadruple $(V,\langle,\rangle,\Lambda,\mathcal{D}^S)$ as above, where all objects are holomorphic and we take $\langle,\rangle$ to be bilinear rather than hermitian.
\end{definition}  
\subsection*{Complexification}
A real structure $\theta$ on a complex manifold $S$ is an anti-holomorphic involution, i.e., an anti-holomorphic map $\theta: \ S\rightarrow S$ such that $\theta^2=\id$. The set of fixed points $S^{\theta}=\{x\in S\ : \ \theta{x}=x\}$ is a real analytic submanifold.

\begin{definition}\label{tools}
A complexification $(S^c,\theta)$ of a real $m$-dimensional manifold $S$ is a complex manifold $S^c$ of complex dimension $m$ together with a real structure $\theta$ on $S^c$ with the fixed point set $(S^c)^{\theta}\simeq S$.
\end{definition}

For any real-analytic manifold $S$ we can construct a complexification $S^c$ of $S$ using holomorphic extensions of real-analytic coordinates on $S$. Indeed, as transition functions are given by real-analytic functions, we can locally holomorphically extend them to obtain a holomorphic atlas and the real structure $\theta$ is given by complex conjugation. Clearly $S\equiv (S^c)^{\theta}$ and we denote $(S^c)^{\theta}$ by $S_{\mathbb{R}}$. Conversely, if $S^c$ is a complexification then by local uniqueness near the submanifold $S$, the transition functions of $S^c$ must coincide with holomorphic extensions of transition functions from $S$. Hence, by local uniqueness of holomorphic extensions, any two complexifications of $S$ are isomorphic in some neighbourhood of $S_{\mathbb{R}}$.

\section{Motivating Example} \label{s2}
In this section, by $\mathbb{CP}^n$ we denote the space $\mathbb{P}(\mathbb{C}^{n+1})$ and by $(\mathbb{CP}^n)^*$ we denote the space $\mathbb{P}((\mathbb{C}^{n+1})^*)$.
We will discuss now a motivating example for the construction of the twistor space of an asymptotically hyperbolic Einstein--Weyl manifold from a 
$2$-manifold equipped with a conformal Cartan connection.

Firstly, we will discuss some geometric properties of the space $\mathbb{CP}^1\times\mathbb{CP}^1$. 

\begin{example}
Let $T\cong(\mathbb{CP}^1)^*\times (\mathbb{CP}^1)^*$ be a quadric in $(\mathbb{CP}^3)^*$.
On $T$ we have two families of lines with normal bundle $\mathcal{O}$ called $\alpha$-lines and $\beta$-lines. $\alpha$-lines do not intersect each other, $\beta$-lines do not intersect each other and any $\alpha$-line intersects any $\beta$-line in exactly one point. Moreover, any point in $T$ belongs to exactly one $\alpha$-line and one $\beta$-line. Thus, we can consider a family of line pairs parametrised by points in $T$. Now, note that the projective tangent space to $T$ at a point $t$ intersects $T$ in the line pair intersecting at the point $t$. Thus, any line pair is determined by the intersection of the tangent space through their intersecting point with $T$. Thus, the moduli space of line pairs of $T$ coincides with the space $T^*\subseteq(\mathbb{CP}^3)$ of tangent spaces to $T$. This is a quadric in $(\mathbb{CP}^3)$. 

The space of deformations of hyperplanes tangent to $T$ is the whole space $\mathbb{CP}^3$ viewed as the space of all hyperplanes in $(\mathbb{CP}^3)^*$. By the adjunction formula the hyperplanes which are non-tangent to $T$ intersect $T$ in curves which have normal bundles isomorphic to $\mathcal{O}(2)$.
\end{example}

\begin{remark}
In the above example we constructed $\mathcal{O}(2)$-curves as 'deformations' of the line pairs. Note that as the line pairs are singular, to be able to obtain non-singular deformations we had to consider non-singular manifolds uniquely determining line pairs. Then we deformed the non-singular manifolds to obtain a non-singular manifold determining a non-singular twistor line. In the general construction we will have a similar situation; to be able to construct non-singular twistor lines we will have to construct some non-singular surfaces determining line pairs.
\end{remark}

Now we will interpret the above correspondence using a conformal Cartan geometry on a quadric $\Sigma$.

Let $\Sigma=\{[z]\in \mathbb{C}\mathbb{P}^3\ | \ z_0z_1+z_2z_3=0\}$ with the following complex Cartan geometry $(V,\langle ,\rangle ,\Lambda,\mathcal{D})$ (see definition $\ref{CartanCon}$):
\begin{itemize}
\item $V=\Sigma\times \mathbb{C}^{4}$ is a rank $4$ trivial vector bundle,
\item $\langle x,y\rangle =x_0y_1+x_1y_0+x_2y_3+x_3y_2$ is an inner product on $\mathbb{C}^{4}$,
\item $\Lambda:=\{([z],w)\in V\ | \ w=\alpha z$ for some $\alpha\in\mathbb{C}\}$ is a rank $1$ null subbundle of $V$,
\item $\mathcal{D}$ is the standard flat connection on the trivial bundle $V$.
\end{itemize}
Note that $\Sigma$ is isomorphic to $\mathbb{CP}^1\times\mathbb{CP}^1$ and the isomorphism $\Upsilon$ is given by
\begin{align*}    
\Upsilon \colon \mathbb{CP}^1\times\mathbb{CP}^1 &\longrightarrow \Sigma\subseteq\mathbb{CP}^3\\
 ([u_0,u_1],[v_0,v_1])&\mapsto [z]=[-u_0v_0,u_1v_1,u_0v_1,u_1v_0]
\end{align*}
and the complex Cartan connection $(V,\langle ,\rangle ,\Lambda,\mathcal{D})$ is a complexification of a (real) Cartan connection on the real $2$-sphere $S^2$.

Observe that for any $[z]=\Upsilon([u_0,u_1],[v_0,v_1])$, the map $\Upsilon$ defines a pair of lines by 
$$\alpha_{[z]}:=\Upsilon([u_0,u_1],[x,y])=\{[-u_0x,u_1y,u_0y,u_1x]\in\Sigma,\ : [x,y]\in\mathbb{CP}^1\},$$
$$\beta_{[z]}:=\Upsilon([x,y],[v_0,v_1])=\{[-xv_0,yv_1,xv_1,yv_0]\in\Sigma,\ : [x,y]\in\mathbb{CP}^1\}.$$
We have that
 $$\Lambda_{[z]}^{0}=\{a\in(\mathbb{C}^{4})^*\ :\ a_1(z_0)+a_0(z_1)+a_3(z_2)+a_2(z_3)=0\},$$ where $\Lambda^{0}\subset V^*$ is the annihilator of $\Lambda$. Furthermore, we have that $\langle\alpha_{[z]},\cdot\rangle\subseteq\mathbb{P}(\Lambda_{[z]}^{0})$ and $\langle\beta_{[z]},\cdot\rangle\subseteq\mathbb{P}(\Lambda_{[z]}^{0})$.
The induced degenerated inner product on $\Lambda^{0}_{[z]}$ defines two null planes $U^+_{[z]}\subseteq\Lambda^{0}$ and $U^-_{[z]}\subseteq\Lambda^{0}$. They are given by the equation 
$$0=\langle a,a\rangle=-a_0a_1+a_2a_3,$$
for $a\in\Lambda^{0}_{[z]}.$ 
Note that the null planes satisfy $\mathbb{P}(U^+_{[z]})=\langle\alpha_{[z]},\cdot\rangle$ and $\mathbb{P}(U^-_{[z]})=\langle\beta_{[z]},\cdot\rangle$.

We define the following fibre bundles over $\Sigma$:
\begin{itemize}
\item $F^+$ to be a bundle of projective lines defined fibrewise by $F^+_{[z]}:=\mathbb{P}(U^+_{[z]})$,
\item $F^-$ to be a bundle of projective lines defined fibrewise by $F^-_{[z]}:=\mathbb{P}(U^-_{[z]})$
\item $F$ to be a bundle of pairs of projective lines defined fibrewise by $F_{[z]}:=F^+_{[z]}\cup F^-_{[z]}.$
\end{itemize}
 As for any $[z]\in \Sigma$ we have that $F_{[z]}\subseteq(\mathbb{CP}^3)^*$, we can consider the space $T=\bigcup_{[z]}F_{[z]}$. Straightforward calculations show that $T$ is a quadric in $(\mathbb{CP}^3)^*$ dual to $\Sigma$.

As a result we have the following diagram
$$ \begin{diagram}
\dgARROWLENGTH=3em
  \node[4]{F\subset\Sigma\times\Sigma^*}\arrow[2]{sw,t}{\pi}\arrow[2]{se,t}{\tilde{\pi}}\\
[2]\node[2]{\Sigma}\node[4]{T}
\end{diagram}. $$
By definition, the fibre of $F$ over $[z]\in\Sigma$ is a pair of lines dual to the line pair through $[z]$, hence $$F=\{([z],\langle [a],\cdot\rangle )\in \Sigma\times\Sigma^*\ : \ [a]\in (\alpha_{[z]}\cup\beta_{[z]})\}.$$
We would like to understand what are fibres of $F$ over $T$.

Observe that on $\Sigma$ we have two families of null curves, namely the $\alpha$ lines and the $\beta$ lines. Denote by $t^+$ the distribution of the tangent bundle defining the foliation by $\alpha$-lines and by $t^-$ the distribution defining the foliation by $\beta$-lines. As we will show in Proposition \ref{lifts}, we can horizontally lift $\alpha$-lines to $F^+$ and $\beta$ lines to $F^-$.

Note that $F^+_{[z]}\cap F^-_{[z]}=\mathbb{P}(\Lambda^*)_{[z]}$ is a point for any $[z]\in \Sigma$. 
\begin{proposition}
$T$ is isomorphic to the leaf space of the foliation of $F$ by horizontal lifts of null curves. The fibres of $F$ over $T$ are pairs of null lines.
\end{proposition}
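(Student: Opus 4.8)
The plan is to exploit that $\mathcal{D}$ is the standard flat connection on the trivial bundle $V=\Sigma\times\mathbb{C}^{4}$, so that the induced connection on $\mathbb{P}(V^*)$ is again flat and horizontal transport is the identity on the fibre $(\mathbb{C}^{4})^*$. Consequently a horizontal lift of a curve $c$ in $\Sigma$ to $F\subset\Sigma\times\Sigma^*$ is simply a curve $t\mapsto(c(t),[a])$ with the second coordinate $[a]\in\Sigma^*$ held constant. In particular the projection $\tilde\pi\colon F\subset\Sigma\times\Sigma^*\to T=\Sigma^*$ is constant on every such lift, hence constant on the leaves of the foliation by horizontal lifts of null curves. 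Thus $\tilde\pi$ descends to a holomorphic map $\overline{\tilde\pi}$ from the leaf space to $T$, and the whole statement reduces to proving that $\overline{\tilde\pi}$ is a biholomorphism while identifying the fibres $\tilde\pi^{-1}([a])$.

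First I would make the leaves explicit. As remarked above, the horizontal lift of an $\alpha$-line lies in $F^+$ and that of a $\beta$-line lies in $F^-$: indeed $F^+_{[z]}=\langle\alpha_{[z]},\cdot\rangle$ depends only on the $\alpha$-line through $[z]$ (equivalently only on the $[u]$-factor of $[z]$ under $\Upsilon$) and so is constant along that line, while flat transport keeps $[a]$ fixed; symmetrically $F^-_{[z]}$ is constant along $\beta$-lines. Hence the leaf through a point of $F^+$ is $\{([z],[a]):[z]\in\alpha_{[z_0]}\}$ for a fixed $[a]\in F^+_{[z_0]}$, and likewise inside $F^-$; on each such leaf $\tilde\pi$ takes the single value $[a]$.

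To prove $\overline{\tilde\pi}$ bijective I would compute $\tilde\pi^{-1}([a])$ directly. Using $\langle,\rangle$ to identify $V$ with $V^*$, write $[\hat a]\in\Sigma$ for the point dual to $[a]\in T=\Sigma^*$. Then $[a]\in F^+_{[z]}=\langle\alpha_{[z]},\cdot\rangle$ holds exactly when $[\hat a]$ lies on the $\alpha$-line through $[z]$, i.e. when $[z]\in\alpha_{[\hat a]}$, and similarly $[a]\in F^-_{[z]}$ exactly when $[z]\in\beta_{[\hat a]}$. Therefore
$$\tilde\pi^{-1}([a])=\bigl(\alpha_{[\hat a]}\times\{[a]\}\bigr)\cup\bigl(\beta_{[\hat a]}\times\{[a]\}\bigr),$$
a pair of lines meeting only at $([\hat a],[a])$. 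Since $\alpha_{[\hat a]}$ and $\beta_{[\hat a]}$ are null curves, this already yields the second assertion, that the fibres of $F$ over $T$ are pairs of null lines. Moreover one checks that the meeting point $([\hat a],[a])$ is exactly the gluing point $\mathbb{P}(\Lambda^*)_{[\hat a]}=[a]$, so the two components are precisely the horizontal $\alpha$-lift in $F^+$ and the horizontal $\beta$-lift in $F^-$ through that point, joined along $F^+\cap F^-$. Hence each fibre is a single leaf, $\overline{\tilde\pi}$ is a bijection, and it is a biholomorphism because both it and its inverse are holomorphic by the explicit description above together with the isomorphism $\Sigma\cong T$ furnished by $\langle,\rangle$.

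The step I expect to require the most care is the behaviour along the gluing locus $F^+\cap F^-=\mathbb{P}(\Lambda^*)$, where $F$ is singular: I must verify that the $\alpha$-lift in $F^+$ and the $\beta$-lift in $F^-$ through a common point genuinely constitute one leaf of the foliation and together exhaust the corresponding fibre of $\tilde\pi$, rather than giving two separate leaves. Away from this locus $F$ is the disjoint union of the two smooth $\mathbb{P}^1$-bundles $F^\pm$ and the foliation, the leaf space, and the map $\overline{\tilde\pi}$ are all routine; the essential content is thus the correct interpretation of the foliation across the singular section $\mathbb{P}(\Lambda^*)$.
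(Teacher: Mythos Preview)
Your argument is correct and follows essentially the same route as the paper: both proofs identify the leaf space with $T=\Sigma^*$ by observing that a horizontal lift of a null line pair is determined by a single point of $\Sigma^*$ (the paper phrases this as the intersection with the section $\mathbb{P}(\Lambda^*)\cong\Sigma^*$, you phrase it as the constant value of $\tilde\pi$, which is equivalent since $\tilde\pi|_{\mathbb{P}(\Lambda^*)}$ is the duality isomorphism). Your version is more explicit about the flatness of $\mathcal{D}$ and the computation of the fibres, and your concern about the gluing locus is exactly the point the paper handles implicitly by declaring a leaf to be the lift of a full line \emph{pair}; once that convention is adopted, there is nothing further to check.
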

\begin{proof}
Any horizontal lift of a line pair is uniquely given by the point in which both lines intersect $\mathbb{P}(\Lambda^*)\cong \Sigma^*$. Choose $b\in T$. It is given by the horizontal lift $b_F$ to $F$ of a line pair $(l_1,l_2)\subset\Sigma$ such that $b$ intersect $\mathbb{P}(\Lambda^*)$ over $[z]\in \Sigma$ and $l_1\cap l_2=[z]$. By definition of the horizontal lift, for any $[a]\in l_1\cup l_2$ there exists a point $b_{[a]}\in b_F\in F$. Hence the fibre of $F$ over $b$ is $l_1\cup l_2$. 
\end{proof}
\begin{remark}
Note that as $T=\Sigma^*$ is a quadric in $(\mathbb{CP}^3)^*$, it also admits two families of lines: they are duals to $\alpha$ and $\beta$-lines from $\Sigma$. Now observe that $\tilde{\pi}(b_F)$ from the above proof is the pair of lines in $T$ which intersect in $b$. Those lines are the dual lines to $l_1$ and $l_2$ and hence the fibre of $F$ over $[b]\in T$ is the pair of lines which are dual to the pair of projective lines intersecting in $[b]$.
\end{remark}
\begin{observation}
We have shown that in the above example the situation is symmetrical: $T$ is dual to $\Sigma$ and the fibres of $F$ over $T$ are duals to fibres of $F$ over $\Sigma$. This symmetry does not exist any more if we proceed with the construction for $U\subsetneq \Sigma$. Then the null curves in $\Sigma$ are pieces of lines but the fibres of $F$ over $\Sigma$ are still pairs of projective lines. As a consequence, we obtain that $T$ is a subspace of quadric in $(\mathbb{CP}^3)^*$ consisting of families of projective lines and $U^*\subsetneq T$. This changes the situation from local (a manifold containing pieces of lines) to global (a manifold containing whole projective lines).
\end{observation}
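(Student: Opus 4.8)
The plan is to trace the asymmetry to the single structural fact that the fibre $F_{[z]}$ is built entirely from the pointwise Cartan data at $[z]$, whereas the null curves $\alpha_{[z]},\beta_{[z]}$ are leaves of the foliations $t^\pm$ and are therefore sensitive to the global extent of the base. First I would record that the null planes $U^+_{[z]},U^-_{[z]}\subset\Lambda^0_{[z]}$ are cut out by the degenerate form $\langle a,a\rangle$ on the single fibre $\Lambda^0_{[z]}$, so their projectivisations $F^\pm_{[z]}=\mathbb{P}(U^\pm_{[z]})$ are complete lines $\mathbb{CP}^1$ no matter how the base is chosen. This yields the second assertion: for every $[z]\in U$ the fibre $F_{[z]}=F^+_{[z]}\cup F^-_{[z]}$ is a pair of honest projective lines meeting at the single point $\mathbb{P}(\Lambda^*)_{[z]}$.

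Next I would make the first assertion precise and exhibit the contrast. Using the identifications $\mathbb{P}(U^+_{[z]})=\langle\alpha_{[z]},\cdot\rangle$ and $\mathbb{P}(U^-_{[z]})=\langle\beta_{[z]},\cdot\rangle$ recalled above, each point of $F^+_{[z]}$ is the tangent hyperplane $\langle a,\cdot\rangle$ to $\Sigma$ at a point $[a]$ of the \emph{full} $\alpha$-line through $[z]$, and likewise for $F^-_{[z]}$ with the $\beta$-line. Thus $F^+_{[z]}$ is the complete dual of the entire projective line $\alpha_{[z]}$, even though, once the base has been cut down to $U$, the null curves in $\Sigma$ themselves appear only as the pieces $\alpha_{[z]}\cap U$ and $\beta_{[z]}\cap U$. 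Here lies the \emph{broken symmetry}: under $\pi$ the fibres $F_{[z]}$ remain complete line pairs while the null curves in the base are truncated to pieces of the corresponding projective lines.

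For the final assertion I would describe $T=\bigcup_{[z]\in U}F_{[z]}$ as a union of complete dual line pairs inside the dual quadric $\Sigma^*\subset(\mathbb{CP}^3)^*$, since each such point is a tangent hyperplane to $\Sigma$ and hence lies on $\Sigma^*$. The copy of $U$ inside $\Sigma^*$ is $U^*=\{\mathbb{P}(\Lambda^*)_{[z]}:[z]\in U\}$, the locus of intersection points of the line pairs, which coincides with the set of tangent hyperplanes $\langle z,\cdot\rangle$ at points $[z]\in U$. Because each $F_{[z]}$ extends beyond its own intersection point along two complete lines, reaching tangent hyperplanes at points of the $\alpha$- and $\beta$-lines lying outside $U$, we obtain $U^*\subsetneq T$, with $T$ genuinely swept out by whole projective lines rather than by pieces. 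This is the announced passage from \emph{local} to \emph{global}: $U$ contributes only pieces of null lines, but its image $T$ carries entire lines of the dual quadric.

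The main obstacle is the bookkeeping of the duality: one must pin down exactly which portion of $\Sigma^*$ equals $T$ and confirm the strict inclusion $U^*\subsetneq T$. Concretely, this requires checking that the complete dual lines $F^\pm_{[z]}$ really leave the small set $U^*$ for $[z]\in U$, equivalently that the $\alpha$- and $\beta$-lines through points of $U$ are not themselves contained in $U$, which holds for any proper open $U\subsetneq\Sigma$. Once this is verified, the rest reduces to the pointwise linear algebra of the null planes already set up for $U^\pm_{[z]}$.
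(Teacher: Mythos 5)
Your proposal is correct and follows essentially the same route as the paper, which states this Observation without a separate proof, relying on exactly the reasoning you spell out: the fibres $F_{[z]}=\mathbb{P}(U^+_{[z]})\cup\mathbb{P}(U^-_{[z]})$ are determined pointwise by the Cartan data and hence remain complete dual line pairs in $\Sigma^*\subset(\mathbb{CP}^3)^*$, while restriction to $U$ only truncates the null curves in the base. Your explicit verification that for any nonempty proper open $U$ some $\alpha$- or $\beta$-line through a point of $U$ leaves $U$ (giving the strict inclusion $U^*\subsetneq T$) is a detail the paper leaves implicit, and it is sound.
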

\begin{remark}\label{ident*}
In this example we constructed $F$ as a projective subbundle of $\mathbb{P}(V^*)$. This approach was useful to see the geometry of the situation. However, $\langle\cdot,\cdot\rangle$ induces an isomorphism between $V$ and $V^*$ and hence we have an isomorphism between $\Lambda^{0}\subset V^*$ and $\Lambda^{\perp}\subset V$. Using this, we will simplify the notation in the general construction and will construct $F$ as a fibre subbundle of $\mathbb{P}(V)$.
\end{remark}

\section{Construction of a complex surface containing family of line pairs} \label{s3}

Let $\Sigma$ be a complex surface with a complex Cartan connection $(V,\langle\cdot,\cdot\rangle,\Lambda,\mathcal{D})$ (see Section \ref{s1}) which is a complexification of a real surface $S$ with a Cartan connection.

\subsection*{Null curves and their horizontal lifts}
Now define the following objects:
\begin{itemize}
\item $U^+$ and $U^-$ - the null plane subbundles of $\Lambda^{\perp}\subset V$ defined by the degenerate inner product on $\Lambda^{\perp}$; note that $\Lambda=U^+\cap U^-$,
\item $t^+$ and $t^-$  - the line subbundles of the tangent bundle $T\Sigma$ such that $$\delta (\Lambda\otimes t^+)=U^+/\Lambda\ \text{   and   }\ \delta (\Lambda\otimes t^-)=U^-/\Lambda,$$ where $\delta$ denotes the isomorphism between $T\Sigma\otimes\Lambda$ and $\Lambda^{\perp}/\Lambda$ in the Cartan condition,
\item $C^+$ and $C^-$ - families of curves in $\Sigma$ defined by $t^+$ and $t^-$ (by requiring that $t^{\pm}$ is a tangent space to curves from $C^{\pm}$ families), 
\item $\Sgp$ denotes the leaf space of $C^+$ curves and $\Sgm$ the leaf space of $C^-$ curves,
\item $F^+=\mathbb{P}(U^+)$, $F^-=\mathbb{P}(U^-)$ - fibre bundles over $\Sigma$.
\end{itemize}

\begin{proposition} \label{lifts}
The connection $\mathcal{D}$ satisfies
$\mathcal{D}_X(U^{+})\subseteq U^{+}$ for $X\in t^{+}$ and $\mathcal{D}_Y(U^{-})\subseteq U^{-}$ for $Y\in t^{-}$.
\end{proposition}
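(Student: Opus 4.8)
The plan is to exploit the fact that, as $\Sigma$ is a surface, $V$ has rank $4$, and since $\langle\cdot,\cdot\rangle$ is non-degenerate the null plane $U^{+}$ is a \emph{maximal} isotropic subbundle; it therefore coincides with its own orthogonal complement, $U^{+}=(U^{+})^{\perp}$. Thus to prove $\mathcal{D}_X u\in U^{+}$ for a section $u$ of $U^{+}$ it suffices to show that $\langle \mathcal{D}_X u,w\rangle=0$ for every section $w$ of $U^{+}$, and the whole argument reduces to a short computation with the metric-compatibility of $\mathcal{D}$.

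First I would record the consequence of the definition of $t^{+}$. Since $\Lambda$ is null, metric compatibility gives $\langle \mathcal{D}_X\lambda,\lambda\rangle=\tfrac12 X\langle\lambda,\lambda\rangle=0$ for any section $\lambda$ of $\Lambda$, so $\mathcal{D}_X\lambda$ is a section of $\Lambda^{\perp}$ and $\delta$ is well defined. The identity $\delta(\Lambda\otimes t^{+})=U^{+}/\Lambda$ then says precisely that $\mathcal{D}_X\lambda\in U^{+}$ whenever $X$ is a section of $t^{+}$, which already gives $\mathcal{D}_X(\Lambda)\subseteq U^{+}$.

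Next I would choose locally a null section $s$ of $U^{+}$ complementary to $\Lambda$, so that $U^{+}=\spanu(\lambda,s)$ with $\langle s,s\rangle=\langle\lambda,s\rangle=\langle\lambda,\lambda\rangle=0$. It then remains to check that $\mathcal{D}_X s\in U^{+}=(U^{+})^{\perp}$, i.e. that $\mathcal{D}_X s$ is orthogonal to both $\lambda$ and $s$. For the pairing with $s$, metric compatibility yields $\langle \mathcal{D}_X s,s\rangle=\tfrac12 X\langle s,s\rangle=0$. For the pairing with $\lambda$, metric compatibility yields $\langle \mathcal{D}_X s,\lambda\rangle=X\langle s,\lambda\rangle-\langle s,\mathcal{D}_X\lambda\rangle=-\langle s,\mathcal{D}_X\lambda\rangle$, and this vanishes because $\mathcal{D}_X\lambda\in U^{+}$ by the previous step while $s\in U^{+}$ and $U^{+}$ is isotropic. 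Hence $\mathcal{D}_X s\in U^{+}$, and together with $\mathcal{D}_X\lambda\in U^{+}$ this gives $\mathcal{D}_X(U^{+})\subseteq U^{+}$. The statement for $U^{-}$ and $t^{-}$ then follows by interchanging the roles of $+$ and $-$.

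The argument is essentially forced once one notices the self-orthogonality $U^{+}=(U^{+})^{\perp}$, so I do not expect a serious obstacle; the only point needing care is the bookkeeping of the flag $\Lambda\subset U^{+}\subset\Lambda^{\perp}$, and in particular making sure that the defining property of $t^{+}$ is invoked exactly where the term $\langle s,\mathcal{D}_X\lambda\rangle$ must be killed. It is this step---rather than any generic metric identity---that singles out the direction $t^{+}$ and breaks the symmetry between $U^{+}$ and $U^{-}$.
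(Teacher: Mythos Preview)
Your proof is correct and follows essentially the same approach as the paper: both arguments hinge on the identity $U^{+}=(U^{+})^{\perp}$ for a maximal isotropic plane in rank $4$, then use metric compatibility of $\mathcal{D}$ together with the defining property $\mathcal{D}_X(\Lambda)\subseteq U^{+}$ for $X\in t^{+}$ to verify the required orthogonalities. The only cosmetic difference is that the paper works with a general section $u\in\Gamma U^{+}$ and checks $\langle\mathcal{D}_X u,\sigma\rangle=0$ and $\langle\mathcal{D}_X u,u\rangle=0$ directly, whereas you split $U^{+}=\spanu(\lambda,s)$ and treat the two basis sections separately; your version is slightly more explicit about why these two orthogonality checks span all of $U^{+}$.
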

\begin{proof}
Let $u\in \Gamma U^+$ and $X\in \Gamma t^{+}$. As $U^+\subseteq\Lambda^{\perp}$ is totally null, for any $\sigma\in \Gamma\Lambda$ we have that
$$\langle \mathcal{D}_Xu,\sigma\rangle=d_X\langle u,\sigma\rangle- \langle u,\mathcal{D}_X\sigma\rangle\in \langle U^+,U^+\rangle=0.$$
Moreover, differentiating the equation $\langle u,u\rangle=0$ we get that
$$\langle \mathcal{D}_Xu,u\rangle =0,$$
thus $$\mathcal{D}_Xu\in (U^+)^{\perp}=U^+.$$
\end{proof}
\begin{wniosek}
$\mathcal{D}$ induces a connection on $F^+$ along curves from $C^+$ and on $F^-$ along curves from $C^-$. Hence we can horizontally lift curves from $C^+$ to $F^+$ and from $C^-$ to $F^-$.    
\end{wniosek}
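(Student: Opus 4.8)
The plan is to deduce both assertions directly from Proposition \ref{lifts}. The essential point is that $\mathcal{D}$ is a connection on the full bundle $V$, but the proposition guarantees that in directions tangent to $C^+$-curves, i.e.\ for $X\in t^+$, the operator $\mathcal{D}_X$ sends sections of $U^+$ to sections of $U^+$. First I would phrase this as a partial connection: given a curve $\gamma\in C^+$, whose tangent $\dot\gamma$ lies in $t^+$ by definition of $C^+$, the operator $\mathcal{D}_{\dot\gamma}$ restricts to a well-defined covariant derivative along $\gamma$ on the restricted bundle $U^+|_\gamma$. This is precisely a connection on $U^+$ along the curves of $C^+$, and the symmetric statement holds for $U^-$ along $C^-$.

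Next I would pass from the vector bundle $U^+$ to its projectivisation $F^+=\mathbb{P}(U^+)$. A connection on a vector bundle always induces one on its projectivisation, because parallel transport along $\gamma$ is a linear isomorphism $U^+_{\gamma(0)}\to U^+_{\gamma(t)}$ and hence descends to a projective isomorphism of fibres $F^+_{\gamma(0)}\to F^+_{\gamma(t)}$. Concretely, given a point $[u_0]\in F^+_{\gamma(0)}$, I would choose any nonzero representative $u_0\in U^+_{\gamma(0)}$, solve the parallel-transport equation $\mathcal{D}_{\dot\gamma}u=0$ with initial value $u_0$, and set the horizontal lift to be $\tilde\gamma(t)=[u(t)]$. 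By Proposition \ref{lifts} the solution remains in $U^+$, and since parallel transport is a linear isomorphism the vector $u(t)$ never vanishes, so $[u(t)]$ is a genuine point of $F^+_{\gamma(t)}$.

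I would then verify well-definedness: rescaling the representative by $u_0\mapsto\lambda u_0$ rescales the entire solution to $\lambda u(t)$, by linearity of the transport equation, and hence leaves $[u(t)]$ unchanged. Thus the lift depends only on $[u_0]$ and not on the chosen representative, giving a bona fide horizontal lift to $F^+$ of every curve in $C^+$; the argument for $C^-$ and $F^-$ is identical.

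The only step needing care, and the one I would flag as the main (and essentially minor) obstacle, is confirming that the transport truly descends to a connection on the projective bundle rather than merely on $U^+$. This amounts to checking that the induced fibrewise parallel transport respects the projective structure, which is immediate once everything is expressed in terms of linear isomorphisms, together with preservation of non-vanishing along the flow, which again follows from invertibility of parallel transport. Everything else is a routine appeal to existence and uniqueness for the linear transport ODE.
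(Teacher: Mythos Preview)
Your argument is correct and is precisely the standard unpacking of why the corollary follows; the paper in fact gives no proof at all, treating the statement as an immediate consequence of Proposition~\ref{lifts}. Your elaboration---that $\mathcal{D}$ restricts to a connection on $U^\pm$ along $C^\pm$-curves by the proposition, and that a linear connection on a vector bundle descends to its projectivisation because parallel transport is a linear isomorphism---is exactly the intended reasoning.
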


We restrict $\Sigma$ to an open subset on which curves from families $C^+$ and $C^-$ are simply connected .
\subsection*{Leaf spaces of foliations of horizontal lifts of null curves}
\begin{proposition}
Locally, the horizontal lifts of curves from $C^+$ and $C^-$ to $F^+$ and $F^-$ define foliations of the total spaces of $F^+$ and $F^-$ respectively. Locally, the leaf spaces of the foliations are manifolds. 
\end{proposition}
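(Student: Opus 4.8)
The plan is to establish the foliation property from the previous corollary and then verify that the leaf spaces are manifolds by a standard local argument. The corollary just proved tells us that $\mathcal{D}$ restricts to a connection on $F^+$ along each curve of $C^+$ (and likewise for $F^-$), so over each such curve we obtain a well-defined horizontal lift. First I would observe that through every point $p$ of the total space of $F^+$ there passes exactly one horizontal lift: the point $p$ lies in the fibre $F^+_{[z]}$ over some $[z]\in\Sigma$, and since $C^+$ foliates (the restricted) $\Sigma$ by the integral curves of the line field $t^+$, there is a unique $C^+$-curve through $[z]$; parallel transport of $p$ along this curve via the induced connection produces the unique leaf through $p$. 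This gives a partition of the total space of $F^+$ into one-dimensional horizontal submanifolds.

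To upgrade this partition to a genuine foliation, the key step is to produce a smooth (here holomorphic) integrable distribution whose integral curves are exactly these lifts. I would define a horizontal line subbundle $H^+\subset T F^+$ of the tangent bundle of the total space, spanned pointwise by the horizontal lift (with respect to the connection on $F^+$) of a local generator of $t^+$. Because $t^+$ is one-dimensional and the connection is holomorphic, $H^+$ is a well-defined holomorphic rank-one distribution, and any rank-one distribution is automatically involutive, so the Frobenius theorem applies and yields the foliation of the total space of $F^+$; the argument for $F^-$ is identical. Here I am implicitly using that we have already restricted $\Sigma$ so that the curves from $C^+$ and $C^-$ are simply connected, which guarantees that parallel transport along a whole leaf is single-valued and the lifts close up into embedded curves rather than merely immersed ones.

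For the final assertion that the leaf spaces are (locally) manifolds, I would appeal to the local structure provided by the Frobenius theorem: in a suitable local chart the leaves of $H^+$ are the level sets of a submersion onto a transversal, and the leaf space is locally modelled on that transversal. The emphasis on \emph{locally} in the statement is exactly what lets us sidestep the usual pathologies (non-Hausdorff or non-manifold global quotients): we only claim that each point of the total space has a neighbourhood whose leaf space is a manifold, which is immediate from the flow-box coordinates. The main obstacle I anticipate is not the Frobenius step, which is routine once the distribution is set up, but rather checking that the horizontal lifts genuinely integrate to the partition described, i.e.\ that the connection induced on the projectivised bundle $F^+=\mathbb{P}(U^+)$ by $\mathcal{D}$ along $C^+$-curves (whose existence is exactly the content of Proposition~\ref{lifts} and its corollary) has holonomy compatible with defining a single transversal leaf through each point; the simple-connectedness assumption on the curves is what makes this work, and I would want to state explicitly where it is used.
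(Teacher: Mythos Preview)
Your proposal is correct and reaches the same conclusion as the paper, but the route is slightly different in emphasis. The paper's proof is much briefer and argues directly at the level of the curves: after restricting $\Sigma$ so that the $C^+$ curves do not intersect, it observes that horizontal lifts of distinct $C^+$ curves are sections over disjoint bases and hence cannot meet, and that two lifts of the same curve are (locally) distinct parallel sections and so also do not meet; from this partition it simply declares a foliation and quotes that leaf spaces of foliations are locally Hausdorff manifolds. Your argument instead packages the same data as a holomorphic rank-one distribution $H^+$ on the total space and invokes Frobenius and flow-box coordinates explicitly. What your approach buys is a cleaner justification that the partition really is a foliation (the paper's passage from ``non-intersecting curves'' to ``foliation'' is tacit), and your remark about simple-connectedness is more precise than the paper, which only mentions the restriction beforehand without saying how it is used. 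Conversely, the paper's argument is shorter and avoids naming Frobenius at all; for a rank-one holomorphic distribution this is harmless, since both arguments are essentially the existence and uniqueness of integral curves of a line field.
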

\begin{proof}
We will prove the proposition for $C^+$ curves. The proof for $C^-$ curves is analogous.
Observe that because $C^+$ is a family of integral curves for $t^+$ distribution, we can restrict $\Sigma$ such that the curves from $C^+$ do not intersect each other. Horizontal lifts of any curve $c^+\in C^+$ are sections of $F^+$ over $c^+$. Hence horizontal lifts of two different curves do not intersect. Moreover, from properties of horizontal lifts, locally two different horizontal lifts of a $c^+$ do not intersect each other.

Hence the horizontal lifts of $C^+$ curves define a foliation of $F^+$ and thus the leaf space of the foliation is (locally in $\Sigma$) a Hausdorff manifold.
\end{proof}

\begin{definition}
Denote by $T^+$ the leaf space of the foliation of $F^+$ by horizontal lifts of curves from $C^+$ and by $T^-$ the leaf space of the foliation of $F^-$ by horizontal lifts from $C^-$. 
\end{definition}

  The lifted curves are transversal to the fibres of $F^+$ and $F^-$ respectively. Thus we can further restrict $\Sigma$ such that any curve from $T^+$ ($T^-$ respectively) intersects any fibre of $F^+$ ($F^-$ respectively) at most once. Moreover, through each point of a particular fibre of $F^+$ ($F^-$ respectively) goes exactly one curve from $T^+$ ($T^-$ respectively). Recall also that all curves intersecting a particular fibre of $F^+$ or $F^-$ arise as horizontal lifts of one curve from the $C^+$ or $C^-$ family respectively. As a consequence we obtain the following proposition.
\begin{proposition} \label{lines}
Any curve $c^+\in C^+$ defines a projective line in $T^+$ given by horizontal lifts of $c^+$. Any curve $c^-\in C^-$ defines a projective line in $T^-$ given by horizontal lifts of $c^-$.\qed
\end{proposition}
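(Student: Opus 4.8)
The plan is to fix a single curve $c^+\in C^+$, analyse the restricted bundle $F^+|_{c^+}$, and show that the horizontal lifts of $c^+$ are naturally parametrised by one fibre $F^+_{[z]}$, which is itself a projective line. The argument for $c^-$, $F^-$ and $T^-$ will be identical.

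First I would note that, since $U^+$ is a rank $2$ null subbundle of $V$, each fibre $F^+_{[z]}=\mathbb{P}(U^+_{[z]})$ is isomorphic to $\mathbb{CP}^1$; restricting $F^+$ to $c^+$ then gives a $\mathbb{CP}^1$-bundle over $c^+$. By Proposition \ref{lifts} the connection $\mathcal{D}$ preserves $U^+$ in the directions of $t^+$, so (as recorded in the corollary) it induces a holomorphic connection on $F^+$ along $c^+$. Its parallel transport is the projectivisation of the linear parallel transport on $U^+|_{c^+}$ and therefore provides, for any two points $[z],[z']\in c^+$, a holomorphic isomorphism $F^+_{[z]}\xrightarrow{\sim}F^+_{[z']}$. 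The horizontal lifts of $c^+$ are exactly the images of the parallel sections of $F^+|_{c^+}$, and each such lift is uniquely determined by its value at a chosen basepoint $[z_0]\in c^+$. Hence the set of horizontal lifts of $c^+$ is in holomorphic bijection with the single fibre $F^+_{[z_0]}\cong\mathbb{CP}^1$.

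To transfer this to $T^+$ I would use that, by the preceding propositions, these lifts are precisely the leaves of the foliation of $F^+$, and that the leaf through any point of $F^+|_{c^+}$ projects to $c^+$ and therefore stays inside $F^+|_{c^+}$. Thus the inclusion $F^+|_{c^+}\hookrightarrow F^+$ descends to a map from the leaf space of $F^+|_{c^+}$ into $T^+$, and composing with the bijection above yields a holomorphic map $\mathbb{CP}^1\to T^+$ whose image is the collection of horizontal lifts of $c^+$ regarded as points of $T^+$. The point requiring care is that this map is a genuine embedding rather than a mere set-theoretic bijection: injectivity into $T^+$ follows because distinct parallel sections are distinct, non-intersecting leaves and hence distinct points of the leaf space, while the immersion and biholomorphy-onto-image properties follow from the holomorphy of $\mathcal{D}$ (hence of its parallel transport) together with the transversality of the lifts to the fibres and the local triviality of the leaf space established earlier. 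This transversality-and-holomorphy bookkeeping, rather than any single hard estimate, is where the real work lies.
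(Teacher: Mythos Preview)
Your argument is correct and follows essentially the same route as the paper: the paper's justification is the short paragraph immediately preceding the proposition, which observes that through each point of a fixed fibre $F^+_{[z_0]}$ passes exactly one horizontal lift of $c^+$, so the set of such lifts is identified with $F^+_{[z_0]}\cong\mathbb{CP}^1$. Your write-up simply expands this, adding the verification that the resulting map $\mathbb{CP}^1\to T^+$ is a holomorphic embedding; the paper leaves that implicit and marks the proposition with \qed.
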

As an immediate consequence of the above proposition we obtain that $T^+$ and $T^-$ are foliated by projective lines. 
\begin{definition}\label{plines}
For $c^+\in C^+$ we denote by $l^+_{c^+}\subset T^+$ the projective line given by horizontal lifts of $c^+$. 
For $c^-\in C^-$ we denote by $l^-_{c^-}\subset T^-$ the projective line given by horizontal lifts of $c^-$. 

As any $z\in\Sigma$ defines exactly one curve $c^+$ from $C^+$ and $c^-$ from $C^-$ such that $z\in c^+$ and $z \in c^-$ we set $l^+_z:=l^+_{c^+}$ and $l^-_z:=l^-_{c^-}$.
\end{definition}
 
Note that $\mathbb{P}(\Lambda)$ is a section of both $F^+$ and $F^-$.
 \begin{proposition}\label{trans}
 The elements of $T^+$ and $T^-$ which intersect $\mathbb{P}(\Lambda)$ are transversal to $\mathbb{P}(\Lambda)$.
 \end{proposition}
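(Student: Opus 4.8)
The plan is to work in the total space $F^+$, reducing transversality to the injectivity of the Cartan map $\delta$; the argument for $T^-$ is identical with $U^+,t^+,F^+$ replaced by $U^-,t^-,F^-$. Let $\ell$ be an element of $T^+$, that is, the horizontal lift of some curve $c^+\in C^+$, and suppose $\ell$ meets $\mathbb{P}(\Lambda)$ at a point $p$ lying over $z\in\Sigma$. Since the fibre of $\mathbb{P}(\Lambda)$ over $z$ is the single point $\mathbb{P}(\Lambda_z)\in F^+_z$, we have $p=\mathbb{P}(\Lambda_z)$; write $X\in t^+_z$ for the tangent of $c^+$ at $z$. As $F^+=\mathbb{P}(U^+)$ is a $\mathbb{P}^1$-bundle over the surface $\Sigma$, the tangent space $T_pF^+$ is three-dimensional and fits in $0\to V_p\to T_pF^+\to T_z\Sigma\to 0$ with $V_p$ the one-dimensional vertical direction. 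Because $\ell$ and $\mathbb{P}(\Lambda)$ have complementary dimensions $1$ and $2$ in $F^+$, transversality at $p$ is equivalent to $T_p\ell\cap T_p\mathbb{P}(\Lambda)=0$, i.e.\ to $T_p\ell\not\subseteq T_p\mathbb{P}(\Lambda)$.

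First I would identify the two tangent objects. By the Corollary following Proposition~\ref{lifts}, $\mathcal{D}$ induces a connection on $F^+$ along $C^+$-curves, and $T_p\ell$ is spanned by the horizontal lift $\tilde X$ of $X$ for this connection, so $\tilde X$ projects to $X$. Since $\mathbb{P}(\Lambda)$ is a section of $F^+$, the plane $T_p\mathbb{P}(\Lambda)$ projects isomorphically onto $T_z\Sigma$ and meets $V_p$ trivially. Writing $ds$ for the differential of this section, both $ds(X)$ and $\tilde X$ project to $X$, so $ds(X)-\tilde X\in V_p$; as $T_p\mathbb{P}(\Lambda)\cap V_p=0$, we get $\tilde X\in T_p\mathbb{P}(\Lambda)$ if and only if $ds(X)-\tilde X=0$, that is, if and only if $\mathbb{P}(\Lambda)$ is horizontal in the direction $X$.

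It then remains to show this vertical component is nonzero. Choose a local section $\sigma$ spanning $\Lambda$. Since $\sigma\in\Gamma\Lambda\subseteq\Gamma U^+$ and $X\in t^+$, Proposition~\ref{lifts} gives $\mathcal{D}_X\sigma\in U^+$, and under the identification $V_p\cong\Hom(\Lambda_z,U^+_z/\Lambda_z)$ the vector $ds(X)-\tilde X$ is represented by $\mathcal{D}_X\sigma\bmod\Lambda\in U^+/\Lambda\subseteq\Lambda^{\perp}/\Lambda$. But this is exactly $\delta(X\otimes\sigma)$, and the Cartan condition of Definition~\ref{CartanCon} states that $\delta\colon T\Sigma\otimes\Lambda\to\Lambda^{\perp}/\Lambda$ is an isomorphism, hence injective; as $X\neq0$ and $\sigma\neq0$ we get $\delta(X\otimes\sigma)\neq0$. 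Therefore $ds(X)-\tilde X\neq0$, so $T_p\ell\not\subseteq T_p\mathbb{P}(\Lambda)$ and $\ell$ meets $\mathbb{P}(\Lambda)$ transversally.

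The step I expect to be the crux is recasting transversality as the non-horizontality of $\mathbb{P}(\Lambda)$ along $t^{\pm}$ and identifying the resulting vertical vector with $\delta(X\otimes\sigma)$; once this is done the conclusion is immediate from the injectivity built into the Cartan condition. The only remaining care is bookkeeping: verifying $\mathcal{D}_X\sigma\in U^+\subseteq\Lambda^{\perp}$ so that $\delta$ applies, and confirming that the induced connection on the projectivisation $\mathbb{P}(U^+)$ has the horizontal lift used above.
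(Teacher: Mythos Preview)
Your proof is correct and follows essentially the same approach as the paper: both reduce transversality to the statement that the section $\mathbb{P}(\Lambda)$ of $F^{\pm}$ is nowhere horizontal along $t^{\pm}$, and both deduce this from the injectivity of $\delta$ in the Cartan condition. Your version is simply a more explicit unpacking of the paper's one-paragraph argument, making precise the tangent space splitting and the identification of the vertical component $ds(X)-\tilde X$ with $\delta(X\otimes\sigma)$.
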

\begin{proof}
From the Cartan condition we have that $\mathcal{D}|_{\Lambda}$ is an isomorphism between $T\Sigma\otimes\Lambda$ and $\Lambda^{\perp}/\Lambda$. Hence $\mathcal{D}|_{\Lambda}$ is $0$ in $\Lambda^{\perp}/\Lambda$ only for the zero section and consequently the induced connections on $F^+$ and $F^-$ have the property that non-zero elements of $T\Sigma\otimes\mathbb{P}(\Lambda)$ leave $\mathbb{P}(\Lambda)$. In particular the horizontal lifts of curves $c^+\in C^+$ and $c^-\in C^-$ must be transversal to $\mathbb{P}(\Lambda)$
\end{proof} 
 \begin{definition}
 For each $z\in \Sigma$ we have that $F^+_z\cap\F^-_z=\mathbb{P}(\Lambda)_z$ thus we can define a gluing of $T^+$ with $T^-$ by identifying those lifted curves in $F^+$ and $F^-$ which intersect at $\mathbb{P}(\Lambda)_z= F^+_z\cap F^-_z$ for some $z\in \Sigma$:
 $$T:=T^+\bigsqcup_{\sim}  T^-,$$
 where for $t^+\in T^+$ and $t^-\in T^-$ we have $t^+\sim t^-$ if $t^+\cap t^-=\{\mathbb{P}(\Lambda)_z\}$ for some $z\in\Sigma$.
 \end{definition}
 
 Note that as lifted curves are transversal to $\mathbb{P}(\Lambda)$, by possible further restriction of $\Sigma$ we can ensure that any element of $T^+$ is glued with at most one element of $T^-$ and vice versa. Hence, $T$ arises as a gluing of manifolds $T^+$ and $T^-$ on some set. Without loss of generality we can assume that the set is open and connected.

\begin{remark}\label{remrest}
As for any point of $\mathbb{P}(\Lambda)$ we have exactly one element of $T^+$ and one element of $T^-$ which intersect in it, the open subset on which we glue can be identified with $\mathbb{P}(\Lambda)$ and thus with $\Sigma$.
 
By restricting $\Sigma$ to a $\pm$-convex region i.e., to a region in which every curve from $C^+$ intersects every curve from $C^-$ and vice-versa, we get that every projective line from $T^+$ (defined as in Proposition \ref{lines}) intersects every projective line in $T^-$ in exactly one point.
\end{remark}


\begin{proposition}
After restricting $\Sigma$ as in Remark \ref{remrest}, $T$ is a Hausdorff manifold.
\end{proposition}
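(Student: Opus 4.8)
The plan is to treat this as a question about gluing two manifolds along an open set. Recall the standard fact that if $T^+$ and $T^-$ are Hausdorff manifolds and we glue them along open subsets $V^+\subset T^+$ and $V^-\subset T^-$ via a homeomorphism $\phi\colon V^+\to V^-$, then the resulting space is automatically locally Euclidean, and it is Hausdorff \emph{if and only if} the graph $\Gamma_\phi=\{(x,\phi(x)) : x\in V^+\}$ is closed in $T^+\times T^-$. The previous proposition already gives that $T^+$ and $T^-$ are (locally) Hausdorff manifolds, and $T$ is by construction obtained by exactly such a gluing, so the whole content of the statement is the closedness of $\Gamma_\phi$; every other manifold axiom is inherited from $T^+$ and $T^-$.

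First I would make the gluing map explicit. By Remark \ref{remrest} both $V^+$ and $V^-$ are identified with $\mathbb{P}(\Lambda)\cong\Sigma$: each $z\in\Sigma$ determines the unique leaf $p^+(z)\in T^+$ and the unique leaf $p^-(z)\in T^-$ passing through $\mathbb{P}(\Lambda)_z$, and the resulting maps $p^\pm\colon\Sigma\to V^\pm$ are homeomorphisms onto the open sets $V^\pm$. Under these identifications $\phi=p^-\circ(p^+)^{-1}$ is simply the identity on $\Sigma$, so $\Gamma_\phi=\{(p^+(z),p^-(z)) : z\in\Sigma\}$. I would then test closedness on a convergent sequence: suppose $z_n\in\Sigma$ with $p^+(z_n)\to a$ in $T^+$ and $p^-(z_n)\to b$ in $T^-$. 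If $a\in V^+$, then $a=p^+(z_\infty)$ for some $z_\infty$, and continuity of $(p^+)^{-1}$ on $V^+$ forces $z_n\to z_\infty$, hence $b=p^-(z_\infty)$ and $(a,b)\in\Gamma_\phi$; the case $b\in V^-$ is symmetric. Thus the only way closedness could fail is a sequence $z_n$ for which \emph{both} limits escape the gluing region, i.e.\ $a\in T^+\setminus V^+$ and $b\in T^-\setminus V^-$ simultaneously, which is precisely the configuration giving a pair of non-separated points.

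Ruling out this simultaneous escape is the heart of the argument, and I expect it to be the main obstacle. Here I would use transversality (Proposition \ref{trans}) together with the $\pm$-convexity restriction of Remark \ref{remrest}. A leaf $p^+(z_n)$ can only limit to a leaf $a$ missing $\mathbb{P}(\Lambda)$ if the intersection points $\mathbb{P}(\Lambda)_{z_n}$ leave every compact subset of the relevant fibres, which forces $z_n$ to leave every compact subset of $\Sigma$; the same would have to happen on the $T^-$ side. Since after the restrictions of Remark \ref{remrest} the families $C^+$ and $C^-$ are mutually convex, with each $C^+$-curve meeting each $C^-$-curve exactly once, and the horizontal lifts are transversal to $\mathbb{P}(\Lambda)$, the intersection point with $\mathbb{P}(\Lambda)$ depends continuously and properly on the leaf on both sides. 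I would argue that this properness prevents a single sequence $z_n$ from driving $p^+(z_n)$ and $p^-(z_n)$ off the gluing region at once, so no non-separated pair can occur and $\Gamma_\phi$ is closed. The delicate point is that Hausdorffness is genuinely a \emph{global} feature of the gluing rather than a local one, so the successive restrictions of $\Sigma$ (to simply connected curves, to a region where the lifts foliate $F^\pm$, and finally to a $\pm$-convex region) are exactly what is needed to keep these limits under control; everything else in the proof is formal.
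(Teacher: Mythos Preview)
Your framework is correct and actually more careful than the paper's one-line argument: the paper simply asserts that ``the condition that every line from $T^+$ intersects every line from $T^-$ implies that there are no double points on the boundary of the gluing region,'' which is precisely your graph-closedness criterion together with the reduction to the simultaneous-escape case. So the overall strategy matches.

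Where your argument is still soft is the last step. Invoking ``properness'' of $p^\pm$ and transversality (Proposition~\ref{trans}) does not by itself rule out simultaneous escape; transversality was already consumed in making the gluing well defined, and properness of the homeomorphisms $p^\pm\colon\Sigma\to V^\pm$ says nothing about limits lying \emph{outside} $V^\pm$. The missing observation is structural: $T^+$ is foliated by the projective lines $l^+_{c^+}$, giving a continuous projection $T^+\to\Sigma^+$ onto the leaf space of $C^+$-curves (in the coordinate description of Propositions~\ref{31} and~\ref{gluingcoordinates} this is just the base coordinate $z$ on $F^+|_{c^-_0}$), and likewise $T^-\to\Sigma^-$ via $\tilde z$. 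Thus $p^+(z_n)\to a$ in $T^+$ forces the $C^+$-curve through $z_n$ to converge in $\Sigma^+$, and $p^-(z_n)\to b$ in $T^-$ forces the $C^-$-curve through $z_n$ to converge in $\Sigma^-$. Now $\pm$-convexity (Remark~\ref{remrest}) says exactly that the map $\Sigma\to\Sigma^+\times\Sigma^-$ is a bijection, so $z_n$ itself converges in $\Sigma$; hence $a=p^+(\lim z_n)\in V^+$, contradicting $a\notin V^+$. Equivalently, in the coordinates of Proposition~\ref{gluingcoordinates}: escaping $V^+$ forces $w_n$ to leave the $\tilde z$-range, but $\tilde z_n=w_n$ is the \emph{base} coordinate on $T^-$, so the sequence cannot converge in $T^-$ at all. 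This is the content behind the paper's phrase ``every line from $T^+$ intersects every line from $T^-$''; once you insert this step your proof is complete.
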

\begin{proof}
The condition that every line from $T^+$ intersect every line from $T^-$ implies that there are no double points on the boundary of the gluing region and therefore the manifold $T$ is Hausdorff.
\end{proof}

\subsection*{Coordinate description of the gluing}
Now, we will describe the leaf spaces of foliations $T^+$ and $T^-$ and the gluing procedure in another way. We will also introduce coordinates on $T^+$ and $T^-$.

Firstly, recall that for any curve $c^{\pm}\in C^{\pm}$, horizontal lifts of $c^{\pm}$ to $F^{\pm}$ are sections of $F^{\pm}|_{c^{\pm}}$. Choose two intersecting curves $c^+_0\in C^+$ and $c^-_0\in C^-$.
As the curves from $C^+$ and $C^-$ are transversal to each other and as we restricted $\Sigma$ to a $\pm$-convex region, lifts of curves from $C^+$ intersect $F^+|_{c^-_0}$  in exactly one point and lifts of curves from $C^-$ intersect $F^-|_{c^+_0}$ in exactly one point. As a consequence we obtain the following proposition.
\begin{proposition}\label{31}
With possible restriction of $\Sigma$, the manifolds $T^+$ and $T^-$ can be identified with $F^+|_{c^-_0}$ and $F^-|_{c^+_0}$ respectively.\qed
\end{proposition}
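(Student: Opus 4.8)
The plan is to realise $F^+|_{c^-_0}$ as a global holomorphic transversal to the foliation of $F^+$ by horizontal lifts of $C^+$ curves, so that the leaf-space projection $\pi^+\colon F^+\to T^+$ restricts on $F^+|_{c^-_0}$ to a biholomorphism onto $T^+$; the statement for $T^-$ and $F^-|_{c^+_0}$ then follows by exchanging the roles of $+$ and $-$. First I would record the dimension count: since $\Sigma$ is a complex surface and $F^+=\mathbb{P}(U^+)$ is a $\mathbb{CP}^1$-bundle over it, $F^+$ has dimension $3$; the leaves (horizontal lifts of $C^+$ curves) are $1$-dimensional, so $T^+$ has dimension $2$, while $F^+|_{c^-_0}$ is a $\mathbb{CP}^1$-bundle over the $1$-dimensional curve $c^-_0$ and hence is also of dimension $2$. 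So an identification is dimensionally possible.

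The key geometric step is transversality. Fix $p\in F^+|_{c^-_0}$ lying over $z_0\in c^-_0$, and let $c^+\in C^+$ be the unique curve through $z_0$, so $z_0=c^+\cap c^-_0$. Under the projection $F^+\to\Sigma$ the $2$-dimensional space $T_p\!\left(F^+|_{c^-_0}\right)$ maps onto $t^-_{z_0}$, the fibre direction being vertical. The leaf through $p$ is the horizontal lift of $c^+$, whose tangent vector projects to a nonzero element of $t^+_{z_0}$. Because $\delta$ is an isomorphism and $U^+/\Lambda\neq U^-/\Lambda$, the line fields $t^+$ and $t^-$ are everywhere distinct, hence transversal, so $t^+_{z_0}\cap t^-_{z_0}=0$. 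Therefore the leaf direction cannot lie in $T_p\!\left(F^+|_{c^-_0}\right)$, and $F^+|_{c^-_0}$ is transversal to the foliation at each of its points.

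To assemble the identification I would invoke the intersection properties recorded just before the statement, valid after restricting $\Sigma$ to a $\pm$-convex region: every leaf meets $F^+|_{c^-_0}$ in exactly one point, and through every point of $F^+|_{c^-_0}$ passes a unique leaf. Thus $\pi^+$ restricts to a bijection $F^+|_{c^-_0}\to T^+$. Transversality makes this restriction a holomorphic submersion between equidimensional complex manifolds, i.e. a local biholomorphism; a bijective local biholomorphism is a biholomorphism, giving $T^+\cong F^+|_{c^-_0}$, and symmetrically $T^-\cong F^-|_{c^+_0}$.

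The main obstacle is promoting the ``exactly one point'' intersection from a local to a genuinely global statement, which is precisely where the $\pm$-convexity restriction of $\Sigma$ and the earlier arrangement that lifted curves meet each fibre at most once are needed; without these the map $\pi^+|_{F^+|_{c^-_0}}$ would only be a local diffeomorphism. Once these normalisations are in place the remaining content is the transversality computation above together with the standard fact that a transversal meeting each leaf exactly once realises the leaf space.
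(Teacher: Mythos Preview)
Your proposal is correct and follows essentially the same route as the paper. The paper's own argument is the short paragraph immediately preceding the statement (the proposition is marked \qed{} with no separate proof): it observes that horizontal lifts of $C^+$ curves are sections over those curves, and that after restricting to a $\pm$-convex region each such lift meets $F^+|_{c^-_0}$ in exactly one point. You reproduce this reasoning and additionally make explicit the transversality computation (via $t^+\cap t^-=0$) and the passage from ``bijection plus transversality'' to ``biholomorphism'', which the paper leaves implicit.
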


The gluing of $F^+|_{c^-_0}$ with $F^-|_{c^+_0}$ induced from the gluing of $T^+$ and $T^-$ can be described using the following coordinates. Let $z,\tilde{z}$ be local coordinates on $\Sigma$ such that curves from $C^+$ are given by $z=\constans$ and those from $C^-$ by $\tilde{z}=\constans$. Suppose moreover, that $c^+_0$ is given by $z=0$ and  $c^-_0$ by $\tilde{z}=0$. This induces local coordinates $(z,w)$ near $\mathbb{P}(\Lambda)$ on $F^+|_{c^-_0}$ and $(\tilde{z},\tilde{w})$ near $\mathbb{P}(\Lambda)$ on $F^-|_{c^+_0}$ in the following way.
\begin{itemize}
\item $(z=a, w=b)$ is a point in a neighbourhood of $\mathbb{P}(\Lambda)_{(z=a,\tilde{z}=0)}$ in $F^+|_{c^-_0}$ which correspond to a horizontal lift of $+$ curves intersecting $\mathbb{P}(\Lambda)$ over the point $(z=a, \tilde{z}=b)$.
\item $(\tilde{z}=\tilde{a}, \tilde{w}=\tilde{b})$ is a point in a neighbourhood of $\mathbb{P}(\Lambda)_{(z=0,\tilde{z}=\tilde{b})}$ in $F^+|_{c^-_0}$ which correspond to a horizontal lift of $-$ curves intersecting $\mathbb{P}(\Lambda)$ over the point $(z=\tilde{a}, \tilde{z}=\tilde{b})$.
\end{itemize}


The following proposition is an immediate consequence of the way we constructed the coordinates.

\begin{proposition}\label{gluingcoordinates}
The gluing $F^+|_{c^-_0}$ and $F^-|_{c^-+_0}$ is given by $z=\tilde{w}$, $w=\tilde{z}$.\qed
\end{proposition}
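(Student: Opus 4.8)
The plan is to unwind the two coordinate definitions and impose the gluing condition from the definition of $T$, namely that $t^+\in T^+$ and $t^-\in T^-$ are identified precisely when they meet $\mathbb{P}(\Lambda)$ over one and the same point $z\in\Sigma$. Under the identifications $T^+\cong F^+|_{c^-_0}$ and $T^-\cong F^-|_{c^+_0}$ from Proposition~\ref{31}, each leaf is completely determined by the single point at which it crosses the section $\mathbb{P}(\Lambda)$, so the gluing amounts to matching these crossing points in $\Sigma$.

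First I would record, from the defining bullets for the coordinates, the two maps ``leaf $\mapsto$ its intersection with $\mathbb{P}(\Lambda)$'' expressed in the base coordinates $(z,\tilde{z})$ of $\Sigma$. A point of $T^+$ with coordinates $(z,w)=(a,b)$ is, by construction, the horizontal lift of the $+$-curve $\{z=a\}$ that crosses $\mathbb{P}(\Lambda)$ over the $\Sigma$-point $(z=a,\tilde{z}=b)$; thus here the base coordinate $z$ records the label of the $+$-curve while the fibre coordinate $w$ records the $\tilde{z}$-coordinate of the crossing point. Symmetrically, a point of $T^-$ with coordinates $(\tilde{z},\tilde{w})=(\alpha,\beta)$ is the horizontal lift of the $-$-curve $\{\tilde{z}=\alpha\}$ crossing $\mathbb{P}(\Lambda)$ over the $\Sigma$-point $(z=\beta,\tilde{z}=\alpha)$, so now the fibre coordinate $\tilde{w}$ records the $z$-coordinate of the crossing point.

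The final step is to compare. Two leaves are glued exactly when their crossing points in $\Sigma$ coincide, i.e. $(a,b)=(\beta,\alpha)$ in the $(z,\tilde{z})$ coordinates, which reads off as $a=\beta$ and $b=\alpha$, that is $z=\tilde{w}$ and $w=\tilde{z}$. The only thing to watch is the bookkeeping: the coordinate swap is not an accident but reflects that on $F^+|_{c^-_0}$ the crossing point is recorded as $(\text{base }z,\ \text{fibre recording }\tilde{z})$ whereas on $F^-|_{c^+_0}$ it is recorded as $(\text{fibre recording }z,\ \text{base }\tilde{z})$; keeping these two conventions straight is the whole content of the argument. I do not anticipate a genuine obstacle here, since by Proposition~\ref{trans} the lifts are transversal to $\mathbb{P}(\Lambda)$, so each leaf does meet the section in a single well-defined point and both crossing-point maps are honest local coordinate charts near $\mathbb{P}(\Lambda)$; the statement is then a direct consequence of the definitions, as its \emph{\qed}-only status signals.
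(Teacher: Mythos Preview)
Your proposal is correct and is precisely the unwinding the paper has in mind: the proposition is stated with a bare \qed\ because it is meant to be read off directly from the two bulleted coordinate definitions, and you do exactly that by matching the $\mathbb{P}(\Lambda)$-crossing points in $\Sigma$. The only extra content you add---invoking Proposition~\ref{trans} to ensure each leaf meets $\mathbb{P}(\Lambda)$ in a single point so that the crossing-point maps are honest charts---is a sensible justification that the paper leaves implicit.
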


Recall that projective lines on $T^+$ or $T^-$ are given by horizontal lifts of a fixed $\pm$-curves respectively. This corresponds to fibres of $F^+|_{c^-_0}$ and $F^-|_{c^-+_0}$ respectively. This implies the following corollary.
\begin{wniosek}
The normal bundle of any projective lines in $T$ defined by horizontal lifts of particular $\pm$-curves is isomorphic to $\mathcal{O}$. 
\end{wniosek}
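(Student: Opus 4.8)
The plan is to work directly with the coordinate description established in Proposition~\ref{gluingcoordinates}. A projective line in $T$ defined by horizontal lifts of a particular $\pm$-curve corresponds, under the identifications of Proposition~\ref{31}, to a fibre of $F^+|_{c^-_0}$ or of $F^-|_{c^+_0}$. In the local coordinates $(z,w)$ on $F^+|_{c^-_0}$, a fibre of $F^+$ is exactly a curve $z=\constans$, which is a $\mathbb{CP}^1$ since the fibres $F^+_z=\mathbb{P}(U^+_z)$ are projective lines. So first I would fix such a line $L=\{z=a\}$ inside $T^+$ and aim to compute its normal bundle $N_L$ as a line bundle over $\mathbb{CP}^1$, then argue that passing from $T^+$ to the glued manifold $T$ does not change the answer.

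The main computation is to identify $N_L$. Since $T^+\cong F^+|_{c^-_0}$ is a two-dimensional manifold coordinatised (near $\mathbb{P}(\Lambda)$) by $(z,w)$ with $w$ a fibre coordinate along the $\mathbb{CP}^1$ and $z$ transverse to it, the normal bundle of $L=\{z=a\}$ is spanned by $\partial/\partial z$. To see how this section transforms I would cover $\mathbb{CP}^1$ by the two standard charts for the fibre coordinate $w$ and track the transition function for $z$ across the change of fibre chart. Because $z$ is a coordinate on the base $\Sigma$ (constant along each fibre) and is genuinely independent of the fibre direction, the transition function for the normal direction $\partial/\partial z$ is independent of $w$; hence the normal bundle has degree $0$, i.e. $N_L\cong\mathcal{O}$. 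The same argument applied verbatim to $T^-$ with coordinates $(\tilde z,\tilde w)$ gives $N_{L^-}\cong\mathcal{O}$ for lines in $T^-$.

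Next I would check that the gluing is harmless. By Proposition~\ref{gluingcoordinates} the gluing $z=\tilde w$, $w=\tilde z$ is biholomorphic on an open set, so $T$ is a complex surface containing both families of lines; a line $L\subset T^+$ of the form $z=a$ meets the gluing region in an open arc and is extended across it, but its isomorphism type in $T$ is determined by an arbitrarily small tubular neighbourhood, on which the coordinate description above already applies. Thus the normal bundle computed inside $T^+$ (or $T^-$) agrees with the normal bundle of the line inside $T$, and equals $\mathcal{O}$.

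The step I expect to be the main obstacle is making the normal-bundle transition-function argument fully rigorous rather than heuristic: one must be careful that the coordinates $(z,w)$ are honest holomorphic coordinates only in a neighbourhood of $\mathbb{P}(\Lambda)$, so covering the whole fibre $\mathbb{CP}^1$ requires a second chart at the opposite point of the fibre and a verification that the overlap transition for $z$ is constant (independent of the fibre parameter). Equivalently, one can sidestep explicit charts by invoking that $L$ is a fibre of the fibration $\pi\colon F^+|_{c^-_0}\to c^-_0$ over a one-dimensional base: the normal bundle of a fibre of a holomorphic fibre bundle is the pullback of the tangent space of the base at the relevant point, hence trivial, giving $N_L\cong\mathcal{O}$ directly. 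I would present the fibration argument as the clean route and relegate the coordinate computation to a confirming remark.
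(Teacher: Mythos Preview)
Your proposal is correct and the ``clean route'' you identify at the end---that $L$ is a fibre of the holomorphic fibration $F^+|_{c^-_0}\to c^-_0$, so its normal bundle is the pullback of a single tangent space and hence trivial---is precisely the paper's argument: the corollary is stated without proof, justified only by the preceding sentence observing that these lines are fibres of $F^+|_{c^-_0}$ and $F^-|_{c^+_0}$. Your additional care about the gluing and the coordinate transition functions is sound but more than the paper provides; the point that $T^+$ sits as an open subset of $T$ (so the normal bundle computed in $T^+$ agrees with that in $T$) is the only thing worth making explicit, and you handle it correctly.
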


\section{Construction of a twistor space of a self-dual conformal $4$-manifold}\label{s4}
In this section we will show that there is a canonical construction of a $3$-dimensional twistor space $Z$ from the Cartan connection on $\Sigma$ such that $T$ arises as a canonical quotient of $Z$. We will use later the twistor lines to construct $\mathcal{O}(2)$ - curves on $T$. 
\subsection*{Decomposition of $\Lambda$}
In this paragraph we will show that the null line subbundle $\Lambda$ decomposes into a tensor product of pull backs of line bundles on $\Sgp$ and $\Sgm$.
\begin{lemma}\label{LemLambda}
The null line bundle $\Lambda$ decomposes into a tensor product of two line bundles $$\Lambda=\Lambda^+\ot\Lambda^-,$$
such that $\Lambda^+$ is trivial along curves from the family $C+$ and $\Lambda^-$ is trivial along curves from the family $C^-$.
\end{lemma}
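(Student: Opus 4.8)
The plan is to build $\Lambda^+$ and $\Lambda^-$ as pullbacks of line bundles from the leaf spaces $\Sgp$ and $\Sgm$ under the leaf projections $p^{+}\colon\Sigma\to\Sgp$ and $p^{-}\colon\Sigma\to\Sgm$. Since the fibres of $p^{+}$ (resp. $p^{-}$) are exactly the curves of the family $C^+$ (resp. $C^-$), any such pullback is automatically trivial along the corresponding family of curves; so the whole content is to produce line bundles on $\Sgp$ and $\Sgm$ whose pullbacks tensor to $\Lambda$.

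First I would record three bundle isomorphisms on $\Sigma$. From the exact sequence $0\to\Lambda\to U^{+}\to U^{+}/\Lambda\to 0$ one gets $\det U^{+}\cong\Lambda\ot(U^{+}/\Lambda)$, and the restriction of the isomorphism $\delta$ from the Cartan condition identifies $U^{+}/\Lambda\cong t^{+}\ot\Lambda$; hence $\det U^{+}\cong\Lambda^{\ot2}\ot t^{+}$, and symmetrically $\det U^{-}\cong\Lambda^{\ot2}\ot t^{-}$. Moreover $\langle\cdot,\cdot\rangle$ descends to a nondegenerate pairing on $\Lambda^{\perp}/\Lambda$ for which $U^{+}/\Lambda$ and $U^{-}/\Lambda$ are the two null lines; this gives $U^{-}/\Lambda\cong(U^{+}/\Lambda)^{*}$, equivalently $t^{+}\ot t^{-}\ot\Lambda^{\ot2}\cong\mathcal{O}$. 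Combining these, $\det U^{+}\ot\det U^{-}\cong\Lambda^{\ot4}\ot t^{+}\ot t^{-}\cong\Lambda^{\ot2}$.

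Next I would use Proposition \ref{lifts} to descend the determinants. Since $\mathcal{D}_{X}U^{+}\subseteq U^{+}$ for $X\in t^{+}$, the connection $\mathcal{D}$ restricts to a connection on $U^{+}$ along the $C^+$-curves, and hence via the trace to a partial connection on $\det U^{+}$ along the leaves of $C^+$. A partial connection along one-dimensional leaves is automatically flat, and because $\Sigma$ has been restricted so that $C^+$-curves are simply connected, its parallel transport has trivial monodromy; therefore $\det U^{+}$ descends to a line bundle $B^{+}$ on $\Sgp$ with $\det U^{+}\cong p^{+*}B^{+}$, and symmetrically $\det U^{-}\cong p^{-*}B^{-}$. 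Substituting into the identity above yields $\Lambda^{\ot2}\cong p^{+*}B^{+}\ot p^{-*}B^{-}$.

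Finally, after shrinking $\Sgp$ and $\Sgm$ so that they carry square roots $A^{+}=(B^{+})^{1/2}$ and $A^{-}=(B^{-})^{1/2}$, set $\Lambda^{+}:=p^{+*}A^{+}$ and $\Lambda^{-}:=p^{-*}A^{-}$. Then $\Lambda^{+}\ot\Lambda^{-}$ and $\Lambda$ have isomorphic squares, so they differ by a $2$-torsion line bundle, which is trivial on the simply connected $\Sigma$; hence $\Lambda\cong\Lambda^{+}\ot\Lambda^{-}$ with $\Lambda^{+}$ trivial along $C^+$-curves and $\Lambda^{-}$ trivial along $C^-$-curves. I expect the passage from $\Lambda^{\ot2}$ to $\Lambda$ — the existence of the square roots and the vanishing of the $2$-torsion ambiguity — to be the only delicate point, and it is precisely where the reductions of $\Sigma$, $\Sgp$ and $\Sgm$ to simply connected pieces are used; the descent of $\det U^{\pm}$ via Proposition \ref{lifts} is the geometric heart of the argument, while a check in the quadric model of Section \ref{s2} ($\Lambda=\mathcal{O}(-1,-1)$, $\det U^{+}=p^{+*}\mathcal{O}(-2)$) confirms that this construction returns the expected factors.
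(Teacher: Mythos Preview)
Your argument is correct, and it takes a genuinely different route from the paper's. The paper proves the lemma by invoking the spinor decomposition of a rank-$4$ orthogonal bundle: using the double cover $\SL(2,\mathbb{C})\times\SL(2,\mathbb{C})\to\SO(4,\mathbb{C})$ it writes $V\cong W\ot\hat W$ with $\Wedge^2W$ and $\Wedge^2\hat W$ trivial, identifies null vectors as pure tensors, and then sets $\Lambda^+=\langle w_0\rangle\subset W$ and $\Lambda^-=\langle\hat w_0\rangle\subset\hat W$ for the spinor factors of a generator $w_0\ot\hat w_0$ of $\Lambda$. In that picture $U^+=\Lambda^+\ot\hat W$, so the invariance $\mathcal{D}_{t^+}U^+\subseteq U^+$ from Proposition~\ref{lifts} immediately forces $\Lambda^+$ to be parallel along $C^+$-curves for the induced spin connection.

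Your approach bypasses spinors entirely: you compute $\det U^{\pm}$ in terms of $\Lambda$ and $t^{\pm}$, use the induced pairing on $\Lambda^{\perp}/\Lambda$ to eliminate $t^+\ot t^-$, and then descend $\det U^{\pm}$ to the leaf spaces via the partial connections furnished by Proposition~\ref{lifts}. The square root that the paper hides in the passage to the spin cover appears explicitly in your last step. What you gain is a more elementary argument that makes the descent mechanism transparent and pinpoints exactly where simple connectivity enters; what the paper's approach buys is a structural identification of $\Lambda^{\pm}$ as spinor line bundles (so that $U^+=\Lambda^+\ot\hat W$, $U^-=W\ot\Lambda^-$), which is conceptually cleaner for the later definition of $\tilde U^{\pm}=U^{\pm}\ot(\Lambda^{\pm})^{*\ot2}$. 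Note, incidentally, that in the paper's language your $\det U^+\cong(\Lambda^+)^{\ot2}\ot\Wedge^2\hat W\cong(\Lambda^+)^{\ot2}$, so the two constructions of $\Lambda^+$ agree.
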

\begin{proof}
Using the fact that $\SL(2,\mathbb{C})\times \SL(2,\mathbb{C})$ is a double cover of $\SO(4,\mathbb{C})$ we can find bundles $W$ and $\hat{W}$ such that
$\Wedge^2W$ and $\Wedge^2\hat{W}$ are trivial and
$$V\cong W\ot\hat{W}.$$
We can choose also non-vanishing sections $\epsilon$ of $\Wedge^2W$ and $\hat{\epsilon}$ of $\Wedge^2\hat{W}$ such that $\epsilon\ot\hat{\epsilon}=\langle\cdot,\cdot\rangle.$ Then we have that for $w_1,w_2\in W$ and $\hat{w}_1,\hat{w}_2\in \hat{W}$
$$\langle w_1\ot\hat{w}_1,w_2\ot\hat{w}_2\rangle=\epsilon(w_1,w_2)\hat{\epsilon}(\hat{w}_1,\hat{w}_2).$$
Thus
$$\langle w_1\ot\hat{w}_1,w_1\ot\hat{w}_2\rangle=0 \ \ \ \ \text{and} \ \ \ \  \langle w_1\ot\hat{w}_1,w_2\ot\hat{w}_1\rangle=0,$$
and hence bundles of the form $w\ot\hat{W}$ and $W\ot\hat{w}$ are null subbundles. 
Let $w_1,w_2$ be a basis for $W$ and $\hat{w}_1,\hat{w}_2$ be a basis for $\hat{W}$. Then $w_1\ot\hat{w}_1$, $w_1\ot\hat{w}_2$, $w_2\ot\hat{w}_1$, $w_2\ot\hat{w}_2$ is a basis of $W\ot\hat{W}$ and being a null vector implies that the determinant vanishes. This implies that null vectors are of the form $w\ot\hat{w}$, where $w\in W$ and $\hat{w}\in\hat{W}$.

As a result, the null subbundle $\Lambda\subseteq V$ must be of the form $$\Lambda=\spanu\{w_0\ot\hat{w}_0\},$$
for some $w_0\in W$ and $\hat{w}_0\in\hat{W}$.

Then $$\Lambda^{\perp}=w_0\ot\hat{W}+ W\ot\hat{w}_0.$$
In this setting $U^+=w_0\ot\hat{W}$ and $U^-=W\ot\hat{w}_0$ and 
using the Cartan condition $t^+\cong w_0\ot(\hat{W}/\langle\hat{w}_0\rangle)$ and $t^-\cong (W/\langle w_0\rangle)\ot\hat{w}_0$.

As a consequence $\Lambda^+:=\langle w_0\rangle\subset W$ is constant along $C^+$ curves and $\Lambda^-:=\langle \hat{w}_0\rangle\subset \hat{W}$ is constant along $C^-$ curves, which finishes the proof. 
\end{proof}

\subsection*{Construction of halves of a twistor space}

\begin{definition}
Define 
$$\tilde{U}^+:=U^+\ot(\Lap)^*\ot(\Lap)^*\ \ \ \text{ and }\ \ \ \tilde{U}^-:=U^-\ot(\Lam)^*\ot(\Lam)^*.$$
Observe that $\Lam\ot (\Lap)^*\subset \tilde{U}^+$ and $\Lap\ot (\Lam)^*\subset \tilde{U}^-$.
\end{definition}
From Proposition \ref{lifts} we obtain the following corollary.
\begin{wniosek}
Using the connection $\mathcal{D}$ we can horizontally lift $C^+$ curves to $\tilde{U}^+$ and $C^-$ curves to $\tilde{U}^-$.
\end{wniosek}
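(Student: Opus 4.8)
The plan is to observe that horizontally lifting a curve to a vector bundle requires only a connection \emph{along} that curve, and that $\tilde{U}^+$ differs from $U^+$ merely by tensoring with the square of the line bundle $(\Lap)^*$; so it suffices to equip each tensor factor with a connection in the $t^+$-direction and then take the tensor product connection. The corollary is therefore mostly a matter of assembling pieces already available from Proposition \ref{lifts} and Lemma \ref{LemLambda}.

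First I would recall from Proposition \ref{lifts} that $\mathcal{D}_X(U^+)\subseteq U^+$ for $X\in t^+$, so that $\mathcal{D}$ restricts to a partial connection on the subbundle $U^+$ in the $t^+$-direction, i.e.\ a connection on $U^+$ along every curve $c^+\in C^+$ (whose tangent is spanned by $t^+$). Next I would produce the corresponding connection on $\Lap$, and hence on its dual. Here I would invoke the decomposition $V\cong W\ot\hat{W}$ from the proof of Lemma \ref{LemLambda}, together with the induced connections on $W$ and $\hat{W}$ preserving $\epsilon$ and $\hat{\epsilon}$: since $\Lap=\langle w_0\rangle\subset W$ is constant along $C^+$ curves, the induced connection on $W$ preserves $\Lap$ in the $t^+$-direction, giving a connection on $\Lap$, and dually on $(\Lap)^*$, along the curves of $C^+$.

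Combining these, I would form the tensor product connection on $\tilde{U}^+=U^+\ot(\Lap)^*\ot(\Lap)^*$, which is again a connection along $C^+$ curves. Since parallel transport with respect to a connection along a curve defines a horizontal distribution over that curve, this yields the desired horizontal lifts of $C^+$ curves to $\tilde{U}^+$. The argument for $C^-$ curves and $\tilde{U}^-$ is entirely analogous, using $\mathcal{D}_Y(U^-)\subseteq U^-$ for $Y\in t^-$ and the triviality of $\Lam=\langle\hat{w}_0\rangle$ along $C^-$ curves.

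The only point requiring care, and the one I would regard as the main obstacle, is confirming that $\mathcal{D}$ genuinely descends to a connection on the line bundle $\Lap$ along $C^+$, equivalently that the induced connection on $W$ preserves the line $\langle w_0\rangle$ in the $t^+$-direction. This is essentially already contained in Lemma \ref{LemLambda}, where $\Lap$ was shown to be constant along $C^+$ curves; once this is granted, the remainder is the formal fact that a tensor product of connections along a curve is again such a connection and therefore admits horizontal lifts.
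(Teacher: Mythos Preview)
Your proposal is correct and is essentially a careful expansion of what the paper leaves implicit: the paper merely writes ``From Proposition \ref{lifts} we obtain the following corollary'' and gives no further argument, so you are supplying precisely the missing step, namely that one must also equip $(\Lap)^*$ with a partial connection in the $t^+$-direction (via Lemma \ref{LemLambda}) before forming the tensor product connection on $\tilde{U}^+$. Your identification of the only delicate point---that the induced connection on $W$ preserves $\langle w_0\rangle$ along $C^+$---is apt, and indeed follows from $\mathcal{D}_X(U^+)\subseteq U^+$ together with $U^+=\langle w_0\rangle\otimes\hat{W}$, so the argument goes through.
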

Similarly like for lifts of curves to $F$, locally in $\Sigma$, horizontal lifts of $C^+$ curves (and $C^-$ respectively) do not intersect and thus define foliations of $\tilde{U}^+$ and $\tilde{U}^-$.
\begin{definition}
Denote by $V^+$ the leaf space of horizontal lifts of $C^+$ curves to $\tilde{U}^+$. Denote by $V^-$ the leaf space  of horizontal lifts of $C^-$ curves to $\tilde{U}^-$.
\end{definition}
Analogously like for the spaces $T^+$ and $T^-$, we can prove that the spaces $V^+$ and $V^-$ are manifolds.

As $\tilde{U}^+$ arises from $U^+$ by tensoring by a line bundle we get that the projective bundle $\mathbb{P}(\tilde{U}^+)$ is equal to $\mathbb{P}(U^+)$. 
\begin{remark}\label{projU}
We have that
$\mathbb{P}(\tilde{U}^+)=F^+$ and $\mathbb{P}(\tilde{U}^-)=F^-$. 
\end{remark}

\subsection*{Gluing}

By applying the Proposition \ref{trans}, we obtain the following proposition.
\begin{proposition}
The non-zero curves from $V^+$ and $V^-$ are transversal to $\Lam\otimes(\Lap)^*$ and $\Lap\otimes(\Lam)^*$ respectively. \qed
\end{proposition}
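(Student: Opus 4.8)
The plan is to reduce the statement to Proposition~\ref{trans} by passing through the projectivization $\tilde{U}^+\to F^+$. First I would record the explicit shape of the bundles involved. From the proof of Lemma~\ref{LemLambda} we have $U^+=\Lap\ot\hat{W}$, so that
\[
\tilde{U}^+=U^+\ot(\Lap)^*\ot(\Lap)^*\cong\hat{W}\ot(\Lap)^*,
\]
and under this identification the line subbundle $L:=\Lam\ot(\Lap)^*$ is the image of $\Lam\subset\hat{W}$ after twisting by $(\Lap)^*$. Since $\Lambda=\Lap\ot\Lam$ sits inside $U^+=\Lap\ot\hat{W}$ as the line $\Lam\subset\hat{W}$, projectivizing and using Remark~\ref{projU} shows that the section $\mathbb{P}(L)$ of $F^+=\mathbb{P}(\tilde{U}^+)$ is precisely the section $\mathbb{P}(\Lambda)$. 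Thus the line subbundle to which transversality is asserted lies over the very section occurring in Proposition~\ref{trans}.

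Next I would relate the foliation defining $V^+$ to the one defining $T^+$. The linear connection $\mathcal{D}$ on $\tilde{U}^+$ induces the connection on $F^+=\mathbb{P}(\tilde{U}^+)$ used to build $T^+$, so the bundle projection $\pi\colon\tilde{U}^+\setminus\{0\}\to F^+$ carries horizontal lifts of $C^+$ curves in $\tilde{U}^+$ (the leaves of $V^+$) onto horizontal lifts in $F^+$ (the leaves of $T^+$). Because parallel transport for a linear connection is invertible, a non-zero horizontal lift $\gamma$ never meets the zero section; hence $\gamma$ lies entirely in $\tilde{U}^+\setminus\{0\}$ and its image $\pi\circ\gamma$ is an element of $T^+$. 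By the previous paragraph $L\setminus\{0\}=\pi^{-1}(\mathbb{P}(\Lambda))$.

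Finally I would invoke the submersion–transversality principle. The map $\pi$ is a $\mathbb{C}\mult$-fibration, and $\gamma$ covers the immersed base curve $c^+\subset\Sigma$, so $\gamma$ is nowhere tangent to the fibres of $\pi$ and $\pi\circ\gamma$ is an immersion. For such a curve, transversality of $\gamma$ to $L\setminus\{0\}=\pi^{-1}(\mathbb{P}(\Lambda))$ is equivalent to transversality of $\pi\circ\gamma$ to $\mathbb{P}(\Lambda)$, the equivalence being obtained by applying the surjection $d\pi$ to the transversality condition and using $T_p\bigl(\pi^{-1}(\mathbb{P}(\Lambda))\bigr)=(d\pi_p)^{-1}\bigl(T_{\pi(p)}\mathbb{P}(\Lambda)\bigr)$. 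The latter transversality holds by Proposition~\ref{trans}, since $\pi\circ\gamma$ is an element of $T^+$ meeting $\mathbb{P}(\Lambda)$. The statement for $V^-$ follows verbatim after exchanging the roles of $+$ and $-$. I expect the only delicate point to be the verification that $\gamma$ is transversal to the fibres of $\pi$, so that the two transversality conditions may be identified; this is exactly where the non-vanishing of the lift, together with the fact that it covers the curve $c^+$, is used.
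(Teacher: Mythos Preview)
Your proof is correct and is essentially the route the paper has in mind: the paper simply writes ``By applying Proposition~\ref{trans}'' and leaves the reduction implicit, whereas you spell out the passage through the projectivization $\pi\colon\tilde{U}^+\setminus\{0\}\to F^+=\mathbb{P}(\tilde{U}^+)$, identifying $\Lam\ot(\Lap)^*$ with $\pi^{-1}(\mathbb{P}(\Lambda))$ and pulling back transversality along the submersion. One minor remark: for the direction you actually need (transversality of $\pi\circ\gamma$ to $\mathbb{P}(\Lambda)$ implies transversality of $\gamma$ to $L\setminus\{0\}$) the hypothesis that $\gamma$ be transverse to the $\pi$-fibres is not required, since the $\pi$-fibre direction is already contained in $T_p(L\setminus\{0\})=(d\pi_p)^{-1}(T_{\pi(p)}\mathbb{P}(\Lambda))$; it is only used to ensure $\pi\circ\gamma$ is an immersed curve and hence a bona fide element of $T^+$.
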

As a consequence, locally in $\Sigma$, the non-zero curves from $V^+$  ($V^-$ respectively) intersect $\Lam\otimes(\Lap)^*$ ($\Lap\otimes(\Lam)^*$ respectively) in at most one point. Hence, through any point of $(\Lam\otimes(\Lap)^*)^{\mult}$ and $(\Lap\otimes(\Lam)^*)^{\mult}$ goes exactly one curve from $V^+$ or $V^-$. 

As $\Lap\ot(\Lam)^*=(\Lam\ot(\Lap)^*)^*$, we can define a gluing $\hat{Z}$ of $V^+$ and $V^-$ as follows:
 $$\hat{Z}:=V^+\bigsqcup_{\sim}  V^-,$$
 where for $v^+\in V^+$ and $v^-\in V^-$ we have $v^+\sim v^-$ if $v^+\cap (\Lam\ot(\Lap)^*)=(v^-\cap(\Lap\ot(\Lam)^*))^*$.
 
Unfortunately, $\hat{Z}$ does not need to be Hausdorff. Later we will show how to choose open subsets of $V^+$ and $V^-$ such that the manifold $Z\subset\hat{Z}$ obtained by the gluing of them is Hausdorff.

\subsection*{Coordinate description}
In this paragraph we will discuss how the gluing region of $\hat{Z}$ looks like in more detail. We will also construct local coordinates on the gluing parts of $V^+$ and $V^-$.

Firstly observe that, analogously as in Proposition \ref{31}, we have another description of manifolds $V^+$ and $V^-$.

\begin{proposition}
After possible restriction of $\Sigma$, the manifold $V^+$ can be identified with $\tilde{U}^+|_{c^-_0}$ and the manifold $V^-$ can be identified with $\tilde{U}^-|{c_0^+}$.\qed
\end{proposition}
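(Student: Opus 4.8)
The plan is to mirror the argument of Proposition~\ref{31} essentially verbatim, replacing the projective fibre bundles $F^\pm$ by their affine counterparts $\tilde{U}^\pm$ and keeping track of the fact that the tensoring by $(\Lambda^{\pm})^*\otimes(\Lambda^{\pm})^*$ changes the fibres from projective lines to rank-$2$ vector spaces but does not affect the leaf-space structure coming from the horizontal lifts. First I would recall that $V^+$ is by construction the leaf space of the foliation of $\tilde{U}^+$ by horizontal lifts of $C^+$ curves, and that each such lift is a section of $\tilde{U}^+$ restricted to a single $C^+$ curve. The key observation to exploit is that a horizontal lift is uniquely determined by its value over any one chosen point of the base curve, so if I fix a transversal to the $C^+$ family I obtain a bijective correspondence between leaves and points of $\tilde{U}^+$ over that transversal.

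Concretely, I would take the fixed curve $c^-_0\in C^-$, which (after the $\pm$-convexity restriction of Remark~\ref{remrest}) meets every curve from the $C^+$ family in exactly one point. Then I would argue that the restriction $\tilde{U}^+|_{c^-_0}$ is a complete transversal slice to the foliation of $\tilde{U}^+$: since each $C^+$-curve crosses $c^-_0$ once, and a horizontal lift of that curve hits the fibre of $\tilde{U}^+$ over the intersection point in precisely one point (the lift being a section), every leaf of the foliation meets $\tilde{U}^+|_{c^-_0}$ in exactly one point. This gives a natural identification of the leaf space $V^+$ with $\tilde{U}^+|_{c^-_0}$. The argument for $V^-$ and $\tilde{U}^-|_{c^+_0}$ is identical after interchanging the roles of the two families, using that $c^+_0$ meets every $C^-$ curve once.

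The step I expect to be the main obstacle, or at least the one requiring the most care, is verifying that $\tilde{U}^+|_{c^-_0}$ really is a \emph{global} transversal and not merely a local one, so that the identification is a genuine manifold isomorphism rather than a local diffeomorphism. This is precisely what the phrase ``after possible restriction of $\Sigma$'' is there to secure: just as in the passage preceding Proposition~\ref{31}, one restricts $\Sigma$ to a region small enough that every lift of a $C^+$ curve meets the slice $\tilde{U}^+|_{c^-_0}$ at most once, while $\pm$-convexity guarantees it meets it at least once. Granting that restriction, the transversality of the lifts to the fibres (which follows from the horizontal-lift construction, exactly as the curves in Proposition~\ref{lines} are transversal to the fibres of $F^\pm$) upgrades the set-theoretic bijection to a biholomorphism, and the proposition follows. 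Since this is an immediate transcription of the already-established Proposition~\ref{31} to the linearised setting, I would expect the proof to be short and to end with a \qed after the identification is stated.
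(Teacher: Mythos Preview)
Your proposal is correct and matches the paper's approach exactly: the paper leaves this proposition without proof (it carries only a \qed), relying on the sentence just before it that the description is obtained ``analogously as in Proposition~\ref{31}'', and you have simply spelled out that analogy in detail. Your identification of the need for $\pm$-convexity and the possible restriction of $\Sigma$ to make the transversal slice global is precisely what the paper's qualifier ``after possible restriction of $\Sigma$'' is covering.
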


Recall that in Proposition \ref{31} we identified $T^+$ with $F^+|_{c_0^-}$ and $T^-$ with $F^-|_{c_0^+}$. The gluing region $\mathbb{P}(\Lambda)$ was a union of non-empty open subsets of fibres of $F^+|_{c_0^-}$ and $F^-|_{c_0^+}$ respectively. 
\begin{definition}
Denote by $\tilde{W}^+$ the gluing region in $F^+|_{c_0^-}$ and set $\tilde{W}^+_z:=F^+_z\cap\tilde{W}^+$. Denote by $\tilde{W}^-$ the gluing region in $F^-|_{c_0^+}$ and set $\tilde{W}^-_z:=F^-_{\tilde{z}}\cap\tilde{W}^-.$  
\end{definition}

\begin{definition} \label{taut}
For any vector bundle $\mathcal{A}$ over a manifold $M$ we can define a bundle $\Delta_{\mathcal{A}}$ on the total space of the projective bundle $\mathbb{P}(\mathcal{A})$ by requiring that $(\Delta_{\mathcal{A}})_{l_x}=l_x$, where $l_x$ is a $1$-dimensional subspace in $\mathcal{A}_x$. $\Delta_{\mathcal{A}}$ will be called the tautological bundle along the fibres corresponding to a vector bundle $\mathcal{A}$ and, after restriction to any fibre of $\mathcal{P}$, it is isomorphic to the tautological bundle $\mathcal{O}(-1)$.
\end{definition}

Observe that the bundles $\tilde{U}^+$ and $\tilde{U}^-$ can be naturally identified with the blow-down of $\Delta_{\tilde{U}^+}$ and $\Delta_{\tilde{U}^-}$ - the corresponding tautological bundles along the fibres over $F^+$ and $F^-$ respectively. 
\begin{definition}\label{defW+}
Denote by $W^+\subset \tilde{U}^+$ the image by blow-down of $\Delta_{\tilde{U}^+}|_{\tilde{W}^+}$.
Denote by $W^-\subset \tilde{U}^-$ the image by blow-down of $\Delta_{\tilde{U}^-}|_{\tilde{W}^-}$.
\end{definition}
\begin{observation}
$W^+\setminus\underline{0}$ is the gluing region of $V^+=\tilde{U}^+|_{c^-_0}$. $W^-\setminus\underline{0}$ is the gluing region of $V^-=\tilde{U}^-|_{c^+_0}$.
\end{observation}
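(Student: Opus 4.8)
The plan is to match two descriptions of the gluing region of $V^+$: the leaf-theoretic one coming from the definition of $\hat{Z}$, and the one packaged into the blow-down construction of $W^+$. By construction the gluing region of $V^+$ consists of those non-zero leaves $v^+$ which meet $(\Lam\ot(\Lap)^*)^{\mult}$ and whose image in $T^+$ lies in the already-chosen gluing region $\tilde{W}^+$, this last restriction being exactly what later guarantees that $Z$ is Hausdorff. First I would record the projection tying $V^+$ to $T^+$: under the identifications $V^+\cong\tilde{U}^+|_{c^-_0}$ and $T^+\cong F^+|_{c^-_0}$, the fibrewise projectivisation $\tilde{U}^+\setminus\underline{0}\to\mathbb{P}(\tilde{U}^+)=F^+$ sends a non-zero vector $v$ to its class $[v]$, and this intertwines the leaf-space projection $V^+\to T^+$ with $v\mapsto[v]$. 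Indeed a leaf $v^+$ meets the fibre over $c^-_0$ in a single vector $v$, so its projectivisation $\bar{v}^+=l^+$ meets $F^+|_{c^-_0}$ precisely in $[v]$.

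Next I would show that a non-zero leaf $v^+$ meets $(\Lam\ot(\Lap)^*)^{\mult}$ if and only if $\bar{v}^+$ meets $\mathbb{P}(\Lambda)$. Since $\Lam\ot(\Lap)^*=\Lambda\ot(\Lap)^*\ot(\Lap)^*$ differs from $\Lambda$ by tensoring with a line bundle, we have $\mathbb{P}(\Lam\ot(\Lap)^*)=\mathbb{P}(\Lambda)$ as the same section of $F^+$, so the projectivisation of the line subbundle carrying the intersection point is exactly $\mathbb{P}(\Lambda)$. Because $\mathcal{D}$ is linear, the horizontal lift of a non-zero vector stays non-zero, whence the crossing value $v^+(z_1)$ is a non-zero vector of $\Lam\ot(\Lap)^*$ over the crossing point $z_1$, and conversely any crossing of $\bar{v}^+$ with $\mathbb{P}(\Lambda)$ lifts to a crossing of $v^+$ with $(\Lam\ot(\Lap)^*)^{\mult}$. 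Combined with the transversality and the uniqueness of intersection already established for the curves of $V^+$, this yields that the gluing region of $V^+$ equals $\{v\neq 0\ :\ [v]\in\tilde{W}^+\}\subset\tilde{U}^+|_{c^-_0}$.

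Finally I would unwind the definition of $W^+$. Over a point $q\in\tilde{W}^+\subset F^+|_{c^-_0}=\mathbb{P}(\tilde{U}^+)|_{c^-_0}$ the fibre of $\Delta_{\tilde{U}^+}$ is the tautological line $\ell_q\subset\tilde{U}^+_z$, and the blow-down maps this fibre isomorphically onto $\ell_q$, collapsing only the zero vector to the zero section. Therefore $W^+=\bigcup_{q\in\tilde{W}^+}\ell_q=\underline{0}\cup\{v\neq 0\ :\ [v]\in\tilde{W}^+\}$, so $W^+\setminus\underline{0}=\{v\neq 0\ :\ [v]\in\tilde{W}^+\}$. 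Comparing with the previous paragraph identifies $W^+\setminus\underline{0}$ with the gluing region of $V^+$; the argument for $W^-$ and $V^-$ is identical after exchanging $+$ and $-$ and using $\Lap\ot(\Lam)^*=(\Lam\ot(\Lap)^*)^*$.

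The step I expect to be the main obstacle is the precise matching in the second paragraph: verifying that the leaf-level intersection with $(\Lam\ot(\Lap)^*)^{\mult}$ and the projective-level crossing with $\mathbb{P}(\Lambda)$ carry the same information, and that scaling a crossing vector by $\mathbb{C}^{\mult}$ corresponds to moving along the tautological line $\ell_q$ without leaving the gluing region. This is exactly where one must use the $\mathbb{C}^{\mult}$-equivariance of the whole picture, so that the saturation $\bigcup_{q}\ell_q$ appearing in the blow-down description coincides with the saturation of the leaf gluing.
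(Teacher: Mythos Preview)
The paper states this Observation without proof, treating it as immediate from the definitions of $\tilde{W}^+$, $\Delta_{\tilde{U}^+}$, and the blow-down. Your argument correctly supplies the omitted details: the identification of the gluing region of $V^+$ with $\{v\neq 0:[v]\in\tilde{W}^+\}$ via the compatibility of projectivisation with horizontal lifts, and then the unwinding of the blow-down to see that this set is exactly $W^+\setminus\underline{0}$. The step you flag as the potential obstacle is in fact automatic: since $\mathcal{D}$ is a linear connection, horizontal lifts to $\tilde{U}^+$ are $\mathbb{C}^{\mult}$-equivariant, so projectivisation sends leaves of $V^+$ to leaves of $T^+$ and the crossing conditions with $(\Lam\ot(\Lap)^*)^{\mult}$ and $\mathbb{P}(\Lambda)$ match on the nose. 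One small inaccuracy: the restriction to $\tilde{W}^+$ is not what ``later guarantees that $Z$ is Hausdorff''---Hausdorffness comes from the separate choice of tubular neighbourhoods $B^{\pm}$ in Definition~\ref{twist4}; here $\tilde{W}^+$ is simply the full gluing region of $T^+$ already fixed in Section~\ref{s3}. This does not affect the validity of your proof.
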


Using the properties of blow-down we obtain the following corollary.
\begin{wniosek}\label{wniosekcones}
For any $z\in c_0^-$, $\tilde{z}\in c_0^+$ we have that $W^+_z$ and $W^-_{\tilde{z}}$ are cones in $(\tilde{U}^+)_z$ and $(\tilde{U}^-)_{\tilde{z}}$ with vertex at $0$.
\end{wniosek}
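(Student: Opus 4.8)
The plan is to reduce the statement to the elementary fact that blowing down the tautological bundle along the fibres, restricted to a projective subset of a single fibre, returns exactly the affine cone over that subset. First I would fix $z\in c_0^-$ and recall from Definition \ref{defW+} that $W^+$ is the image under blow-down of $\Delta_{\tilde{U}^+}|_{\tilde{W}^+}$; restricting to the fibre over $z$, this says that $W^+_z$ is the image of $\Delta_{\tilde{U}^+}|_{\tilde{W}^+_z}$ under blow-down. Here $\tilde{W}^+_z=F^+_z\cap\tilde{W}^+$ is a subset of the fibre $F^+_z=\mathbb{P}(U^+_z)=\mathbb{P}((\tilde{U}^+)_z)$, where the last identification uses Remark \ref{projU}.

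Next I would observe that the projectivisation $\mathbb{P}(\tilde{U}^+)=F^+$, the tautological bundle $\Delta_{\tilde{U}^+}$, and the associated blow-down $\Delta_{\tilde{U}^+}\to\tilde{U}^+$ are all constructed fibrewise over $\Sigma$, and hence commute with restriction to the single fibre $(\tilde{U}^+)_z$. Concretely, over $\mathbb{P}((\tilde{U}^+)_z)\cong\mathbb{CP}^1$ the bundle $\Delta_{\tilde{U}^+}$ restricts to the tautological line bundle $\mathcal{O}(-1)$, whose total space is the blow-up of $(\tilde{U}^+)_z\cong\mathbb{C}^2$ at the origin, and the blow-down map collapses the exceptional line to $0$.

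By Definition \ref{taut}, for each line $l\in\tilde{W}^+_z$ the fibre $(\Delta_{\tilde{U}^+})_l$ equals $l$ itself, a $1$-dimensional subspace of $(\tilde{U}^+)_z$, and the blow-down restricted to this fibre is just the inclusion $l\hookrightarrow(\tilde{U}^+)_z$. Consequently
$$W^+_z=\bigcup_{l\in\tilde{W}^+_z}l,$$
a union of lines through the origin of $(\tilde{U}^+)_z$. Since any union of $1$-dimensional linear subspaces is invariant under scalar multiplication, $W^+_z$ is a cone with vertex at $0$, namely the affine cone over $\tilde{W}^+_z$. The identical argument, with $F^-$, $\tilde{U}^-$ and $\tilde{W}^-_{\tilde{z}}$ in place of their $+$ counterparts, yields the claim for $W^-_{\tilde{z}}$.

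I do not anticipate a serious obstacle: the only point demanding care is the fibrewise compatibility invoked in the second step, i.e.\ that restricting the global blow-down (defined over $c_0^-$) to the fibre over $z$ agrees with the blow-down of the tautological bundle of the single vector space $(\tilde{U}^+)_z$. This holds because every construction involved is defined pointwise on the base $\Sigma$, so taking fibres commutes with projectivisation and blow-down.
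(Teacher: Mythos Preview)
Your argument is correct and is exactly what the paper has in mind: the paper gives no proof at all, simply stating the corollary as an immediate consequence of ``the properties of blow-down'', and you have spelled out precisely those properties---namely that the fibrewise blow-down of the tautological bundle restricted to a projective subset returns the affine cone over that subset. There is nothing to add.
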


\begin{remark}
By properties of the blow down and of the tautological bundle, $\Delta_{\tilde{U}^+}|_{\tilde{W}^+}\setminus\underline{0}$ is isomorphic with $W^+\setminus \underline{0}$. As a consequence we have a one-to-one correspondence between points of $\Delta_{\tilde{U}^+}|_{\tilde{W}^+}\setminus\underline{0}$ and elements of the gluing part of $V^+$.
\end{remark}

Recall that in Section \ref{s3} we constructed coordinates $(z,w)$ and $(\tilde{w},\tilde{z})$ on gluing regions $\tilde{W}^+$ of $T^+$ and $\tilde{W}^-$ of $T^-$. Using them, we will now construct local coordinates on $W^+$ and $W^-$.

 Let $(z,\tilde{z},\lambda)$ be coordinates on local trivialisation of $\Lam\ot(\Lap)^*$ such that $(z,\tilde{z})$ are coordinates on the base $\Sigma$ and $\lambda$ is a coordinate in the direction of fibres.
 Recall that the coordinates $(z,w)$ on $\tilde{W}^+$ were induced from coordinates $(z,\tilde{z})$ on $\Sigma$ by the intersection point of a curve $s^+\in\tilde{W}^+$ with $\Lambda$. We define coordinates $(z,w,\lambda)$ on $\Delta_{\tilde{U}^+}|_{\tilde{W}^+}\setminus\underline{0}$ in the analogous way (using intersection points of the corresponding lifts of curves from $V^+$ with $\Lam\ot(\Lap)^*$).

Analogously choosing coordinates $(z,\tilde{z},\tilde{\lambda})$ on local trivialisation of $\Lap\ot(\Lam)^*$ we define coordinates $(\tilde{w},\tilde{z},\tilde{\lambda})$ on $\Delta_{\tilde{U}^-}|_{\tilde{W}^-}$. We can require that $\tilde{\lambda}$ is a dual coordinate to $\lambda$ i.e., that if $l\in \Lam\ot(\Lap)^*$ is given in the coordinates $(z,\tilde{z},\lambda)$ by $(a,b,l_0)$ then the point $l^*\in \Lap\ot(\Lam)^*$ is given in coordinates 
 $(z,\tilde{z},\tilde{\lambda})$ by $(a,b,l_0^{-1})$. 
 
From the properties of blow-down we obtain the following proposition.
\begin{proposition}\label{38}
The local coordinates $(z,w,\lambda)$ on $\Delta_{\tilde{U}^+}|_{\tilde{W}^+}$ induce coordinates $(z,\lambda w,\lambda)$ on $(W^+\setminus \underline{0})\subset \tilde{U}^+|_{c^-_0}$ such that they extend (by $\lambda w=0$) continuously through $\underline{0}\subset \tilde{U}^+|_{c^-_0}$.

The local coordinates $(\tilde{w},\tilde{z},\tilde{\lambda})$ on $\Delta_{\tilde{U}^-}|_{\tilde{W}^-}$ induce coordinates $(\tilde{\lambda}\tilde{w},\tilde{z},\tilde{\lambda})$ on $(W^-\setminus \underline{0})\subset \tilde{U}^-|_{c^+_0}$ such that they extend (by $\tilde{\lambda}\tilde{w}=0$) continuously through $\underline{0}\subset \tilde{U}^-|_{c^+_0}$.
\end{proposition}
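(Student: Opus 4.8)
The plan is to prove Proposition~\ref{38} by working through the blow-down construction in the chosen local coordinates. The key point is that the coordinate $w$ on $\tilde{W}^+$ was defined as an \emph{affine} coordinate along the fibres of $F^+=\mathbb{P}(\tilde{U}^+)$ (parametrising projective lines by their intersection point with $\mathbb{P}(\Lambda)$), while $\lambda$ is a fibre coordinate on the line bundle $\Lam\ot(\Lap)^*\subset\tilde{U}^+$. Since $\Delta_{\tilde{U}^+}$ is the tautological bundle along the fibres of $\mathbb{P}(\tilde{U}^+)$, a point of $\Delta_{\tilde{U}^+}|_{\tilde{W}^+}$ over the projective point $[w]\in F^+_z$ is an actual vector in the line $\ell_{[w]}\subset (\tilde{U}^+)_z$ that this projective point represents, and its length is recorded by $\lambda$.

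First I would fix a fibre $(\tilde{U}^+)_z$ and write down an explicit frame. By Lemma~\ref{LemLambda} we have $U^+=w_0\ot\hat{W}$, so a frame for $\tilde{U}^+$ over a curve is given by $w_0\ot\hat{w}_0$ (spanning $\Lambda\cong\Lam\ot\Lap$ direction, i.e.\ the $\Lam\ot(\Lap)^*$ summand after the twist) together with a second vector $w_0\ot\hat{w}_1$ transversal to it. In this frame a projective point of $F^+_z$ away from $\mathbb{P}(\Lambda)_z$ is $[1:w]$ in homogeneous coordinates adapted so that $w$ is precisely the affine coordinate used to define $\tilde{W}^+$, and $\mathbb{P}(\Lambda)_z$ itself is $[1:0]$. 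The tautological line over $[1:w]$ consists of the vectors $\lambda(e_0+w\,e_1)$ for $\lambda\in\mathbb{C}$, where $e_0$ is the frame vector along $\Lam\ot(\Lap)^*$; here $\lambda$ is exactly the fibre coordinate on $\Lam\ot(\Lap)^*$ because $e_0$ spans that subbundle. Blowing down means forgetting the projective label and remembering only the underlying vector, so the image of the point with coordinates $(z,w,\lambda)$ is the vector in $(\tilde{U}^+)_z$ whose components in the frame $(e_0,e_1)$ are $(\lambda,\lambda w)$. Reading off the induced coordinates on $\tilde{U}^+|_{c^-_0}$ this is exactly $(z,\lambda w,\lambda)$, as claimed.

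Next I would check the extension across the zero section. As $\lambda\to 0$ with $(z,w)$ fixed, the image vector $(\lambda,\lambda w)$ tends to the zero vector, and both coordinates $\lambda$ and $\lambda w$ tend to $0$; so setting $\lambda w=0$ (together with $\lambda=0$) gives a continuous extension of the coordinate map through $\underline{0}\subset\tilde{U}^+|_{c^-_0}$. This is the precise sense in which the blow-down collapses $\mathbb{P}(\Lambda)\times\{\text{fibre direction}\}$ to the zero section, consistent with Corollary~\ref{wniosekcones} describing $W^+_z$ as a cone with vertex at $0$. The statement for $\tilde{U}^-$ is verbatim the same computation with the dual frame; using that $\tilde\lambda$ was chosen dual to $\lambda$ (so $\tilde\lambda=\lambda^{-1}$ on matching points) and that $\tilde w$ is the affine coordinate on $F^-$, the blow-down sends $(\tilde w,\tilde z,\tilde\lambda)$ to the vector with coordinates $(\tilde\lambda\tilde w,\tilde z,\tilde\lambda)$ in $\tilde{U}^-|_{c^+_0}$, again extending continuously by $\tilde\lambda\tilde w=0$ through the zero section.

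The main obstacle I anticipate is purely bookkeeping: one must verify that the $\lambda$ appearing as the \emph{tautological} scaling parameter genuinely coincides with the $\lambda$ used as the fibre coordinate on $\Lam\ot(\Lap)^*$, i.e.\ that the frame vector $e_0$ singled out by the tautological construction over $\mathbb{P}(\Lambda)_z$ is the same section of $\Lam\ot(\Lap)^*$ used to trivialise that line bundle. This is guaranteed by the inclusion $\Lam\ot(\Lap)^*\subset\tilde{U}^+$ recorded after the definition of $\tilde U^\pm$ together with the fact that $\mathbb{P}(\Lambda)_z$ is the point $[1:0]$; but it requires care to align the three coordinate conventions (the affine coordinate $w$ from Section~\ref{s3}, the projective coordinate on $F^+$, and the linear fibre coordinate $\lambda$) so that no stray multiplicative factor appears. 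Everything else is the standard local description of a blow-down, and once the frames are matched the two coordinate formulas drop out immediately.
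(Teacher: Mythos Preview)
Your argument is correct and follows the same underlying approach as the paper's proof, namely the standard local description of the blow-down map applied to the tautological line bundle; you have simply filled in the frame computation that the paper leaves implicit when it invokes ``properties of the blow-down'' and the cone structure from Corollary~\ref{wniosekcones}. The paper's justification of the continuous extension (``$\lambda\to 0$ and $w$ bounded'') is exactly your limit computation $\lambda(e_0+w e_1)\to 0$, so there is no substantive difference in strategy.
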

 \begin{proof}
 Recall that in Corollary \ref{wniosekcones} we showed that $W^+$ is a cone subbundle of $\tilde{U}^+$. $W^+$ is the image of a blow-down of $\Delta_{\tilde{U}^+}|_{\tilde{W}^+}$ and the blow-down map after restriction to $\Delta_{\tilde{U}^+}|_{\tilde{W}^+}\setminus\underline{0}$ is an isomorphism on $W^+\setminus\underline{0}$, thus the coordinates are well defined.
Moreover $\lambda$ tends to $0$ as we approach the zero section and $w$ is bounded (at least locally), which completes the proof.
 \end{proof}
 By construction, we obtain the following coordinate description of the gluing.
 \begin{proposition}\label{43}
 The gluing of $W^+\setminus\underline{0}$ with $W^-\setminus\underline{0}$ is given by $z=\tilde{w}$, $w=\tilde{z}$ and $\lambda=\tilde{\lambda}^{-1}$.\qed
 \end{proposition}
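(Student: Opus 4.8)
The plan is to unravel the definition of the gluing relation on $\hat{Z}$ directly in the coordinates $(z,w,\lambda)$ on $\Delta_{\tilde{U}^+}|_{\tilde{W}^+}\setminus\underline{0}$ and $(\tilde{w},\tilde{z},\tilde{\lambda})$ on $\Delta_{\tilde{U}^-}|_{\tilde{W}^-}\setminus\underline{0}$. By the blow-down isomorphisms these are also valid coordinate systems on the gluing regions $W^+\setminus\underline{0}$ and $W^-\setminus\underline{0}$ (this is exactly the identification underlying Proposition~\ref{38}), so the statement to be proved concerns precisely these coordinates. Since by definition $v^+\sim v^-$ exactly when $v^+\cap(\Lam\ot(\Lap)^*)=(v^-\cap(\Lap\ot(\Lam)^*))^*$, the whole argument reduces to computing these two intersection points in the chosen trivialisations and comparing them slot by slot.

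First I would read off the two intersection points from the way the coordinates were set up. A point of $V^+$ with coordinates $(z,w,\lambda)$ is the horizontal lift of a $C^+$-curve whose intersection with $\Lam\ot(\Lap)^*\subset\tilde{U}^+$ sits over the point of $\Sigma$ with base coordinates $(z,\tilde{z})=(z,w)$ and has fibre coordinate $\lambda$; hence in the trivialisation $(z,\tilde{z},\lambda)$ of $\Lam\ot(\Lap)^*$ the point $v^+\cap(\Lam\ot(\Lap)^*)$ has base value $(z,w)$ and fibre value $\lambda$. Symmetrically, a point of $V^-$ with coordinates $(\tilde{w},\tilde{z},\tilde{\lambda})$ meets $\Lap\ot(\Lam)^*$ over the base point $(z,\tilde{z})=(\tilde{w},\tilde{z})$ with fibre coordinate $\tilde{\lambda}$, so in the trivialisation $(z,\tilde{z},\tilde{\lambda})$ of $\Lap\ot(\Lam)^*$ the point $v^-\cap(\Lap\ot(\Lam)^*)$ has base value $(\tilde{w},\tilde{z})$ and fibre value $\tilde{\lambda}$.

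Next I would apply the duality. Because $\Lap\ot(\Lam)^*=(\Lam\ot(\Lap)^*)^*$ and $\tilde{\lambda}$ was chosen to be the coordinate dual to $\lambda$, so that a point $(a,b,l_0)$ of $\Lap\ot(\Lam)^*$ has dual $(a,b,l_0^{-1})$ in $\Lam\ot(\Lap)^*$, the point $(v^-\cap(\Lap\ot(\Lam)^*))^*$ has base value $(\tilde{w},\tilde{z})$ and fibre value $\tilde{\lambda}^{-1}$ in the trivialisation $(z,\tilde{z},\lambda)$. Equating it with $v^+\cap(\Lam\ot(\Lap)^*)$ forces the base values to coincide, giving $z=\tilde{w}$ and $w=\tilde{z}$, and the fibre values to coincide, giving $\lambda=\tilde{\lambda}^{-1}$, which is exactly the claimed gluing. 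As a sanity check, the base-point identification $z=\tilde{w}$, $w=\tilde{z}$ is precisely the gluing of $T^+$ with $T^-$ recorded in Proposition~\ref{gluingcoordinates}, as it must be, since $\hat{Z}$ projects fibrewise onto $T$.

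I expect no real obstacle, since the statement is a direct bookkeeping consequence of the coordinate conventions already fixed; the points demanding care are keeping the two trivialisations $(z,\tilde{z},\lambda)$ of $\Lam\ot(\Lap)^*$ and $(z,\tilde{z},\tilde{\lambda})$ of $\Lap\ot(\Lam)^*$ apart, applying the duality convention $\lambda\leftrightarrow\tilde{\lambda}^{-1}$ in the correct direction, and verifying that the base value of $v^+\cap(\Lam\ot(\Lap)^*)$ is $(z,w)$ rather than $(z,\tilde{z})$ — which is forced by the coordinate description of $\tilde{W}^+$ and $\tilde{W}^-$ from Section~\ref{s3}.
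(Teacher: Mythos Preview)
Your argument is correct and is essentially the same as the paper's: the paper states this proposition ``by construction'' and gives no further proof, and what you have done is simply to make explicit the bookkeeping in the coordinate conventions for $(z,w,\lambda)$, $(\tilde{w},\tilde{z},\tilde{\lambda})$ and the duality $\lambda\leftrightarrow\tilde{\lambda}^{-1}$ that the paper leaves implicit. There is nothing to add.
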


\subsection*{Hausdorffness}
In this paragraph we will show how to choose open subsets of $V^+$ and $V^-$  such that the manifold obtained after gluing is Hausdorff.

The idea is as follows. We choose a tubular neighbourhoods $B^+$ of the zero section in $\tilde{U}^+|_{c^-_0}$ and $B^-$ of the zero section in $\tilde{U}^-|_{c^+_0}$. We can choose $B^+$ and $B^-$ such that the intersection of the image in $\tilde{U}^-|_{c^+_0}$ by the gluing of $B^+\cap (W^+\setminus\underline{0})$ with $B^-$ is empty.

\begin{definition}\label{twist4}
Define $Z^+:=B^+\cup W^+$ and $Z^-:=B^-\cup W^-$ and
$$Z:=Z^+\bigsqcup_{\sim}  Z^-,$$
 where for $v^+\in W^+\setminus\underline{0}$ and $v^-\in W^-\setminus\underline{0}$ we have $v^+\sim v^-$ iff $v^+\cap (\Lam\ot(\Lap)^*)=(v^-\cap(\Lap\ot(\Lam)^*))^*$.
\end{definition}

\begin{proposition}
The space $Z$ defined above is a Hausdorff manifold.
\end{proposition}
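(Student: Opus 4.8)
The plan is to show that $Z$ is Hausdorff by reducing the verification to the only place where the gluing could create non-separated points, namely the boundary of the gluing region, and then to show that the choices of $B^+$ and $B^-$ made just before Definition \ref{twist4} have eliminated precisely this problem. First I would note that $Z^+=B^+\cup W^+$ and $Z^-=B^-\cup W^-$ are each manifolds (being open subsets of the manifolds $\tilde{U}^+|_{c^-_0}$ and $\tilde{U}^-|_{c^+_0}$, together with the zero sections which are included), so Hausdorffness inside each piece is automatic. Two points that both lie in $Z^+$, or both in $Z^-$, are therefore separated by disjoint open sets inside that single chart. The only potential failure of the Hausdorff property is a pair $(v^+,v^-)$ with $v^+\in Z^+$ and $v^-\in Z^-$ that are not identified under $\sim$ but every neighbourhood of $v^+$ meets every neighbourhood of $v^-$; such a pair would have to be a limit, from the gluing region $W^+\setminus\underline{0}\cong W^-\setminus\underline{0}$, approaching the non-glued parts of $Z^+$ and $Z^-$ simultaneously.

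Next I would localise this. A non-separated pair can only arise at the topological boundary of the gluing locus, i.e.\ as a sequence $v_n\in W^+\setminus\underline{0}$ whose images in $Z^+$ converge to some $v^+$ while the images under the gluing map in $Z^-$ converge to some $v^-$, with $v^+\not\sim v^-$. By the coordinate description in Proposition \ref{43}, the gluing is $z=\tilde{w}$, $w=\tilde{z}$, $\lambda=\tilde{\lambda}^{-1}$, so $\lambda\to 0$ in $Z^+$ forces $\tilde\lambda\to\infty$ in $Z^-$ and vice versa. Thus a sequence approaching the zero section $\underline{0}\subset \tilde{U}^+|_{c^-_0}$ on the $+$ side runs off to infinity in the fibre direction on the $-$ side, and cannot converge there; this is exactly the mechanism that would have made the unrestricted $\hat{Z}$ non-Hausdorff, and it is harmless once we pass to $Z$. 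The remaining danger is a sequence that stays in a bounded part of the fibre on both sides while escaping the gluing region in the base or in the $w,\tilde{w}$ directions, and this is controlled by the choice of tubular neighbourhoods.

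The crux of the argument is therefore the hypothesis recorded immediately before Definition \ref{twist4}: $B^+$ and $B^-$ were chosen so that the intersection in $\tilde{U}^-|_{c^+_0}$ of the gluing-image of $B^+\cap(W^+\setminus\underline{0})$ with $B^-$ is empty. I would invoke this directly: if $v^+\in Z^+$ and $v^-\in Z^-$ were a non-separated pair, then $v^-$ would lie in the closure of the gluing-image of $B^+\cap(W^+\setminus\underline{0})$ inside $B^-$, while $v^+$ would lie in the closure on the $+$ side; but the emptiness of the relevant intersection, together with the fact that the zero sections $\underline{0}^+$ and $\underline{0}^-$ are not in the gluing region and are separated by the tubular neighbourhoods, precludes any such limit from existing without $v^+$ and $v^-$ being genuinely identified. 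Hence every pair of distinct points of $Z$ admits disjoint neighbourhoods, and $Z$ is Hausdorff.

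I expect the main obstacle to be making rigorous the claim that the \emph{only} obstructions to Hausdorffness live in this boundary region, and in particular ruling out a subtle limit in which points stay at finite fibre coordinate on both sides. The clean way to handle this is to use Proposition \ref{43} to see that the transition map $(z,w,\lambda)\mapsto(\tilde w,\tilde z,\tilde\lambda)=(w,z,\lambda^{-1})$ is a biholomorphism of $W^+\setminus\underline{0}$ onto $W^-\setminus\underline{0}$, so that the two zero sections are the only frontier points not already matched, and then to let the disjointness condition on $B^+,B^-$ do the rest. The transversality already proved in Proposition \ref{trans} (lifted curves meet the zero sections transversally) guarantees that after a further shrinking of $\Sigma$ this finite-fibre collision cannot occur, completing the separation.
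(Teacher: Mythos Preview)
Your argument is correct and follows the same route as the paper: both reduce Hausdorffness to ruling out double points on the boundary of the gluing locus, and both invoke the disjointness hypothesis on $B^+$ and $B^-$ (stated just before Definition~\ref{twist4}) as the key input; the paper's proof is simply a two-sentence version of what you wrote. Your final paragraph, bringing in Proposition~\ref{trans} and a further shrinking of $\Sigma$, is unnecessary---once the disjointness of the image of $B^+\cap(W^+\setminus\underline{0})$ with $B^-$ is assumed, $B^+$ and $B^-$ are already disjoint open neighbourhoods in $Z$ of the two zero sections, and the coordinate relation $\lambda=\tilde\lambda^{-1}$ handles the rest without further restriction.
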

\begin{proof}
Clearly $Z$ is a manifold as it arises as a gluing of two manifolds on an open set. It remains to show that $Z$ is Hausdorff. Note that the condition that the intersection of the image in $\tilde{U}^-|_{c^+_0}$ by the gluing of $B^+\cap (W^+\setminus\underline{0})$ with $B^-$ is empty implies that $B^+$ and $B^-$ are open subsets of $Z$ and that on the boundary of the gluing region in $Z$ there are no double points which finishes the proof.
\end{proof}
\begin{wniosek}\label{Pi}
The gluing of $Z^+$ and $Z^-$ is compatible with the gluing of $T^+$ and $T^-$. Hence, using Remark \ref{projU}, we have a projection $\Pi$ from an open subset of $Z$ (the set of points corresponding to $\tilde{U}^+|_{c_0^-}\setminus \underline{0}$ and $\tilde{U}^-|_{c_0^+}\setminus \underline{0}$) to $T$.
\end{wniosek}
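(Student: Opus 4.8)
The plan is to build $\Pi$ separately on the two halves as fibrewise projectivisation and then check that the two pieces agree on the overlap. On $Z^+$, restricted to $\tilde{U}^+|_{c_0^-}\setminus\underline{0}$, I would take the fibrewise projectivisation $\tilde{U}^+|_{c_0^-}\setminus\underline{0}\to\mathbb{P}(\tilde{U}^+)|_{c_0^-}$, which by Remark \ref{projU} lands in $F^+|_{c_0^-}$, and the latter is identified with $T^+$ by Proposition \ref{31}. Since the connection induced on $\mathbb{P}(\tilde{U}^+)=F^+$ is the projectivisation of $\mathcal{D}$ on $\tilde{U}^+$, horizontal lifts of $C^+$-curves to $\tilde{U}^+$ (whose leaf space is $V^+$) map to horizontal lifts to $F^+$ (whose leaf space is $T^+$); hence this projectivisation descends to a holomorphic map $V^+\to T^+$. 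The same construction on $Z^-$ gives $V^-\to T^-$.

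Next I would verify that these two maps are compatible with the gluings, which is the compatibility claim of the corollary. The cleanest route is through the coordinate descriptions already obtained: by Proposition \ref{43} the gluing of $W^+\setminus\underline{0}$ with $W^-\setminus\underline{0}$ is $z=\tilde{w}$, $w=\tilde{z}$, $\lambda=\tilde{\lambda}^{-1}$, while by Proposition \ref{gluingcoordinates} the gluing of $T^+$ with $T^-$ is $z=\tilde{w}$, $w=\tilde{z}$. In the coordinates of Proposition \ref{38} the projectivisation sends a point to the line it spans, i.e.\ reads off the line coordinates $(z,w)\in T^+$ while forgetting the scale $\lambda$. Thus the two base conditions $z=\tilde{w}$, $w=\tilde{z}$ are exactly the $T$-gluing relation, whereas the remaining relation $\lambda=\tilde{\lambda}^{-1}$ lies entirely in the direction collapsed by $\Pi$. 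Equivalently, and more invariantly, I would note that $\Lam\ot(\Lap)^*$ equals $\Lambda\ot(\Lap)^*\ot(\Lap)^*$, so its fibrewise projectivisation inside $F^+=\mathbb{P}(\tilde{U}^+)$ is $\mathbb{P}(\Lambda)$, and likewise for $\Lap\ot(\Lam)^*$ inside $F^-$; the duality in Definition \ref{twist4} pairs the intersection points of $v^+$ and $v^-$ over one and the same $z\in\Sigma$, so $\Pi(v^+)$ and $\Pi(v^-)$ both meet $\mathbb{P}(\Lambda)$ at $\mathbb{P}(\Lambda)_z$, which is precisely the $T$-gluing relation.

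Having matched the relations, the two partial maps glue to a single holomorphic $\Pi$ on the open subset of $Z$ corresponding to $\tilde{U}^+|_{c_0^-}\setminus\underline{0}$ and $\tilde{U}^-|_{c_0^+}\setminus\underline{0}$, with image in $T=T^+\bigsqcup_{\sim}T^-$. The point needing care, and the only real obstacle, is the domain of definition: projectivisation is undefined along the zero sections, so the tubular cores $B^+,B^-$ of Definition \ref{twist4} must be excluded, which is exactly why the corollary asserts $\Pi$ only on the nonzero part. I would also double-check that the duality $\lambda\mapsto\tilde{\lambda}^{-1}$ genuinely projectivises to the identity on $\mathbb{P}(\Lambda)_z$ — that both one-dimensional fibres $\Lam\ot(\Lap)^*$ and $\Lap\ot(\Lam)^*$ collapse to the same point $\mathbb{P}(\Lambda)_z$ — rather than introducing an unwanted twist; this is immediate once one uses $\Lap\ot(\Lam)^*=(\Lam\ot(\Lap)^*)^*$ together with Remark \ref{projU}.
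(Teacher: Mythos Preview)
Your argument is correct and follows precisely the line the paper intends: the corollary is stated in the paper without an explicit proof, as an immediate consequence of Remark \ref{projU} together with the coordinate descriptions in Propositions \ref{gluingcoordinates} and \ref{43}, and your write-up simply spells out those details (projectivisation forgets $\lambda$, so the $Z$-gluing relations $z=\tilde w$, $w=\tilde z$, $\lambda=\tilde\lambda^{-1}$ descend to the $T$-gluing relations $z=\tilde w$, $w=\tilde z$). One small wording issue: it is only the zero sections that must be removed from the domain of $\Pi$, not the entire tubular neighbourhoods $B^+,B^-$; your phrase ``tubular cores'' is fine if you mean the zero sections, but be careful not to suggest that all of $B^\pm$ is excluded.
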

\section{Construction of a family of lines with the normal bundle $\mathcal{O}(2)$}\label{s5}
\subsection*{Normal bundle to twistor lines}\label{minitwistors}
\begin{definition}
Let $t_{(a,b)}$ be a projective line in $Z$ which is the gluing of a line in $W^+\subset \tilde{U}^+|_{c^-_0}$ given by $(z,w)=(a,b)$ with a line in $W^-\subset \tilde{U}^-|_{c^+_0}$ given by $(\tilde{w},\tilde{z})=(a,b)$.
\end{definition}

\begin{lemma}
The normal bundle $N_{(a,b)}$ to projective lines $t_{(a,b)}$ decomposes into two line subbundles $N_{(a,b)}=N^+_{(a,b)}\oplus N^-_{(a,b)}$.
\end{lemma}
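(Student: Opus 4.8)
The plan is to compute the normal bundle $N_{(a,b)}$ of the line $t_{(a,b)}$ directly in the two coordinate patches and observe that the gluing respects a natural splitting. Recall that $t_{(a,b)}$ is glued from a line in $W^+\subset\tilde{U}^+|_{c_0^-}$ and a line in $W^-\subset\tilde{U}^-|_{c_0^+}$, and that by Proposition \ref{38} we have coordinates $(z,\lambda w,\lambda)$ on $W^+\setminus\underline{0}$ and $(\tilde{\lambda}\tilde{w},\tilde{z},\tilde{\lambda})$ on $W^-\setminus\underline{0}$, glued by $z=\tilde{w}$, $w=\tilde{z}$, $\lambda=\tilde{\lambda}^{-1}$ (Proposition \ref{43}). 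First I would identify the line $t_{(a,b)}$ itself: on the $+$ side it is the fibre direction parametrised by $\lambda$ with $z=a$ fixed and $\lambda w=\lambda b$, and on the $-$ side the fibre direction parametrised by $\tilde{\lambda}=\lambda^{-1}$. Thus $\lambda$ (resp.\ $\tilde{\lambda}$) is the coordinate along the line, and the two remaining coordinates in each patch furnish trivialisations of the normal bundle away from the two endpoints of $\mathbb{CP}^1$.

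The key step is to track how the normal directions transform across the gluing. On the $+$ patch the transverse coordinates are $z$ and the fibre-value $\lambda w$; on the $-$ patch they are $\tilde{z}$ and $\tilde{\lambda}\tilde{w}$. I would take the first-order normal data by differentiating the transition in the directions transverse to the line, holding the line fixed at $(a,b)$. Using $z=\tilde{w}$, $w=\tilde{z}$ and $\lambda=\tilde{\lambda}^{-1}$, a short computation shows that the deformation in the $z$-direction (which at $(a,b)$ lives in the base $\Sigma$ through the curve family $C^+$) matches, across the gluing, to the $\tilde{\lambda}\tilde{w}$-fibre direction on the $-$ side, while the $\lambda w$-fibre direction on the $+$ side matches the $\tilde{z}$-direction on the $-$ side. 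The point is that the two one-parameter families of normal deformations do not mix: each transforms into the other patch only by multiplication by a power of the line-coordinate $\lambda$, never by a combination involving the other normal direction.

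Concretely, I would define $N^+_{(a,b)}$ to be the subbundle spanned near one end by $\partial/\partial(\lambda w)$ (the fibre direction of $\tilde{U}^+$) and near the other end by $\partial/\partial\tilde{z}$, and $N^-_{(a,b)}$ to be spanned by $\partial/\partial z$ and $\partial/\partial(\tilde{\lambda}\tilde{w})$. Checking that each of these is well defined globally along $t_{(a,b)}$ amounts to verifying that the off-diagonal entries of the $2\times2$ Jacobian of the normal transition function vanish, which follows from the product form of the gluing in Proposition \ref{43}: since $\lambda=\tilde{\lambda}^{-1}$ depends only on the line coordinate and $z,w,\tilde{z},\tilde{w}$ transform by the independent identifications $z=\tilde{w}$, $w=\tilde{z}$, the mixed partial derivatives are zero. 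This produces the desired decomposition $N_{(a,b)}=N^+_{(a,b)}\oplus N^-_{(a,b)}$ as a direct sum of line subbundles.

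The main obstacle I anticipate is making the coordinate bookkeeping fully rigorous near the two endpoints of the line, where one or the other patch degenerates (the extension through $\underline{0}$ in Proposition \ref{38} and the fact that the $+$ and $-$ charts each cover only part of $\mathbb{CP}^1$). I would need to confirm that the two local descriptions of each $N^{\pm}_{(a,b)}$ glue to genuine line subbundles over the whole $\mathbb{CP}^1$, rather than merely over the affine piece; this requires checking compatibility on the overlap and that no normal direction blows up at the poles. Once the splitting is established here, the subsequent (presumably separate) computation of the degrees of $N^+_{(a,b)}$ and $N^-_{(a,b)}$ — expected to be $\mathcal{O}(1)$ each, giving total normal bundle $\mathcal{O}(1)\oplus\mathcal{O}(1)$ as required by Theorem \ref{TPenrose} — would follow from reading off the powers of $\lambda$ in each transition function.
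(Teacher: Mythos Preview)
Your approach is essentially the same as the paper's: you define the same two subbundles, just in coordinate language rather than geometrically. The paper identifies $N^+_{(a,b)}$ as the vertical bundle of $\tilde{U}^+|_{c_0^-}$ modulo the tangent to $t_{(a,b)}$ (which in your coordinates is exactly the class of $\partial/\partial(\lambda w)$), and analogously for $N^-_{(a,b)}$; transversality of the two subbundles is then immediate.

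The one place where the paper does something you have not quite carried out is the endpoint issue you flag. Rather than checking transition functions at $\lambda=\infty$ directly, the paper introduces the holomorphic surface $R_{(a,b)}\subset V^-$ consisting of lifted curves meeting $\Lambda^+\otimes(\Lambda^-)^*$ over the $C^+$-curve $z=a$; in your coordinates this is simply $\tilde{w}=a$. Since $t_{(a,b)}\cap\tilde{U}^-|_{c_0^+}\subset R_{(a,b)}$ and $R_{(a,b)}$ passes smoothly through the zero section, one gets $(N^+_{(a,b)})_x=T_xR_{(a,b)}/T_xt_{(a,b)}$ near $\tilde{\lambda}=0$, which is precisely your $\partial/\partial\tilde{z}$ direction. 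This is the clean way to see that your local descriptions glue to a global line subbundle, and it resolves the obstacle you anticipated without any limiting argument.
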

\begin{proof}
Our aim is to show that the vertical bundle of $\tilde{U}^+$ induces a line subbundle $N^+_{(a,b)}$ of $N_{(a,b)}$ and the vertical bundle of  $\tilde{U}^-$ induces a line subbundle $N^-_{(a,b)}$ of $N_{(a,b)}$.

Firstly, note that at any point $x\in \tilde{U}^+\cap t_{(a,b)}$ the tangent space to $t_{(a,b)}$ at $x$ is contained in the vertical bundle of $\tilde{U}^+|_{c^-_0}$. As the vertical bundle has rank $2$, for $x\in \tilde{U}^+\cap t_{(a,b)}$ it defines a $1$-dimensional subspace $(N^+_{(a,b)})_x$ of $(N_{(a,b)})_x$. The only point of $t_{(a,b)}$ which is not an element of $W^+$ is the point lying on the zero section of $\tilde{U}^-$. Denote it by $\infty$. We will show that the bundle  $(N^+_{(a,b)})$ extends over $\infty$ by $$(N^+_{(a,b)})_{\infty}:=T_{\infty}\underline{\infty},$$
where $\underline{\infty}$ denotes the zero section of $\tilde{U}^-|_{c^+_0}$.

To prove that this really defines a holomorphic line bundle over $t_{(a,b)}$, we 
have to construct a local holomorphic description of $N^+_{(a,b)}$ near $\infty$. Consider a subspace $R_{(a,b)}$ of $V^-$ of those lifted curves that intersect $\Lap\ot(\Lam)^*$ over the curve from $C^+$ given by $z=a$. This is a holomorphic submanifold of $V^-$.
As $t_{(a,b)}\cap \tilde{U}^-|_{c^+_0}$ consists of those curves from $V^-$ that intersect $\Lap\ot(\Lam)^*$ over the point $(z=a, \tilde{z}=b)$ we have that 
$t_{(a,b)}\cap \tilde{U}^-|_{c^+_0}\subset R_{(a,b)}$ and we have that 
$$(N^+_{(a,b)})_x=T_xR_{(a,b)}/T_xt_{(a,b)},$$
for $x\in t_{(a,b)}\cap \tilde{U}^-|_{c^+_0}$. 

Analogously we define a subbundle $N^-_{(a,b)}$ as the vertical bundle of  $U^-_p|_{c^+_0}$ and the tangent space to the zero section in  $U^+_p|_{c^-_0}$. By definition, the bundles $N^+_{(a,b)}$ and $N^-_{(a,b)}$ are transversal to each other which completes the proof of the lemma.

\end{proof}

\begin{proposition}\label{propo11}
The normal bundle to projective lines $t_{(a,b)}$ is $\mathcal{O}(1)\oplus\mathcal{O}(1)$.
\end{proposition}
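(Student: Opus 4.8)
The plan is to work in the explicit coordinates of Proposition \ref{38} and Proposition \ref{43} and to compute the transition functions of the two line subbundles $N^+_{(a,b)}$ and $N^-_{(a,b)}$ constructed in the previous lemma. Since that lemma already gives the splitting $N_{(a,b)}=N^+_{(a,b)}\oplus N^-_{(a,b)}$, it suffices to show that each summand has degree one on $t_{(a,b)}\cong\mathbb{CP}^1$.

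First I would write $t_{(a,b)}$ in both charts. On $W^+\setminus\underline{0}$ use coordinates $(z,\lambda w,\lambda)$ and on $W^-\setminus\underline{0}$ use $(\tilde\lambda\tilde w,\tilde z,\tilde\lambda)$. The line is the locus $z=a$, $w=b$ in the first chart and $\tilde w=a$, $\tilde z=b$ in the second; hence it sits in a single fibre of $\tilde U^+$ over $z=a$ (respectively of $\tilde U^-$ over $\tilde z=b$), and the fibre variable $\lambda$, together with $\tilde\lambda=\lambda^{-1}$ from Proposition \ref{43}, provides the two standard affine charts of the resulting $\mathbb{CP}^1$. The point $\lambda=0$ lies on $\underline{0}\subset\tilde U^+|_{c^-_0}$ and the point $\lambda=\infty$ (that is $\tilde\lambda=0$, the point $\infty$ of the lemma) lies on $\underline{0}\subset\tilde U^-|_{c^+_0}$, so the two charts cover the line and overlap exactly on $\lambda\in\mathbb{C}^{\mult}$.

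Next I would introduce transverse coordinates adapted to the splitting. In the first chart write the fibre of $\tilde U^+$ as $(\xi_1,\xi_2)=(\lambda w,\lambda)$, so that $t_{(a,b)}$ is $\{z=a,\ \xi_1=b\xi_2\}$; set $u=\xi_1-b\xi_2$ and $v=z-a$. By the previous lemma $\partial_u$ (the vertical direction of $\tilde U^+$ complementary to the line) spans $N^+_{(a,b)}$, while $\partial_v$ (the direction tangent to $\underline{0}\subset\tilde U^+$) spans $N^-_{(a,b)}$. Symmetrically, in the second chart write the fibre of $\tilde U^-$ as $(\eta_1,\eta_2)=(\tilde\lambda\tilde w,\tilde\lambda)$ and set $\tilde u=\eta_1-a\eta_2$, $\tilde v=\tilde z-b$; then $\partial_{\tilde v}$ spans $N^+_{(a,b)}$ and $\partial_{\tilde u}$ spans $N^-_{(a,b)}$. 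The matching of $w$ with $\tilde z$ and of $z$ with $\tilde w$ is exactly the content of the gluing $z=\tilde w$, $w=\tilde z$ of Proposition \ref{43}.

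It then remains to feed the gluing into these coordinates. Using $\eta_2=\tilde\lambda=\lambda^{-1}=\xi_2^{-1}$, $\eta_1=\tilde\lambda\tilde w=z\,\xi_2^{-1}$ and $\tilde z=w=\xi_1\,\xi_2^{-1}$, a first-order expansion along the line gives
\begin{equation*}
\tilde v=\frac{u}{\xi_2}=\tilde\lambda\,u,\qquad \tilde u=\frac{v}{\xi_2}=\tilde\lambda\,v.
\end{equation*}
Thus both $N^+_{(a,b)}$ and $N^-_{(a,b)}$ have transition function equal, up to a nonvanishing constant, to $\tilde\lambda=\lambda^{-1}$ between the two standard charts of $\mathbb{CP}^1$: a single power of the affine coordinate, so each summand is isomorphic to $\mathcal{O}(1)$. (For comparison, the identical computation applied to $T\mathbb{CP}^1$ produces the transition $\tilde\lambda^2$ of $\mathcal{O}(2)$, which fixes the degree convention.) Hence $N_{(a,b)}=N^+_{(a,b)}\oplus N^-_{(a,b)}\cong\mathcal{O}(1)\oplus\mathcal{O}(1)$. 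I expect the only genuinely delicate point to be the bookkeeping of which transverse direction represents which summand across the two charts near the zero sections $\underline{0}$, where the line meets them; once the identifications $N^+\leftrightarrow(w,\tilde z)$ and $N^-\leftrightarrow(z,\tilde w)$ are fixed through the gluing, the degree falls out of the single factor $\lambda^{-1}$.
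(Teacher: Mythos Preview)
Your argument is correct and runs parallel to the paper's. Both proofs use the splitting $N_{(a,b)}=N^+_{(a,b)}\oplus N^-_{(a,b)}$ from the preceding lemma and the explicit coordinates of Propositions~\ref{38} and~\ref{43}; the only difference is in the last step. You compute the transition cocycle of each summand directly and read off the single factor $\lambda^{-1}$, whereas the paper produces an explicit holomorphic section of $N^+_{(a,b)}$ (the infinitesimal variation $b\mapsto b+t$, giving $(0,\lambda,0)$ in the $(z,\lambda w,\lambda)$ chart and $(0,1,0)$ in the $(\tilde\lambda\tilde w,\tilde z,\tilde\lambda)$ chart), extends it over $\lambda=0$ via Riemann's removable singularities, and observes it has exactly one zero. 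These are the two standard ways of identifying a line bundle on $\mathbb{CP}^1$ as $\mathcal{O}(1)$, and in this instance they are fed by the same data; your transition-function route is slightly cleaner in that it avoids the appeal to removable singularities near $\lambda=0$.
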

\begin{proof}

We will show that $N^+_{(a,b)}\cong\mathcal{O}(1)$. The proof that $N^-_{(a,b)}\cong\mathcal{O}(1)$ is analogous.

We will now construct family of curves such that tangent vectors to them at $t=0$ will form sections of $N^+$.

In the $\tilde{U}^+$ part, using the coordinates $(z,\lambda w, \lambda)$, we define 
$$\gamma_{\lambda}(t)=(a, \lambda (b+t),\lambda,),$$

Those curves glue to curves in the $\tilde{U}^-$ part given in the coordinates $(\tilde{\lambda}\tilde{w},\tilde{z},\tilde{\lambda})$ by 
 $$\gamma_{\tilde{\lambda}}(t)= (\tilde{\lambda} a,b+t,\tilde{\lambda}).$$

A direct calculation of derivatives at $t=0$ gives $\frac{\de}{\de t}\gamma_{\lambda}|_{t=0}=(0,\lambda,0)$ and $\frac{\de}{\de t}\gamma_{\tilde{\lambda}}|_{t=0}=(0,1,0)$.

The local coordinates $(z,\lambda w, \lambda)$ and$(\tilde{\lambda}\tilde{w},\tilde{z},\tilde{\lambda})$ are defined in a neighbourhood of $t_{(a,b)}$ everywhere except from points $\lambda=0$ and $\tilde{\lambda}=0$ hence the family of curves defined above define a holomorphic section, say $u$, of $N^+_{(a,b)}$ for all points of $t_{(a,b)}$ except from $\lambda=0$ and $\tilde{\lambda}=0$.

For points $\tilde{\lambda}=0$, the coordinates in the direction of $N^+$ (i.e., $\tilde{z}$) are also defined in some neighbourhood of $\tilde{\lambda}=0$. Hence the formula for $u$ in coordinates is also valid there and hence $u$ extends holomorphically over $\tilde{\lambda}=0$.

Observe that, as coordinates$(z,\lambda w, \lambda)$ extend continuously (by zero) through $\underline{0}$ (see Proposition \ref{38}), the section $u$ tends to zero when $\lambda$ tends to $0$. By Riemann's Theorem on Removable Singularities, $u$ extends holomorphically through $\lambda=0$ and $u(\lambda=0)=0$. 

Hence we constructed a holomorphic section of $N^+_{(a,b)}$ vanishing at exactly one point. By the Birkhoff–-Grothendieck theorem (\cite{Grot}) $N^+_{(a,b)}\cong\mathcal{O}(1)$.

\end{proof}
\subsection*{Real structure}
Recall that the Cartan connection on $\Sigma$ is a complexification of a Cartan connection on $S$. Let $\theta$ be a real structure on $\Sigma$ coming from complexification (see Definition \ref{tools}). As the Cartan connection $(V,\langle\cdot,\cdot\rangle,\Lambda,\mathcal{D})$ comes from complexification, $\theta$ interchanges $t^+$ and $t^-$ and thus $C^+$ and $C^-$ curves. Moreover, the real structure $\theta$ on $V$ defines also an anti-holomorphic isomorphism between $F^+$ and $F^-$ which induces an anti-holomorphic isomorphism between $T^+$ and $T^-$. For simplification the anti-holomorphic isomorphisms defined above will be also denoted by $\theta$. 

\begin{remark}
Recall that $\Sigma_{\mathbb{R}}$ denotes the real submanifold of $\Sigma$ coming from the complexification. Then for any $z\in\Sigma_{\mathbb{R}}$, the curves $s^+\in T^+$ and $s^-\in T^-$ that intersect $\mathbb{P}(\Lambda)$ over $z$ are interchanged by $\theta$.
\end{remark} 
We can ensure that $T^+$ and $T^-$ are such that the manifold $T$ admits a real structure canonically induced by the complexification of the Cartan geometry. The real structure interchanges projective lines from $T^+$ with projective lines from $T^-$ and on $\Sigma\subset T$ it gives back the real structure $\theta$ from complexification. To simplify the notation we denote the real structure on $T$ by $\theta$.

Analogously, the real structure $\theta$ induces an anti-holomorphic isomorphism $\theta$ between $V^+$ and $V^-$. We define an anti-holomorphic isomorphism between $V^+$ and $V^-$ by $$\theta_Z:=(-\id)\circ\theta,$$
and we can ensure that $V^+$ and $V^-$ are such that $\theta_Z$ is a real structure on $Z$. Observe that for $z$ not in the zero section $\mathbb{P}(\theta_Z(z))=\theta(\mathbb{P}(z))$.

In coordinates $$\theta_Z(z,\lambda w,\lambda)=(-\overline{\lambda}\overline{w},\overline{z}, -\overline{\lambda})$$
 and 
 $$\theta_Z(\tilde{\lambda}\tilde{w},\tilde{z},\tilde{\lambda})=(\overline{\tilde{z}},-\overline{\tilde{\lambda}}\overline{\tilde{w}}, -\overline{\tilde{\lambda}}).$$
\begin{proposition}\label{realCartan5}
The real structure $\theta_Z$ induces on $t_{(a,\bar{a})}$ the antipodal map.
\end{proposition}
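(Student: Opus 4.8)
The plan is to show that $\theta_Z$ restricts to a fixed-point-free anti-holomorphic involution on the projective line $t_{(a,\bar a)}$, and that in a suitable coordinate on $t_{(a,\bar a)}\cong\mathbb{CP}^1$ it takes the standard form $\zeta\mapsto -\bar\zeta^{-1}$, which is precisely the antipodal map. The essential input is the coordinate formula for $\theta_Z$ already recorded just before the statement, so the proof is largely a direct computation once I identify the right affine coordinate along the line.

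**First I would** check that $t_{(a,\bar a)}$ is invariant under $\theta_Z$. Recall that $t_{(a,b)}$ is glued from the line $(z,w)=(a,b)$ in the $\tilde U^+$ part and the line $(\tilde w,\tilde z)=(a,b)$ in the $\tilde U^-$ part, and that $\theta_Z$ interchanges $V^+$ with $V^-$. Using $\theta_Z(z,\lambda w,\lambda)=(-\overline{\lambda}\overline{w},\overline{z},-\overline{\lambda})$, a point on $t_{(a,\bar a)}$ with $w=\bar a$ is sent to a point in the $\tilde U^-$ part whose $\tilde U^-$-coordinates one reads off via the gluing of Proposition~\ref{43} ($z=\tilde w$, $w=\tilde z$, $\lambda=\tilde\lambda^{-1}$). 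The choice $b=\bar a$ is exactly what makes the image land back on $t_{(a,\bar a)}$ rather than on some other line $t_{(a',b')}$; this is why the statement concerns $t_{(a,\bar a)}$ and not a general $t_{(a,b)}$. I would verify this invariance first, since without it the claim is vacuous.

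**Next I would** introduce the natural affine coordinate on the line. Along the $\tilde U^+$ portion of $t_{(a,\bar a)}$ the fibre coordinate $\lambda$ parametrises the line (with the point $\lambda=0$ being the zero section of $\tilde U^+$, i.e.\ one ``pole''), while along the $\tilde U^-$ portion the coordinate $\tilde\lambda$ does the same with $\tilde\lambda=0$ the other pole; the two charts are related by $\lambda=\tilde\lambda^{-1}$, exactly the standard two-chart description of $\mathbb{CP}^1$. Taking $\zeta:=\lambda$ as the coordinate, I would then substitute into the coordinate formula for $\theta_Z$ and simplify using $w=\bar a$, $\tilde z=\bar a$ on this particular line. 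The computation should collapse to $\zeta\mapsto -\bar\zeta^{-1}$ (or possibly $-1/\bar\zeta$ up to the normalisation of $\lambda$), which is the antipodal map of $\mathbb{CP}^1$; the factor $-\id$ built into $\theta_Z=(-\id)\circ\theta$ is precisely what supplies the sign that turns a mere conjugation-inversion into the genuine antipodal involution without fixed points.

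**The main obstacle** I anticipate is bookkeeping rather than conceptual: one must track carefully how the two coordinate charts $(z,\lambda w,\lambda)$ and $(\tilde\lambda\tilde w,\tilde z,\tilde\lambda)$ overlap along $t_{(a,\bar a)}$ and confirm that $\theta_Z$ sends the $\lambda$-chart into the $\tilde\lambda$-chart consistently with the gluing $\lambda=\tilde\lambda^{-1}$, so that the two coordinate formulas for $\theta_Z$ genuinely agree on the overlap and assemble into a single anti-holomorphic map of the abstract $\mathbb{CP}^1$. The delicate point is matching the inversion $\lambda=\tilde\lambda^{-1}$ in the gluing against the conjugation-with-sign in $\theta_Z$; once these are reconciled, recognising the resulting map as the antipodal map $\zeta\mapsto-\bar\zeta^{-1}$ is immediate, and fixed-point-freeness follows since $\zeta=-\bar\zeta^{-1}$ forces $|\zeta|^2=-1$, which has no solution.
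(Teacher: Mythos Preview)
Your proposal is correct and follows essentially the same approach as the paper's proof: both use the affine coordinate $\lambda$ along $t_{(a,\bar a)}$, apply the coordinate formula for $\theta_Z$, pass through the gluing $\lambda=\tilde\lambda^{-1}$ to obtain $\lambda\mapsto -\bar\lambda^{-1}$, and conclude by noting that $\lambda=-\bar\lambda^{-1}$ (equivalently $|\lambda|^2=-1$) has no solutions. The paper's version is terser and handles the two poles (the zero sections in $\tilde U^{\pm}$) in a single sentence at the start, but the content is the same.
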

\begin{proof}
Firstly observe that the real structure interchange the zero in $\tilde{U}^+|_{a}$ with the zero in $\tilde{U}^-|_{\overline{a}}$. Any other point on $t_{(a,\bar{a})}$ can be written using the coordinates $(z,\lambda w,\lambda)$ as $(a,l\overline{a},l)$ for some $l\in\mathbb{C}^{\mult}$.  We have $$\theta_Z(a,l\overline{a},l)=(-\overline{l}a,\overline{a},-\overline{l}),$$
where $(-\overline{l}a,\overline{a},-\overline{l})$ is written using the coordinates $(\tilde{\lambda}\tilde{w},\tilde{z},\tilde{\lambda})$. This corresponds (by gluing) in the coordinates $(z,\lambda w,\lambda)$ to a point
$(a,-\overline{l}^{-1}\overline{a},-\overline{l}^{-1})\in t_{(a,\bar{a})}$ hence the real structure preserves $t_{(a,\bar{a})}$. As the equation $$l=-\overline{l}^{-1}$$ does not have solutions in the complex numbers, the induced real structure on $t_{(a,\bar{a})}$ is antipodal.
\end{proof}

\subsection*{Main theorem}
\begin{theorem}
$Z$ is a twistor space of a self-dual conformal $4$-manifold.
\end{theorem}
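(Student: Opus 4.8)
The plan is to deduce the theorem directly from Theorem \ref{TPenrose} by verifying its two hypotheses for the manifold $Z$ constructed in Section \ref{s4}. First I would record that $Z$ is a complex $3$-manifold: by Definition \ref{twist4} it is glued along an open set from $Z^+$ and $Z^-$, which are open subsets of the total spaces $\tilde{U}^+|_{c^-_0}$ and $\tilde{U}^-|_{c^+_0}$. Since $\tilde{U}^+$ and $\tilde{U}^-$ are rank-$2$ bundles and $c^-_0$, $c^+_0$ are one-dimensional curves, each of these restricted total spaces is three-dimensional, so $\dim_{\mathbb{C}} Z = 3$.

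For hypothesis (i) I would invoke Proposition \ref{propo11}, which shows that each projective line $t_{(a,b)}\subset Z$ has normal bundle $N_{(a,b)}\cong\mathcal{O}(1)\oplus\mathcal{O}(1)$. Because $H^1(\mathbb{CP}^1,\mathcal{O}(1)\oplus\mathcal{O}(1))=0$, Kodaira's deformation theorem guarantees that $t_{(a,b)}$ sits inside an unobstructed complete family of deformations of complex dimension $h^0(\mathbb{CP}^1,\mathcal{O}(1)\oplus\mathcal{O}(1))=4$; the explicit two-parameter family $\{t_{(a,b)}\}$ is a subfamily of this one. Since the splitting type of the normal bundle is upper-semicontinuous, after shrinking the parameter space every member of the complete family still has normal bundle $\mathcal{O}(1)\oplus\mathcal{O}(1)$, which is exactly the family demanded in Theorem \ref{TPenrose}(i).

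For hypothesis (ii) I would use the real structure $\theta_Z$ on $Z$ constructed above. By Proposition \ref{realCartan5}, $\theta_Z$ preserves $t_{(a,\bar{a})}$ and induces the antipodal map on it. In general $\theta_Z$ acts anti-holomorphically on the $4$-dimensional parameter space, and on each invariant line it restricts to an anti-holomorphic involution of $\mathbb{CP}^1$, which is either the fixed-point-free antipodal map or a reflection fixing a circle. As the antipodal case is the one with no fixed points and this is an open condition, the property established on $t_{(a,\bar{a})}$ persists on every nearby invariant line; hence $\theta_Z$ acts antipodally on all $\theta_Z$-invariant members of the family, giving condition (ii). Applying Theorem \ref{TPenrose} then yields that the parameter space of $\theta_Z$-invariant lines is a self-dual conformal $4$-manifold $M$ for which $Z$ is the twistor space.

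The main obstacle I anticipate is the bookkeeping in passing from the explicit two-parameter family $\{t_{(a,b)}\}$ to the full four-dimensional Kodaira family while simultaneously tracking the splitting type of the normal bundle and the fixed locus of $\theta_Z$. In particular one must verify that the invariant lines produced by the deformation argument are precisely the ones covered by Proposition \ref{realCartan5} and its continuity consequence, so that the antipodal condition of Theorem \ref{TPenrose}(ii) is genuinely satisfied on \emph{all} real lines and not merely on the explicit subfamily $t_{(a,\bar{a})}$.
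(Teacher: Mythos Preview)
Your proposal is correct and follows essentially the same route as the paper: verify hypothesis (i) of Theorem \ref{TPenrose} via Proposition \ref{propo11} and hypothesis (ii) via Proposition \ref{realCartan5}, then invoke Theorem \ref{TPenrose}. The extra Kodaira-deformation and continuity arguments you add are sound but are already absorbed into the statement of Theorem \ref{TPenrose} as the paper formulates it, so the paper's own proof simply cites the two propositions and applies the theorem directly.
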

\begin{proof}
We have shown that $Z$ is a complex $3$-manifold with the real structure $\theta_Z$. Moreover, by Proposition \ref{realCartan5}, $t_{(a,\bar{a})}$ are $\theta_Z$-invariant projective lines on $Z$ such that the induced real structure on  $t_{(a,\bar{a})}$ is antipodal. In Proposition \ref{propo11} we have also shown that normal bundle to  $t_{(a,\bar{a})}$ is $\mathcal{O}(1)\oplus\mathcal{O}(1)$ which, by Theorem \ref{TPenrose}, completes the proof.
\end{proof}
\subsection*{Minitwistor lines}
\begin{definition}
Denote by $M$ the self-dual conformal $4$-manifold such that the twistor space of $M$ is $Z$.
\end{definition}
From Kodaira theorem \cite{KK}, there exists a $4$-dimensional complex manifold $M^c$ which is a local moduli space of curves (twistor lines) with the normal bundle $\mathcal{O}(1)\oplus\mathcal{O}(1)$ which comes as deformations of curves $t_{(a,b)}$. 

\begin{remark}\label{transrem}
The fact that the normal bundle to twistor lines is isomorphic to $\mathcal{O}(1)\oplus\mathcal{O}(1)$ implies that the nearby twistor lines intersect each other in at most one point. In particular any nearby twistor lines from the moduli space intersect any $t_{(a,b)}$ in at most one point.
\end{remark}
Recall that $T$ is a complex surface defined in Section \ref{s3} and that in order to show that $T$ is a minitwistor space, we have to prove that $T$ contains a projective line with normal bundle $\mathcal{O}(2)$.
\begin{proposition}
The projection $\Pi$ (see Corollary \ref{Pi}) from an open subset of $Z$ to $T$ is given by a quotient of $Z$ by a local $\mathbb{C}^{\mult}$ action $\circ$.
\end{proposition}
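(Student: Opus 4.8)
The plan is to exhibit the projection $\Pi$ as the quotient map of a natural fibrewise scaling action and then verify it is holomorphic and has the stated stabiliser structure. First I would observe that $Z$ was built from $\tilde{U}^+$ and $\tilde{U}^-$, which are honest vector bundles (tensor products of $U^{\pm}$ with line bundles), whereas $T$ was built from the projectivisations $F^\pm=\mathbb{P}(\tilde{U}^\pm)$ (see Remark~\ref{projU}). The tautological scaling on the fibres of a vector bundle is exactly the $\mathbb{C}^{\mult}$ action whose quotient recovers the projectivisation, so the natural candidate for $\circ$ is fibrewise multiplication by $\mu\in\mathbb{C}^{\mult}$. In the coordinates of Proposition~\ref{43}, I would define
\begin{align*}
\mu\circ(z,\lambda w,\lambda)&:=(z,\mu\lambda w,\mu\lambda),\\
\mu\circ(\tilde{\lambda}\tilde{w},\tilde{z},\tilde{\lambda})&:=(\mu\tilde{\lambda}\tilde{w},\tilde{z},\mu\tilde{\lambda}),
\end{align*}
which is simply multiplication along the fibre direction $\lambda$ (resp.\ $\tilde{\lambda}$) while fixing the point of $\Sigma$ and the curve in $T^{\pm}$ it projects to.

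The second step is to check that this action is well defined on $Z$, i.e.\ that it is compatible with the gluing $z=\tilde{w}$, $w=\tilde{z}$, $\lambda=\tilde{\lambda}^{-1}$ of Proposition~\ref{43}. Here the scaling on the $+$ side by $\mu$ multiplies $\lambda$ by $\mu$, hence must multiply $\tilde{\lambda}=\lambda^{-1}$ by $\mu^{-1}$ on the $-$ side; I would verify that applying $\mu^{-1}$ in the $-$ coordinates produces precisely the glued image, so the two local actions agree on the overlap and patch to a single (local) action $\circ$ on $Z$. Because the scaling only moves points along fibres of $\tilde{U}^{\pm}$ and the zero sections are not in the gluing region, the action is free away from the zero sections but fixes the zero sections pointwise — this is the reason the action is only \emph{local}, and it is why $\Pi$ is defined only on the open subset of $Z$ corresponding to $\tilde{U}^+|_{c_0^-}\setminus\underline{0}$ and $\tilde{U}^-|_{c_0^+}\setminus\underline{0}$, exactly as stated in Corollary~\ref{Pi}.

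The third step is to identify the quotient with $\Pi$. Since $\mathbb{P}(\tilde{U}^+)=F^+$ and $\mathbb{P}(\tilde{U}^-)=F^-$, the orbits of $\circ$ on $\tilde{U}^{\pm}|_{c_0^{\mp}}\setminus\underline{0}$ are precisely the fibres of the blow-down-to-projectivisation map, so passing to orbits recovers $F^+|_{c_0^-}$ and $F^-|_{c_0^+}$, which by Proposition~\ref{31} are $T^+$ and $T^-$. I would then invoke Corollary~\ref{Pi}, where it is already recorded that the gluing of $Z^+$ and $Z^-$ is compatible with the gluing of $T^+$ and $T^-$: this guarantees that the two fibrewise quotients assemble into the single map $\Pi\colon Z\supset(\text{open})\to T$ rather than into two disjoint pieces. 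Holomorphicity of $\circ$ is immediate from the coordinate formulas, so $\Pi$ is the holomorphic quotient by $\circ$.

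The main obstacle I anticipate is not any computation but the bookkeeping around the word \emph{local}: the action genuinely fails to be global because the scaling coordinate $\lambda$ ranges over a bounded neighbourhood (the tubular neighbourhoods $B^\pm$ chosen for Hausdorffness in Definition~\ref{twist4}), so $\mu\circ x$ may leave $Z$ for $|\mu|$ large or small. I would therefore phrase the statement as a local action defined on a neighbourhood of the identity in $\mathbb{C}^{\mult}$, note that it is defined on all of $\mathbb{C}^{\mult}$ precisely on the common domain $\tilde{U}^{\pm}\setminus\underline{0}$ where $\Pi$ lives, and emphasise that away from the zero sections the orbits are exactly the $\Pi$-fibres, which is all that is needed for the quotient interpretation. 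The compatibility of the scaling with the real structure $\theta_Z=(-\id)\circ\theta$ — which sends $\lambda\mapsto-\overline{\lambda}$ and hence conjugates $\mu\circ$ to $\overline{\mu}\circ$ — can be checked directly from the coordinate formulas for $\theta_Z$ and will later give the induced $S^1$ action on $M$.
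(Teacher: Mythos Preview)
Your approach is essentially the paper's: define $\circ$ as fibrewise scalar multiplication on $\tilde{U}^+|_{c_0^-}$ and the \emph{inverse} scalar multiplication on $\tilde{U}^-|_{c_0^+}$, invoke Proposition~\ref{43} for compatibility across the gluing, observe that the zero sections are the fixed locus, and identify the orbit space with the fibrewise projectivisations $F^\pm$, hence with $T$ via Remark~\ref{projU} and Corollary~\ref{Pi}.

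The one slip is in your displayed formula on the $-$ side: you write $\mu\circ(\tilde{\lambda}\tilde{w},\tilde{z},\tilde{\lambda})=(\mu\tilde{\lambda}\tilde{w},\tilde{z},\mu\tilde{\lambda})$, but the gluing $\lambda=\tilde{\lambda}^{-1}$ forces the action on the $-$ side to be by $\mu^{-1}$, i.e.\ $\mu\circ(\tilde{\lambda}\tilde{w},\tilde{z},\tilde{\lambda})=(\mu^{-1}\tilde{\lambda}\tilde{w},\tilde{z},\mu^{-1}\tilde{\lambda})$. You say exactly this in the next paragraph, so the reasoning is right and only the displayed line needs correcting; this is also how the paper defines $\circ$ (scalar multiplication on one side, inverse scalar multiplication on the other). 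With that fix the two local actions genuinely patch, and the rest of your argument goes through unchanged.
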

\begin{proof}
 To see this, note that the projection on the gluing part acts by collapsing projective lines $t_{(a,b)}$ (given in each of the gluing parts by a $1$-dimensional vector spaces) to a point and on parts where we do not have a gluing, it acts by projectivisations of $U^+_p|_{c_0^-}$ and of $\tilde{U}^-|_{c_0^+}$. Define a local $\mathbb{C}^{\mult}$ action $\circ$ as the action which is the scalar multiplication in the fibres of $\tilde{U}^+|_{c^-_0}$ and inverse of the scalar multiplication of  $\tilde{U}^+|_{c^-_0}$. By Proposition \ref{43}, the definition of the $\circ$ is compatible with the gluing, thus the local $\mathbb{C}^{\mult}$ action $\circ$ on $Z$ is well-defined.
 The set of fixed points are the zero sections in $Z^+$ and $Z^-$ and the projection can be described as a gluing of fibrewise projectivisations of $Z^+$ and $Z^-$. 
\end{proof} 
\begin{proposition}
By construction, $\circ$ corresponds to an $S^1$ action on $M$. The $S^1$ action is semi-free, for some neighbourhood in $M$ of the points corresponding to twistor lines $t_{(a,b)}$.
\end{proposition}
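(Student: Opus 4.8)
The plan is to split the statement into two parts: first identify the circle subgroup of $\mathbb{C}^{\mult}$ that acts on $M$, and then prove semi-freeness by a weight computation at the surface of fixed points followed by a local normal form argument.

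For the first part I would exploit the interaction of $\circ$ with the real structure $\theta_Z$. Using the coordinate formulas for $\theta_Z$ together with the description of $\circ$ as fibrewise scalar multiplication on $\tilde{U}^+|_{c^-_0}$ and its inverse on $\tilde{U}^-|_{c^+_0}$, a direct check of the gluing formulas gives the commutation relation
$$\theta_Z(s\circ p)=\overline{s}^{-1}\circ\theta_Z(p),\qquad s\in\mathbb{C}^{\mult}.$$
Since $\theta_Z$ descends to the real structure $\sigma$ on the moduli space $M^c$ whose fixed locus is $M$, the induced holomorphic $\mathbb{C}^{\mult}$-action on $M^c$ obeys the same relation, so for a real point $m\in M$ one has $\sigma(s\circ m)=\overline{s}^{-1}\circ m$. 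Thus $s\circ m$ is again real precisely when $\overline{s}^{-1}\circ m=s\circ m$, i.e. when $(\overline{s}s)\circ m=m$, and asking this for all $m$ near the fixed set forces $|s|=1$. Hence exactly the unit circle $S^1\subset\mathbb{C}^{\mult}$ preserves $M$; as these maps are holomorphic and permute the twistor lines, the resulting $S^1$-action on $M$ is conformal by the Jones--Tod correspondence, which is the first assertion.

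For the fixed locus I would observe from Proposition \ref{43} and the coordinate description that $s\circ$ maps each line $t_{(a,b)}$ to itself, acting along it by $\lambda\mapsto s\lambda$ and fixing precisely the two points where $t_{(a,b)}$ meets the zero sections of $\tilde{U}^+$ and $\tilde{U}^-$ (the fixed locus of $\circ$ identified in the previous proposition). Thus every $t_{(a,b)}$ is a fixed point of the induced action on $M^c$, and by Proposition \ref{realCartan5} the real lines $t_{(a,\overline{a})}$ furnish a surface $S$ of $S^1$-fixed points in $M$, with no isolated fixed points. Semi-freeness near $S$ I would then deduce from the weights of the action on $T_mM^c=H^0\big(t_{(a,\overline{a})},N^+\oplus N^-\big)$: using the explicit sections built in Proposition \ref{propo11} and the coordinate form of $\circ$, the section of $N^+$ vanishing at the zero of $\tilde{U}^+$ is invariant (weight $0$), while a complementary section transforms with weight $+1$; symmetrically $H^0(N^-)$ carries weights $0$ and $-1$. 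Hence the four weights are $\{0,0,+1,-1\}$: the weight-zero plane is tangent to $S$ (equivalently to its complexification) and the two nontrivial normal weights are $\pm1$. For the real action this says $S^1$ rotates the rank-two real normal bundle of $S$ with weight one, so by the equivariant slice theorem a neighbourhood of $S$ in $M$ is modelled on $S\times\mathbb{C}$ with $S^1$ acting by weight-one rotation on the $\mathbb{C}$-factor; such an action is semi-free with $S$ as its full fixed set.

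The main obstacle I anticipate is the weight computation itself, i.e. correctly tracking how $\circ$ acts on $H^0(N^+\oplus N^-)$ through the gluing and the blow-down coordinates of Proposition \ref{38}, and confirming that the nontrivial weight is exactly $\pm1$ rather than a higher multiple; it is precisely this that separates a semi-free action from one with nontrivial finite stabilisers. A secondary point requiring care is the real-structure bookkeeping needed to pin the acting subgroup down to $S^1$ rather than $\mathbb{R}^{\mult}$, and to guarantee that the local $\mathbb{C}^{\mult}$-action integrates to a genuine $S^1$-action on a neighbourhood of $S$.
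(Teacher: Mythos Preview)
Your proposal is correct and reaches the conclusion, but the route differs substantially from the paper's.

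For the first assertion your commutation relation $\theta_Z(s\circ p)=\overline{s}^{-1}\circ\theta_Z(p)$ is a more explicit version of what the paper does tersely: it simply notes that $l\circ$ is a biholomorphism of $Z$ sending twistor lines to twistor lines, and that for $|l|=1$ it commutes with $\theta_Z$ and hence preserves real twistor lines, so $S^1$ acts on $M$. For semi-freeness the paper bypasses the weight computation and the slice theorem altogether in favour of a one-line geometric argument: any nearby twistor line $u$ meets the $\circ$-invariant line $t_{(a,b)}$ in a single point $z$ (Remark~\ref{transrem}), so $s\circ u$ meets $t_{(a,b)}$ in $s\circ z$; when $z$ is not on a zero section, $S^1$ acts freely on it (it is just scalar multiplication on a nonzero vector), whence $s\circ u\neq u$ for $s\neq 1$ and the stabiliser of $u$ is trivial. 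Your approach has the merit of being the standard machinery and of giving the local normal form around $S$ as a byproduct; the paper's argument is shorter and needs nothing beyond the transversality remark, but it does not make the weights explicit and leaves the ``obstacle'' you flag (that the nontrivial weight is exactly $\pm 1$) implicit in the fact that $\circ$ acts on $t_{(a,b)}$ by ordinary scalar multiplication.
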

\begin{proof}
Let $l\in \mathbb{C}^{\mult}$ and $u$ be any twistor line in $Z$. As $\circ$ is invertible, $l\circ u$ is a projective line in $Z$ and it is immediate to write an explicit isomorphism between normal bundles to $u$ and $l\circ u$. Moreover, if $|l|=1$ and $u$ is a real twistor line, we get that $l\circ u$ is a real twistor line. 

Note that all $t_{(a,b)}$ are invariant under $\circ$ hence the corresponding points in $M$ are fixed under the $S^1$ action. If $u$ intersects some $t_{(a,b)}$, then by Remark \ref{transrem}, $u\cap t_{(a,b)}=\{z\}$ is a singleton. For $z$ not in the zero sections of $Z^+$ and $Z^-$ we have that $S^1\circ z\cong S^1$. This implies that the orbit of $u$ is isomorphic to $S^1$. 
\end{proof}
 Remark \ref{transrem} implies that there exists a real twistor line $t$ from $M^c$ which is transversal to $\circ$. Thus, the vector field which is a generator of the action $\circ$ induces a non-vanishing section of the normal bundle of $t$ and thus the rank $1$ trivial subbundle $N^s(t)$ of the normal bundle $N(t) $ of $t$.
 This can be summarised in the following corollary.
\begin{wniosek}
There exists a real twistor line $t$ which is transversal to the action $\circ$. The normal bundle to such a twistor line admits a trivial line subbundle $N^s(t)$.
\end{wniosek}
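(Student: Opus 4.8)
The plan is to produce the line $t$ as a real twistor line corresponding to a point of $M$ lying off the fixed-point surface, and then to read off the trivial subbundle directly from the infinitesimal generator of $\circ$. Write $\xi$ for the holomorphic vector field on (the domain in) $Z$ generating the $\mathbb{C}^{\mult}$ action $\circ$; at each non-fixed point $\xi$ is non-vanishing and tangent to the orbit through that point. The fixed-point set of the induced $S^1$ action on $M$ is the surface $S$, which has real codimension $2$ in $M$; hence $M\setminus S$ is non-empty and each of its points is represented by a real twistor line that is \emph{not} invariant under $\circ$. I would fix such a line and call it $t$.

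The first, and main, step is to check that $t$ is transversal to $\circ$, meaning that $t$ avoids the fixed-point locus of $\circ$ in $Z$ and that $\xi$ is nowhere tangent to $t$. The fixed-point locus of $\circ$ in $Z$ consists of the two zero sections of $\tilde{U}^+|_{c^-_0}$ and $\tilde{U}^-|_{c^+_0}$, each a single complex curve; since $t$ is a complex curve in the $3$-fold $Z$, a generic choice (equivalently, a point of $M\setminus S$ close enough to $S$) misses them, so I may assume $t$ meets no fixed point. For the nowhere-tangency I would argue by contradiction using Remark \ref{transrem}: if $\xi$ were tangent to $t$ at some $x$, then the orbit of $\circ$ through $x$, whose closure is a line $t_{(a,b)}$, would share the direction $\xi_x$ with $t$, so $t$ and $t_{(a,b)}$ would meet at $x$ with intersection multiplicity at least $2$. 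But $t$, being a nearby twistor line from the local moduli space, meets $t_{(a,b)}$ in at most one point by Remark \ref{transrem}, and since both lines carry normal bundle $\mathcal{O}(1)\oplus\mathcal{O}(1)$ (Proposition \ref{propo11}) this single intersection must be transverse. This contradiction shows $\xi$ is nowhere tangent to $t$, establishing transversality.

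It then remains to extract the trivial subbundle. Transversality means that the image $s$ of $\xi|_t$ under the projection $T Z|_t \to N(t)$ is a holomorphic section of the normal bundle that vanishes nowhere on $t$. A nowhere-vanishing holomorphic section spans a holomorphic line subbundle $N^s(t):=\mathcal{O}\cdot s\subset N(t)$, and since $t\cong\mathbb{CP}^1$, a line bundle admitting a nowhere-vanishing holomorphic section is trivial; thus $N^s(t)\cong\mathcal{O}$, as claimed. I expect essentially all the real work to sit in the transversality step, and in particular in pinning down ``transversal'' so that both the avoidance of fixed points and the nowhere-tangency drop out of Remark \ref{transrem}; once $s$ is known to be nowhere zero the triviality of $N^s(t)$ is immediate. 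As a useful byproduct, since $N(t)\cong\mathcal{O}(1)\oplus\mathcal{O}(1)$ the quotient $N(t)/N^s(t)\cong\mathcal{O}(2)$, which is exactly the normal bundle needed for the projected minitwistor line in $T$.
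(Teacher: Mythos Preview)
Your approach is essentially the paper's own: the corollary in the paper is not given a separate proof but is summarized from the sentence immediately preceding it, which says exactly that Remark \ref{transrem} yields a real twistor line transversal to $\circ$, and that the generator of the action then projects to a nowhere-vanishing section of $N(t)$, spanning a trivial line subbundle. You have simply expanded the transversality step with an intersection-multiplicity argument against the orbit closures $t_{(a,b)}$, which is a reasonable way to unpack what the paper leaves implicit.

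Two small points worth tightening. First, your parenthetical ``equivalently, a point of $M\setminus S$'' is not quite an equivalence: a twistor line corresponding to a non-fixed point of $M$ is not $\circ$-invariant, but that alone does not prevent it from meeting the zero sections in $Z$. Genericity (which you also invoke) is what actually guarantees avoidance of the fixed locus, so it is cleaner to drop the ``equivalently''. Second, the claim that a single intersection point between two curves with normal bundle $\mathcal{O}(1)\oplus\mathcal{O}(1)$ must be transverse is not a general fact about such curves; it holds here because $t$ lies in the Kodaira family containing $t_{(a,b)}$, so the relevant infinitesimal deformation is a section of $\mathcal{O}(1)\oplus\mathcal{O}(1)$ with at most a simple zero. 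With those caveats, your argument is correct and aligned with the paper.
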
 
\begin{lemma}\label{lemmaminitwist}
For any twistor line $t$ which is transversal to the action $\circ$, we have that $\Pi(t)$ is a projective line in $T$ with the normal bundle $\mathcal{O}(2)$. Hence $\Pi(t)$ is a minitwistor line. Moreover, if $t$ is a real twistor line, then $\Pi(t)$ is a real minitwistor line and the induced real structure on $\Pi(t)$ is antipodal.
\end{lemma}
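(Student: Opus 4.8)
The plan is to realise this as an instance of the Jones--Tod mechanism: since $\Pi$ is the quotient by the local $\mathbb{C}^{\mult}$ action $\circ$, a twistor line transversal to $\circ$ should map isomorphically to its image, and the normal bundle of the image should be the quotient of $N(t)$ by the direction of the action. Concretely, writing $C:=\Pi(t)$, I would first show that $\Pi|_t\colon t\to C$ is a biholomorphism onto a smooth rational curve, and then show that the differential of $\Pi$ induces an exact sequence
$$0\longrightarrow N^s(t)\longrightarrow N(t)\longrightarrow N(C)\longrightarrow 0$$
of line bundles over $t\cong C$. Since $N(t)\cong\mathcal{O}(1)\oplus\mathcal{O}(1)$ by Proposition~\ref{propo11} and $N^s(t)\cong\mathcal{O}$ is the trivial subbundle furnished by the generator of $\circ$, a degree count forces $N(C)\cong\mathcal{O}(2)$, which together with Theorem~\ref{THitchin} yields the claim.

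For the first step I would argue as follows. By the corollary preceding this lemma the generator $\xi$ of $\circ$ induces a nowhere-vanishing section of $N(t)$; hence $\xi$ is nowhere tangent to $t$ and, in particular, $\xi$ does not vanish along $t$, so $t$ avoids the fixed locus of $\circ$ (the zero sections of $Z^+$ and $Z^-$) and therefore lies in the open set where $\Pi$ is defined. Each fibre of $\Pi$ over this open set is a single $\mathbb{C}^{\mult}$-orbit, whose closure is one of the invariant lines $t_{(a,b)}$; since $t$ is a deformation of a $t_{(a,b)}$, Remark~\ref{transrem} shows that $t$ meets each $t_{(a,b)}$, and hence each fibre of $\Pi$, in at most one point, so $\Pi|_t$ is injective. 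Transversality of $\xi$ to $t$ makes $\Pi|_t$ an immersion, and an injective immersion from the compact curve $t\cong\mathbb{CP}^1$ is an embedding; thus $C=\Pi(t)$ is a smooth rational curve and $\Pi|_t$ is a biholomorphism.

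For the normal bundle I would use that, away from its fixed locus, $\Pi$ is a holomorphic submersion with one-dimensional fibres, so its vertical bundle $\mathcal{V}=\ker d\Pi$ is spanned by $\xi$. Along $t$ the differential $d\Pi\colon TZ|_t\to TT|_C$ is surjective, carries $Tt$ isomorphically onto $TC$ (as $\Pi|_t$ is biholomorphic) and kills $\mathcal{V}$; passing to the quotients $N(t)=TZ|_t/Tt$ and $N(C)=TT|_C/TC$ then gives a surjection $N(t)\to N(C)$ whose kernel is $(Tt+\mathcal{V})/Tt$. Transversality ($Tt\cap\mathcal{V}=0$) identifies this kernel with the image of $\xi$ in $N(t)$, that is, with $N^s(t)$, yielding the displayed exact sequence. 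As $\deg N(t)=2$ and $\deg N^s(t)=0$, the quotient $N(C)$ has degree $2$, and a line bundle of degree $2$ on $\mathbb{CP}^1$ is $\mathcal{O}(2)$; hence $C$ is a minitwistor line.

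Finally, for the reality statement I would invoke that $\Pi$ intertwines $\theta_Z$ with $\theta$, which on fibres is exactly the projectivisation identity $\mathbb{P}(\theta_Z(z))=\theta(\mathbb{P}(z))$ noted above. If $t$ is a real twistor line then $\theta(C)=\theta(\Pi(t))=\Pi(\theta_Z(t))=\Pi(t)=C$, so $C$ is $\theta$-invariant, and since $\Pi|_t$ is a biholomorphism commuting with the real structures, the antipodal involution on $t$ supplied by Proposition~\ref{realCartan5} descends to the antipodal involution on $C$. I expect the main obstacle to be the first step: verifying rigorously that $\Pi|_t$ is a genuine biholomorphism rather than merely a generically one-to-one map, which rests on showing that a transversal $t$ avoids the fixed locus and meets every orbit at most once. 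Once this is secured, the normal-bundle computation is a formal consequence of the submersion structure of $\Pi$.
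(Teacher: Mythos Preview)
Your proposal is correct and follows essentially the same route as the paper: both arguments rest on the short exact sequence $0\to N^s(t)\to N(t)\to N(\Pi(t))\to 0$ and the resulting degree count $\deg N(\Pi(t))=2$, then invoke Birkhoff--Grothendieck. You supply considerably more detail than the paper does---in particular the justification that $\Pi|_t$ is a biholomorphism onto a smooth rational curve and the explicit derivation of the exact sequence from the submersion structure of $\Pi$---whereas the paper simply asserts the sequence and, for the reality statement, appeals directly to Proposition~\ref{realCartan5}.
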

\begin{proof}

  The quotient bundle of $N$ by $N^s$ is the normal bundle $N(\Pi(t))$ to the projective line $\Pi(t)\subset T$. From this, we obtain the following exact sequence:

$$0\longrightarrow N^s \longrightarrow N(t)\longrightarrow N(\Pi(t))\longrightarrow 0.$$
As a consequence, the degrees of the bundles satisfy:

$$\deg N^s+\deg N(\Pi(t))=\deg N(t),$$
thus $\deg N(\Pi(t))=2$ and by the Birkhoff--Grothendieck theorem (\cite{Grot}) $$N(\Pi(t))\cong \mathcal{O}(2).$$

By direct application of Proposition \ref{realCartan5}, we obtain that the real structure on $\Pi(t)$ is antipodal.

\end{proof}
Now we are in position to state the main theorem of the paper.
\begin{theorem}
$T$ is a minitwistor space of an Einstein--Weyl manifold.
\end{theorem}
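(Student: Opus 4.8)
The plan is to verify directly the two hypotheses of Theorem \ref{THitchin} for the surface $T$ constructed in Section \ref{s3}, using the $\mathcal{O}(2)$-lines produced by the projection $\Pi$.

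First I would assemble the family of $\mathcal{O}(2)$-lines required by condition (i). By the Kodaira deformation theory recalled above, the twistor lines $t_{(a,b)}$ deform within the $4$-dimensional moduli space $M^c$, and Remark \ref{transrem} shows that the lines transversal to the action $\circ$ form an open subset of $M^c$. For each such transversal line $t$, Lemma \ref{lemmaminitwist} shows that $\Pi(t)$ is a non-singular projective line in $T$ with normal bundle $\mathcal{O}(2)$. As $t$ ranges over the transversal twistor lines and $\Pi$ is precisely the quotient by the one-dimensional action $\circ$, the images $\Pi(t)$ sweep out a $3$-parameter family of $\mathcal{O}(2)$-curves in $T$, which establishes condition (i).

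Next I would check the reality condition (ii). The surface $T$ carries the real structure $\theta$ induced from the complexification of the Cartan geometry, constructed in the \emph{Real structure} subsection. Starting from a real transversal twistor line $t$, whose existence is guaranteed by the corollary following Proposition \ref{realCartan5}, Lemma \ref{lemmaminitwist} gives that $\Pi(t)$ is a real minitwistor line and that the real structure $\theta$ acts on it as the antipodal map of $\mathbb{CP}^1$. Thus the $\theta$-invariant members of the family carry the antipodal involution, verifying condition (ii). Applying Theorem \ref{THitchin} to $T$ with this family and the real structure $\theta$ then identifies the parameter space of real minitwistor lines as an Einstein--Weyl $3$-manifold for which $T$ is the minitwistor space; this parameter space is exactly the quotient $B = M/S^1$ of the Jones--Tod correspondence.

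The hard part will be confirming that the transversal real twistor lines constitute a genuine $3$-parameter family all of whose members project to non-singular $\mathcal{O}(2)$-lines, i.e.\ that transversality to $\circ$ is an open, non-empty condition stable under real deformation, so that Kodaira's theorem supplies a full moduli space rather than isolated lines. Once this family is secured, the two hypotheses of Theorem \ref{THitchin} follow immediately from Lemma \ref{lemmaminitwist} and Proposition \ref{realCartan5}, and the conclusion is then an application of Hitchin's theorem.
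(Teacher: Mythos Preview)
Your proposal is correct and follows essentially the same route as the paper: invoke Lemma \ref{lemmaminitwist} to obtain a real $\mathcal{O}(2)$-line in $T$ on which $\theta$ acts antipodally, and then apply Theorem \ref{THitchin}. The paper's proof is in fact shorter than yours, since it produces a single such line and leaves the family implicit.

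One remark on what you flag as ``the hard part'': it is not actually needed. Once a single non-singular rational curve $\Pi(t)\subset T$ with normal bundle $\mathcal{O}(2)$ is in hand, Kodaira's deformation theorem applied \emph{directly in $T$} (using $H^1(\mathbb{CP}^1,\mathcal{O}(2))=0$ and $h^0(\mathbb{CP}^1,\mathcal{O}(2))=3$) already yields a complete $3$-parameter family of nearby $\mathcal{O}(2)$-curves, and the real structure $\theta$ on $T$ picks out the real slice. You therefore do not need to verify that transversality to $\circ$ is an open real condition upstairs in $Z$ and push the whole family down through $\Pi$; a single real minitwistor line suffices to trigger Hitchin's theorem. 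This is precisely why the paper's proof is so brief.
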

\begin{proof}
$T$ is a complex surface with a real structure $\theta$. By Lemma \ref{lemmaminitwist}, $T$ admits a projective curve $t$ which is $\theta$-invariant and the induced real structure on $t$ is antipodal. Moreover the normal bundle to $t$ is $\mathcal{O}(2)$ thus by Theorem \ref{THitchin}, $T$ is a minitwistor space of an Einstein--Weyl $3$-manifold.
\end{proof}

\begin{observation}
The projective lines with normal bundle $\mathcal{O}(1)$ arise as quotients of images in $T$ of (possibly restricted) fibres of bundles $U^+$ and $U^-$. The pair of projective lines given by fibres $z=a$ and $\tilde{z}=b$ meet in the point given by $\Pi(t_{(a,b)})$. As those the twistor lines which (by quotients) give minitwistor lines arise as deformations of $t_{(a,b)}$ we can view the line pairs as degenerations of the minitwistor lines.
\end{observation}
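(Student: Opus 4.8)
The statement packages three assertions: (i) the distinguished projective lines in $T$ are the images under the quotient $\Pi$ of the (projectivised) fibres of $U^+$ and $U^-$; (ii) the pair coming from the fibres over $z=a$ and $\tilde z=b$ meets exactly at $\Pi(t_{(a,b)})$; and (iii) these line pairs are the degenerate members of the family of minitwistor lines. The plan is to dispose of (i) and (ii) by unwinding the identifications of Section~\ref{s3} in the coordinates $(z,w)$, $(\tilde w,\tilde z)$ together with the fibre coordinates $\lambda,\tilde\lambda$, and then to treat (iii) as a flat degeneration inside the Kodaira family $M^c$. The essential inputs are the identifications $T^\pm\cong F^\pm|_{c_0^\mp}$ (Proposition~\ref{31}), the equalities $\mathbb{P}(\tilde U^+)=F^+$ and $\mathbb{P}(\tilde U^-)=F^-$ (Remark~\ref{projU}), the coordinate form of the two gluings (Propositions~\ref{gluingcoordinates} and~\ref{43}), and the description of $\Pi$ as the fibrewise $\mathbb{C}\mult$-quotient (Corollary~\ref{Pi}).

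For (i) I would argue that, away from the zero sections, $\Pi$ is the fibrewise projectivisation of $\tilde U^+|_{c_0^-}$ and $\tilde U^-|_{c_0^+}$. Hence the image of a punctured fibre $\tilde U^+_z\setminus\underline 0$ is $\mathbb{P}(\tilde U^+_z)=\mathbb{P}(U^+_z)=F^+_z$, and under the identification $T^+\cong F^+|_{c_0^-}$ of Proposition~\ref{31} this fibre is precisely the line $l^+_z$ of horizontal lifts of the $C^+$-curve through $z$ (every $C^+$-curve meets $c_0^-$ by the $\pm$-convexity of Remark~\ref{remrest}, so nothing is lost in taking $z\in c_0^-$); the symmetric statement gives $l^-_z$ from $U^-$. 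These are exactly the lines whose normal bundle in $T$ was computed in Section~\ref{s3}: being a fibre of the $\mathbb{P}^1$-bundle $F^\pm|_{c_0^\mp}$, each $l^\pm_z$ has normal bundle $\mathcal O$, hence self-intersection $0$ in $T$. For (ii) the computation is coordinate-free once the dictionary is in place: $t_{(a,b)}$ meets $\tilde U^+|_{c_0^-}$ along $\{(z,\lambda w,\lambda)=(a,\lambda b,\lambda)\}$, which $\Pi$ sends to the point $(z,w)=(a,b)$ of $T^+$, and it meets $\tilde U^-|_{c_0^+}$ along $\{(\tilde\lambda\tilde w,\tilde z,\tilde\lambda)=(\tilde\lambda a,b,\tilde\lambda)\}$, which $\Pi$ sends to $(\tilde w,\tilde z)=(a,b)$ of $T^-$. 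By Proposition~\ref{gluingcoordinates} ($z=\tilde w$, $w=\tilde z$) these two points are identified, so $\Pi(t_{(a,b)})$ is a single point lying on both $l^+_a=\{(z,w)=(a,\cdot)\}$ and $l^-_b=\{(\tilde w,\tilde z)=(\cdot,b)\}$; since $\pm$-convexity forces $l^+_a$ and $l^-_b$ to meet in exactly one point, this is their unique intersection.

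For (iii) I would realise the line pair as a flat limit of smooth minitwistor lines. By Lemma~\ref{lemmaminitwist} every minitwistor line is $\Pi(t)$ for a twistor line $t$ transversal to $\circ$, and by Kodaira's theorem \cite{KK} such $t$ are the deformations of $t_{(a,b)}$ parametrised by $M^c$. Choosing a one-parameter family $t_s\to t_{(a,b)}$ of transversal lines, the curves $\Pi(t_s)$ form a family of smooth rational curves of self-intersection $2$ (Lemma~\ref{lemmaminitwist}); self-intersection being constant in a flat family, the limit as $s\to 0$ is again a curve of self-intersection $2$ passing through $\Pi(t_{(a,b)})$. Because $t_{(a,b)}$ is itself $\circ$-invariant and is collapsed by $\Pi$ to the single point $\Pi(t_{(a,b)})$, the relevant object is $\lim_{s\to0}\Pi(t_s)$ rather than $\Pi(\lim_{s\to0}t_s)$; I would show that the portions of $t_s$ lying over $\tilde U^+$ and over $\tilde U^-$ converge to the rulings $l^+_a$ and $l^-_b$ respectively, so that the limit cycle is $l^+_a+l^-_b$. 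The count $(l^+_a)^2+2\,(l^+_a\cdot l^-_b)+(l^-_b)^2=0+2\cdot1+0=2$ matches the self-intersection of the smooth members, confirming that $l^+_a\cup l^-_b$ is a genuine degeneration of the $\mathcal O(2)$-minitwistor lines; this is the precise analogue of the tangent (degenerate) hyperplane sections of the quadric in the motivating Example.

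I expect the only non-formal point to be the rigorous identification of the flat limit $\lim_{s\to0}\Pi(t_s)$ in step (iii). Since $\Pi$ is merely a rational quotient and collapses the invariant fibre $t_{(a,b)}$, one cannot pass to the limit inside $\Pi$; instead one must control how the image of a transversal twistor line crosses the gluing region between $T^+$ and $T^-$ and check that its two halves limit onto $l^+_a$ and $l^-_b$ with no spurious components or multiplicities — equivalently, that the reduced nodal curve $l^+_a\cup l^-_b$ and the smooth minitwistor lines lie in one connected flat family. Once this is settled, the assertions (i) and (ii) follow routinely from the coordinate description of Section~\ref{s3}.
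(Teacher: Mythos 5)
First, note that the paper gives no proof of this Observation: it is stated as a direct reading-off of the preceding results, so your task was to reconstruct the implicit argument, and for parts (i) and (ii) you have done exactly that. The identification $T^\pm\cong F^\pm|_{c_0^\mp}$ of Proposition~\ref{31}, the description of $\Pi$ away from the zero sections as fibrewise projectivisation (Corollary~\ref{Pi}), and the coordinate check via Propositions~\ref{gluingcoordinates} and~\ref{43} that the two halves of $t_{(a,b)}$ project to the single glued point $(z,w)=(a,b)=(\tilde w,\tilde z)$, lying on both fibre lines and being their unique intersection by the $\pm$-convexity of Remark~\ref{remrest}, are precisely the ingredients the author is appealing to. For (iii) you go beyond the paper: the author merely notes that the minitwistor lines are $\Pi(t)$ for $t$ a Kodaira deformation of the $\circ$-invariant line $t_{(a,b)}$, which $\Pi$ collapses to the node, and on that basis ``views'' the pairs as degenerations --- exactly as the tangent hyperplane sections of the quadric in Section~\ref{s2}. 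Your flat-family argument, with the bookkeeping $(l^+_a+l^-_b)^2=0+2\cdot 1+0=2$ and the honest warning that one must control $\lim_{s\to 0}\Pi(t_s)$ rather than $\Pi(\lim_{s\to 0}t_s)$ because $\Pi$ collapses the invariant fibre, is a more precise route than anything in the paper; the point you flag as non-formal is genuinely the only thing separating the paper's informal claim from a theorem, and it is to your credit that you isolated it rather than glossed over it.

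One discrepancy you should not leave silent: your part (i) proves that the component lines have normal bundle $\mathcal{O}$ --- which agrees with the Corollary closing Section~\ref{s3} --- while the statement you are proving says $\mathcal{O}(1)$. On the model quadric every curve has even self-intersection, so no line there carries normal bundle $\mathcal{O}(1)$ in the naive sense; the only consistent reading is the one your part (iii) already supplies implicitly: each line, as a component of the degenerate cycle $C=l^+_a+l^-_b$, carries the normal sheaf $\mathcal{O}_{l^\pm}(C)$ of degree $l^\pm\cdot(l^+_a+l^-_b)=0+1=1$, i.e.\ $\mathcal{O}(1)$, as flatness forces the limit of the $\mathcal{O}(2)$-minitwistor lines to have total degree $2$ on the cycle. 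Your intersection numbers deliver this at no extra cost, but you should state the reconciliation explicitly, since as written your proof establishes a normal bundle different from the one asserted in the statement.
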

Following the result of LeBrun \cite{Le} (see Lemma \ref{lemmaLe}) we obtain the following corollary.
\begin{wniosek}
The Einstein--Weyl manifold obtained from the minitwistor space $T$ is asymptotically hyperbolic.
\end{wniosek}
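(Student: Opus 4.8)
The plan is to verify directly the hypotheses of Lemma \ref{lemmaLe} for the self-dual conformal $4$-manifold $M$ equipped with the $S^1$ action induced by the local $\mathbb{C}^{\mult}$ action $\circ$, and then to invoke that lemma. We have already established that $M$ is self-dual (its twistor space is $Z$), that $\circ$ induces an $S^1$ action on $M$, and that this action is semi-free on a neighbourhood of the points corresponding to the lines $t_{(a,b)}$. Thus three of the five hypotheses of Lemma \ref{lemmaLe} are in place, and it remains to analyse the fixed-point locus.

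First I would identify the fixed-point set of the $S^1$ action. A point $m\in M$ is fixed precisely when the corresponding real twistor line is globally invariant under $l\circ(-)$ for every $l$ with $|l|=1$. By construction the lines $t_{(a,b)}$ are invariant under $\circ$, and by Proposition \ref{realCartan5} the real such lines are exactly the $t_{(a,\bar a)}$, on which $\circ$ restricts to the rotation of $\mathbb{CP}^1$ fixing the two zero-section endpoints. These real invariant lines are parametrised by $a\in\mathbb{C}$, so the fixed-point locus is a real two-dimensional surface $S$; it is non-empty and, being a surface, contains no isolated points. This supplies the two remaining hypotheses of Lemma \ref{lemmaLe}.

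With all hypotheses verified, Lemma \ref{lemmaLe} applies and yields that the Einstein--Weyl structure defined by the Jones--Tod correspondence on $B$, the smooth-point subset of $M/S^1$, is asymptotically hyperbolic. Finally I would note that this Jones--Tod structure is exactly the Einstein--Weyl structure produced in the previous theorem: by the Jones--Tod correspondence (\cite{JT}) the quotient of the twistor space $Z$ by the holomorphic action $\circ$ is the minitwistor space of $M/S^1$, and this quotient is $T$ (Corollary \ref{Pi}). Hence the minitwistor space of the Jones--Tod Einstein--Weyl manifold coincides with $T$, so by Hitchin's theorem the two Einstein--Weyl structures agree, and the manifold obtained from $T$ is asymptotically hyperbolic.

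The main obstacle I expect is the fixed-point analysis: one must check that the fixed locus of the induced $S^1$ action is genuinely a surface of real dimension two (rather than isolated points or a higher-dimensional set), which requires tracking how $\circ$ acts on twistor lines near the zero sections of $Z^+$ and $Z^-$ and confirming via the real structure that the real invariant lines $t_{(a,\bar a)}$ form a two-parameter family. Reconciling the Jones--Tod and Hitchin Einstein--Weyl structures is conceptually routine given the Jones--Tod correspondence, but care is needed to ensure that the identification of $T$ with the minitwistor space of $M/S^1$ is precisely the one underlying Lemma \ref{lemmaLe}.
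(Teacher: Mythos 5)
Your proposal is correct and takes essentially the same route as the paper: the paper's proof of this corollary consists precisely of invoking LeBrun's Lemma \ref{lemmaLe}, with its hypotheses supplied by the preceding propositions (self-duality of $M$, the $S^1$ action induced by $\circ$, its semi-freeness near the points corresponding to the $t_{(a,b)}$, and the fixed surface given by the invariant real lines $t_{(a,\bar{a})}$, which is identified with $S$). Your explicit verification that the fixed locus is the two-parameter family $t_{(a,\bar{a})}$ and your reconciliation of the Jones--Tod structure with the one obtained from $T$ via Corollary \ref{Pi} simply spell out steps the paper leaves implicit.
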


\section{Summary}

We have shown that there is a natural construction of the minitwistor space of an asymptotically hyperbolic Einstein--Weyl manifold $B$ from a real-analytic surface $\Sigma$ equipped with a real-analytic conformal Cartan connection. The minitwistor space contains a family of pairs of projective lines which are deformations of the minitwistor lines and encode the asymptotic end of the Einstein--Weyl manifold. 
Moreover, we have shown that there exists a natural construction of a twistor space $Z$ of the self-dual conformal $4$-manifold $M$ from $\Sigma$ such that $S$ is a submanifold of $M$ and $B$ arises as quotient of $M$ by an $S^1$ action. As a consequence, the asymptotically hyperbolic Einstein--Weyl manifold $B$, constructed from $\Sigma$, admits a distinguished Gauduchon gauge (see \cite{JT}).

The constructed twistor space is the result of a special case of the Generalized Feix--Kaledin construction of quaternionic manifolds obtained recently by D. Calderbank and the author (\cite{BC}, compare \cite{Feix2}, \cite{KAL1}). In \cite{BC} we construct a family of twistor spaces from which $T$ arises as quotient and further investigate the obtained Gauduchon gauges.
\section{Acknowledgements}
The results presented in this paper are part of author's PhD thesis. The author would like to thank Professor David Calderbank for his supervision during the project, suggesting research directions and encouragement.

The research was supported by University of Bath PhD studentship and Department of Mathematics and Computer Science of Jagiellonian University in Krakow.
The author would also like to thank Institute of Mathematics of Jagiellonian University in Krakow for financial support.

\bibliographystyle{amsalpha}

\providecommand{\bysame}{\leavevmode\hbox to3em{\hrulefill}\thinspace}
\providecommand{\MR}{\relax\ifhmode\unskip\space\fi MR }
\providecommand{\MRhref}[2]{%
  \href{http://www.ams.org/mathscinet-getitem?mr=#1}{#2}
}
\providecommand{\href}[2]{#2}

\end{document}